\documentclass{siamltex}
\usepackage{amsmath,amsfonts,amssymb}
\usepackage{latexsym}
\usepackage{graphicx,overpic}
\usepackage{subfigure,caption}
\usepackage{pgfplots}
\pgfplotsset{compat=newest}
\usepackage{color}
\usepackage{booktabs}
\usepackage{caption}
\usepackage{lscape}

\usepackage{tikz}

%

%
%
%
%
%
%
%

\newcommand{\R}{\mathbb R}
\newcommand{\bA}{\mathbf A}
\newcommand{\bB}{\mathbf B}

\newcommand{\bH}{\mathbf H}
\newcommand{\bI}{\mathbf I}

\newcommand{\bP}{\mathbf P}

\newcommand{\bV}{\mathbf V}

\newcommand{\bg}{\mathbf g}
\newcommand{\bn}{\mathbf n}
\newcommand{\be}{\mathbf e}

\newcommand{\bu}{\mathbf u}

\newcommand{\bv}{\mathbf v}
\newcommand{\bw}{\mathbf w}

\newcommand{\bbf}{\mathbf f}

\newcommand{\T}{\mathcal T}

\newcommand{\tr}{{\rm tr}}

\newcommand{\divG}{{\mathop{\,\rm div}}_{\Gamma}}
\newcommand{\gradG}{\nabla_{\Gamma}}

\newcommand{\gradGh}{\nabla_{\Gamma_h}}

\newcommand{\OGamma}{\Omega^\Gamma_h}

\newcommand{\cT}{\mathcal T}

\newtheorem{assumption}{Assumption}[section]
\newtheorem{remark}{Remark}[section]

\numberwithin{equation}{section}
\brokenpenalty=10000

\linespread{1.1}

\setcounter{section}{0}

\begin{document}
\title{Trace Finite Element Methods for Surface Vector-Laplace Equations}
\author{Thomas Jankuhn\thanks{Institut f\"ur Geometrie und Praktische  Mathematik, RWTH-Aachen
University, D-52056 Aachen, Germany (jankuhn@igpm.rwth-aachen.de)} \and
Arnold Reusken\thanks{Institut f\"ur Geometrie und Praktische  Mathematik, RWTH-Aachen
University, D-52056 Aachen, Germany (reusken@igpm.rwth-aachen.de).}
}
\maketitle

\begin{abstract} In this paper  we analyze a class of trace finite element methods (TraceFEM) for the discretization of vector-Laplace equations. A key issue in the finite element discretization of such problems is the treatment of the  constraint that the unknown vector field must be tangential to the surface (``tangent condition''). We study three  different natural techniques for treating the tangent condition, namely a consistent penalty method, a simpler inconsistent penalty method and a Lagrange multiplier method. A main goal of the paper is to present an analysis that reveals important properties of these three different techniques for treating the tangent constraint. A detailed error analysis is presented that takes the approximation of both the geometry of the surface and the solution of the partial differential equation into account. Error bounds in the energy norm are derived that show how the discretization error depends on relevant parameters such as the degree of the polynomials used for the approximation of the solution, the degree of the polynomials used for the approximation of the level set function that characterizes the surface, the penalty parameter and  the degree of the polynomials used for the approximation of Lagrange multiplier.
\end{abstract}
\begin{keywords} 
 vector-Laplace, trace finite element method.
 \end{keywords}
\section{Introduction} 
In recent years there has been a strongly growing interest in the field of modeling and numerical simulation of surface fluids, cf. the papers \cite{arroyo2009,Yavarietal_JNS_2016,Kobaetal_QAM_2017,miura2017singular,Nitschkeetal_arXiv_2018,reuther2015interplay,Jankuhn1}, in which Navier-Stokes type PDEs for (evolving) surfaces with fluidic properties are proposed. Concerning error analysis of numerical methods for surface (Navier-)Stokes equations there  are only very few results available. In \cite{hansbo2016stabilized} an error analysis for a finite element discretization method for surface Darcy equations is presented. First  error analysis results for a finite element discretization method of surface Stokes equations are given in \cite{Olshanskiietal_arXiv_2018}. In that paper a $P1$-$P1$ stabilized trace finite element method is studied and optimal error bounds are derived. 
As far as we know, no other papers on rigorous error analyses of finite element methods for surface (Navier-)Stokes are available in the literature. One crucial point in the development and analysis of finite element methods for surface Stokes equations is the numerical treatment of the  constraint that the flow must be tangential to the surface (``tangent condition''). This constraint also occurs in the class of surface vector-Laplace problems. These problems are easier to handle than the surface Stokes equations because they only contain a velocity unknown and not a presssure variable. Such vector-Laplace problems are a meaningful simplification of the surface Stokes equation for the development and analysis of finite element methods. In two very recent papers \cite{hansbo2016analysis,grossvectorlaplace} finite element methods for the discretization of vector-Laplace problems are analyzed. The topic of the present paper is closely related to the topics treated in \cite{hansbo2016analysis,grossvectorlaplace}. We briefly 
discuss the main results of these two papers. In \cite{hansbo2016analysis} a finite element method based on standard continuous parametric Lagrange elements, in the spirit of the surface finite element method  (SFEM) for scalar surface PDEs, introduced by Dziuk and Elliott \cite{Dziuketal_AN_2013}, is studied. The tangent condition is weakly enforced by a penalization term. Optimal discretization error estimates are derived that take the approximation of both the geometry of the surface and the solution of the partial differential equation into account.  
In \cite{grossvectorlaplace} a different finite element technique, namely the  trace finite element method (TraceFEM) is studied. This TraceFEM has been thoroughly analyzed for \emph{scalar} surface PDEs, cf.  the overview paper \cite{olshanskii2016trace}. In order to satisfy the tangent constraint for vector-Laplace problems,  a Lagrange multiplier approach is proposed and analyzed in \cite{grossvectorlaplace}. Optimal error estimates are derived, which, however, do not take the errors due to the approximation of the geometry of the surface into account.

In this paper we consider the same vector-Laplace problem as in \cite{grossvectorlaplace}, which is similar to the one in \cite{hansbo2016analysis}. We study the TraceFEM and \emph{three} different natural techniques for treating the tangent condition:
\begin{itemize}
 \item A consistent penalty method, which is the same as the one analyzed (for the SFEM) in \cite{hansbo2016analysis}.
 \item An \emph{in}consistent penalty method as introduced in \cite{Jankuhn1}. This method is simpler as the above-mentioned consistent one, because an approximation of the Weingarten map is avoided.
 \item A Lagrange multiplier method as  in \cite{grossvectorlaplace}. 
\end{itemize}
For higher order approximation we use the parametric version of TraceFEM, which,  for scalar surface PDEs, is analyzed in \cite{grande2017higher}. \emph{The main goal of the paper is to present an analysis that reveals important properties of these three different techniques for treating the tangent constraint}. The topics studied in this paper relate to the ones treated in \cite{hansbo2016analysis,grossvectorlaplace} as follows. Different from \cite{hansbo2016analysis}, we study the TraceFEM (instead of SFEM) and we analyze and compare three different techniques for handling the tangent condition. In   \cite{grossvectorlaplace} only the Lagrange multiplier method is treated and errors due to geometry approximation are neglected; in this paper we take geometry errors into account and besides the Lagrange multiplier method we also analyze two penalty methods.

 Since we use TraceFEM, it is necessary to include some stabilization to damp instabilities caused by ``small cuts''. For this we  use the normal derivative volume stabilization, known from the literature \cite{grande2017higher}.  We derive error estimates that  
take the approximation of both the geometry of the surface and the solution of the partial differential equation into account.   The main results of this paper are the discretization error bounds, in the energy norm, given in section~\ref{sectDiscrerror}. These results reveal how the errors depend on relevant parameters $k$, $k_g$, $k_p$, $\eta$, $k_l$. Here  $k$ denotes the degree of the polynomials used for the approximation of the solution, $k_
g$ the degree of the polynomials used for the approximation of the level set function that characterizes the surface, $k_p \geq k_g$ the order of accuracy of the normal vector approximation used in the penalization term (in both penalty methods), $\eta$ the penalty parameter and $k_l$ the degree of the polynomials used for the approximation of the Lagrange parameter (in the third method).
These error bounds lead to several interesting conclusions. For example, for both penalty methods it is necessary to take $k_p \geq k+1$ in order to obtain optimal error bounds. For the SFEM and the consistent penalty method such a result is also derived in \cite{hansbo2016analysis}. For the consistent penalty method one obtains an optimal order error bound of order $\sim  h^k$ if one takes $k_p=k+1$, $k_g=k$ (i.e. isoparametric spaces), $\eta \sim  h^{-2}$. Such an optimal result does not hold for the (simpler) \emph{in}consistent penalty method. Optimal balancing of terms leads to $k_p=k+1$, $k_g=k$, $\eta \sim h^{-(k+1)}$ and an error bound  of order $\sim h^{\frac12(k+1)}$ for the inconsistent penalty method. This bound is optimal (only) for the important case of linear finite elements, i.e., $k=1$. Hence, in that case this simpler method (which avoids approximation of the Weingarten map) may be more attractive than the consistent penalty method. For the Lagrange multiplier method we do not obtain optimal error bounds for the isoparametric 
case $k=k_g$. For $k_g=k+1$ we obtain optimal bounds both for $k_l=k$ and $k_l=k-1$ (if $k\geq 2$). In the latter case one has to take an appropriate scaling for the  parameter used in the normal derivative volume stabilization term (cf. section~\ref{sectDiscrerror} for more explanation).

Results of numerical experiments are presented that illustrate the error behavior predicted by our error analysis. 

In this paper the analysis of the three methods for treating the constraint is done for the TraceFEM. We expect that similar error bounds and conclusions hold if instead of the TraceFEM the SFEM is used. For the consistent penalty method, these results are derived in \cite{hansbo2016analysis}.

As in the paper \cite{hansbo2016analysis} the analysis is rather technical. This is mainly due to the fact that for the vector case we have to derive bounds for the consistency error caused by the geometry approximation. For the TraceFEM such estimates are not available in the literature (in  \cite{hansbo2016analysis} such estimates are derived for the vector analogon of the Dziuk-Elliot SFEM). 

Since the error analysis in the energy norm as presented in this paper is rather long and technical we do not include an $L^2$-error analysis. This will be addressed in future work.
Based on  the results obtained for the vector-Laplace problem we plan to analyze these methods applied to surface Stokes equations. This a topic of current research. 

The remainder of the paper is organized as follows. In section~\ref{Sectcontinuous} we introduce the variational formulation of the surface vector-Laplace problem that we consider, and give three  related formulations (two of penalty type and one based on a Lagrange multiplier) in which the tangent constraint is treated differently. In section~\ref{sectParametric}  we collect properties of a parametric finite element space known from the literature. Based on this space and the three different variational formulations we define corresponding TraceFEM discrete problems in section~\ref{sectFEmethods}. An error analysis of these methods is presented in section~\ref{sectAnalysis}. The structure of this analysis is along the usual lines. We first derive discrete stability results and based on these formulate Strang Lemmas, in which the energy norm of the discretization error is bounded by a sum of an approximation error and a consistency error. Bounds of the approximation error are easy to derive, based on results 
known from the literature. For proving satisfactory bounds for the consistency term we need a longer and tedious analysis. Finally, in section~\ref{sectExperiments} we present results of numerical experiments.

\section{Continuous problem} \label{Sectcontinuous}
We assume that $\Omega \subset \mathbb{R}^3$ is a polygonal domain which contains a connected compact smooth hypersurface $\Gamma$ without boundary. A tubular neighborhood of $\Gamma$ is denoted by
\begin{equation*}
U_\delta := \left\lbrace x \in \mathbb{R}^3 \mid \vert d(x) \vert < \delta \right\rbrace,
\end{equation*}
with $\delta > 0$ and $d$ the signed distance function to $\Gamma$, which we take negative in the interior of $\Gamma$. The surface $\Gamma$ is the zero level of a smooth level set function $\phi \colon U_\delta \to \mathbb{R}$, i.e. $\Gamma = \{ x \in \Omega \mid \phi(x) =0 \}$. This level set function is not necessarily close to a distance function but has the usual properties of a level set function:
\begin{equation*}
\Vert \nabla \phi(x) \Vert \sim 1, \quad \Vert \nabla^2 \phi(x) \Vert \leq c \quad \text{for all } x \in U_\delta.
\end{equation*}
We assume that the level set function   $\phi$  is sufficiently smooth. On $U_\delta$ we define $\bn(x) = \nabla d(x)$, the outward pointing unit normal on $\Gamma$, $\bH(x) = \nabla^2d(x)$, the Weingarten map,  $\bP = \bP(x):= \bI - \bn(x)\bn(x)^T$, the orthogonal projection onto the tangential plane, $p(x) = x - d(x)\bn(x)$, the closest point projection. We assume $\delta>0$ to be sufficiently small such that the decomposition 
\begin{equation*}
x = p(x) + d(x) \bn(x)
\end{equation*}
is unique for all $x \in U_{\delta}$. The constant normal extension for vector functions $\bv \colon \Gamma \to \mathbb{R}^3$ is defined as $\bv^e(x) := \bv(p(x))$, $x \in U_{\delta}$. The extension for scalar functions is defined similarly. Note that on $\Gamma$ we have $\nabla \bw^e = \nabla(\bw \circ p) = \nabla \bw^e \bP$, with $\nabla \bw := (\nabla w_1, \nabla w_2, \nabla w_3)^T \in \mathbb{R}^{3 \times 3}$ for smooth vector functions $\bw \colon U_\delta \to \mathbb{R}^3$. For a scalar function $g \colon U_\delta \to \mathbb{R}$ and a vector function $\bv \colon U_\delta \to \mathbb{R}^3$ the covariant derivatives are defined by
\begin{equation*} \begin{split}
\nabla_{\Gamma} g(x) = \bP(x)\nabla g(x), \quad x \in \Gamma, \qquad
\nabla_{\Gamma} \bv(x) = \bP(x)\nabla \bv(x) \bP(x), \quad x \in \Gamma. 
\end{split}
\end{equation*} 
On $\Gamma$ we consider the surface stress tensor (see \cite{GurtinMurdoch75}) given by
\begin{equation*}
E_s(\bu):= \frac{1}{2} \left( \gradG \bu + \gradG^T \bu \right),
\end{equation*}
with $\gradG^T \bu := (\gradG \bu)^T$. To simplify the notation we write $E=E_s$.  The surface divergence operator for vector-valued functions $\bu \colon \Gamma \to \mathbb{R}^3$ and tensor-valued functions $\bA \colon \Gamma \to \mathbb{R}^{3\times3}$ are defined as
\begin{equation*} \begin{split}
\divG \bu := \textrm{tr} (\gradG \bu),  \qquad
\divG \bA := \left( \divG (\be_1^T\bA), \divG (\be_2^T\bA),\divG (\be_3^T\bA)  \right)^T,
\end{split}
\end{equation*}
with $\be_i$ the $i$th basis vector in $\mathbb{R}^3$. For a given force vector $\bbf \in L^2(\Gamma)^3$, with $\bbf \cdot \bn =0$, we consider the following elliptic \emph{vector Laplace problem}: determine $\bu \colon \Gamma \to \R^3$ with $\bu \cdot \bn = 0$ and 
\begin{equation} \label{eqstrong}
- \bP \divG (E(\bu)) + \bu = \bbf \qquad \text{on } \Gamma.
\end{equation}
We added the zero order term on the left-hand side to avoid technical details related to the kernel of the strain tensor $E$ (the so-called Killing vector fields).
The surface Sobolev space of weakly differentiable vector valued functions is denoted by
\begin{equation} \label{eqdefH1}
\begin{gathered}
\bV:= H^1(\Gamma)^3, \quad \text{with} ~ \Vert \bu \Vert_{H^1(\Gamma)}^2 := \int_{\Gamma} \Vert \bu(s) \Vert_2^2 + \Vert \nabla \bu^e(s) \Vert_2^2 \, ds.
\end{gathered}
\end{equation}
Note that $\|\nabla \bu^e\|_2=\|(\nabla \bu^e)^T\|_2$ and  on $\Gamma$ we have 
\begin{equation} \label{defH1} (\nabla \bu^e)^T=\bP\big(\nabla u_1^e, \nabla u_2^e, \nabla u_3^e \big) = \sum_{i=1}^3 \gradG u_i \be_i^T .
\end{equation}
 Hence, the norm in \eqref{eqdefH1} is a natural extension to vector valued functions of the usual scalar $H^1(\Gamma)$-norm. The corresponding space of tangential vector field is denoted by
\begin{equation*}
\bV_T := \left\lbrace \bu \in \bV \mid \bu \cdot \bn =0 \right\rbrace.
\end{equation*}
A vector $\bu \in \bV$ can be  decomposed into a tangential and a normal part. We use the notation:
\begin{equation*}
\bu = \bP \bu + (\bu \cdot \bn)\bn = \bu_T + u_N\bn.
\end{equation*}
For $\textbf{u}, \textbf{v} \in \bV$ we introduce the bilinear form
\begin{equation*} \begin{split}
a(\textbf{u}, \textbf{v}) := \int_\Gamma E(\bu) : E(\bv) \, ds + \int_\Gamma \textbf{u} \cdot \textbf{v} \, ds.
\end{split}
\end{equation*}
For a given $\bbf \in L^2(\Gamma)^3$ with $\bbf \cdot \bn = 0$ we consider the following weak formulation of \eqref{eqstrong}: determine $\bu = \bu_T \in \bV_T$ such that 
\begin{equation} \tag{C}\label{contform}
a(\bu_T, \bv_T) = (\bbf, \bv_T)_{L^2(\Gamma)} ~~~ \text{for all}~ \bv_T \in \bV_T.
\end{equation} 
The bilinear form $a(\cdot,\cdot)$ is continuous on $\bV_T$. The ellipticity of $a(\cdot,\cdot)$ on $\bV_T$ follows from the following surface Korn inequality, which is derived in \cite{Jankuhn1}. 
\begin{lemma} \label{lemmakorn}
Assume $\Gamma$ is $C^2$ smooth. There exists a constant $c_K > 0$ such that 
\begin{equation*}
\Vert \bu \Vert_{L^2(\Gamma)} + \Vert E(\bu) \Vert_{L^2(\Gamma)} \geq c_K \Vert \bu \Vert_{H^1(\Gamma)} \qquad \text{for all } \bu \in \bV_T.
\end{equation*}
\end{lemma}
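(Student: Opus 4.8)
The plan is to reduce the claim to the classical Korn inequality on flat two-dimensional domains by localizing with a finite atlas of coordinate charts, and then to absorb all curvature contributions into the $\|\bu\|_{L^2(\Gamma)}$ term on the left-hand side. First I would observe that it suffices to control the full covariant gradient, i.e. to prove
\begin{equation*}
\|\gradG \bu\|_{L^2(\Gamma)}^2 \leq C\big(\|E(\bu)\|_{L^2(\Gamma)}^2 + \|\bu\|_{L^2(\Gamma)}^2\big) \qquad \text{for all } \bu \in \bV_T .
\end{equation*}
Indeed, differentiating the tangency relation $\bu^e\cdot\bn=0$ in tangential directions and using $\nabla\bn=\bH$ yields the pointwise identity $\nabla\bu^e = \gradG\bu - \bn\bu^T\bH$ on $\Gamma$, so that $\|\nabla\bu^e\|_{L^2(\Gamma)} \leq \|\gradG\bu\|_{L^2(\Gamma)} + C\|\bu\|_{L^2(\Gamma)}$ because $\bH$ is bounded (here $\Gamma$ being $C^2$ is used). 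Recalling the definition of $\|\cdot\|_{H^1(\Gamma)}$ in \eqref{eqdefH1}, the displayed estimate is then equivalent to the assertion of the lemma.

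Next I would cover the compact surface $\Gamma$ by finitely many $C^2$ coordinate patches with local parametrizations and choose a subordinate smooth partition of unity $\{\chi_i\}$, writing $\gradG\bu=\sum_i\gradG(\chi_i\bu)$ since $\sum_i\gradG\chi_i=0$. On each patch the field $\chi_i\bu$, being tangential, is represented by a genuinely two-component field $v^{(i)}$ in the associated local frame, and the key point is that the \emph{intrinsic} symmetrized covariant derivative coincides, in these coordinates, with the \emph{flat} symmetric gradient $e(v^{(i)})$ up to a bounded zeroth-order term: the discrepancy consists only of Christoffel-symbol contributions $\Gamma^l_{jk}v_l$, which are bounded multiplications since the metric is $C^1$. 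Applying the classical two-dimensional Korn inequality to $v^{(i)}$ on each chart then bounds $\|\nabla v^{(i)}\|_{L^2}$ by $\|e(v^{(i)})\|_{L^2}+\|v^{(i)}\|_{L^2}$, and the product-rule terms $\nabla\chi_i\otimes\bu$ together with the Christoffel terms contribute only multiples of $\|\bu\|_{L^2(\Gamma)}$. Summing over the finite atlas and translating back to the intrinsic gradient (again modulo bounded curvature factors) produces the displayed estimate.

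The step I expect to be the main obstacle is the careful bookkeeping in this localization: one has to verify that the difference between the geometric strain $E(\bu)=\frac12(\gradG\bu+\gradG^T\bu)$, defined through the projections $\bP$, and the flat symmetric gradient of the coordinate components is genuinely a bounded multiplication operator built from the second fundamental form $\bH$ and the Christoffel symbols, so that it can be absorbed into $\|\bu\|_{L^2(\Gamma)}$ rather than into the gradient norm. This is exactly where the $C^2$ regularity of $\Gamma$ enters, since it guarantees boundedness of $\bH$ and of the Christoffel symbols.

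Alternatively, one could argue by contradiction and compactness: a sequence with $\|\bu_n\|_{H^1(\Gamma)}=1$ and vanishing right-hand side has, by the Rellich embedding $H^1(\Gamma)\hookrightarrow L^2(\Gamma)$, an $L^2$-convergent subsequence whose limit vanishes (from $\|\bu_n\|_{L^2(\Gamma)}\to0$). The presence of the $\|\bu\|_{L^2(\Gamma)}$ term on the left is precisely what makes it unnecessary to factor out the finite-dimensional space of Killing fields. Closing that argument, however, still requires the same local flat-Korn estimate to upgrade weak $H^1$-convergence to strong convergence, so the analytic core is unchanged.
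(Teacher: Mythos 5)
The paper does not prove this lemma at all: it is quoted from \cite{Jankuhn1}, so there is no in-paper argument to compare against. Your sketch is the standard (and, as far as I can tell, the cited reference's) route: the reduction identity $\nabla\bu^e=\gradG\bu-\bn\,\bu^T\bH$ on $\Gamma$ for tangential $\bu$ is correct (it follows from $\bn^T\nabla\bu^e=-\bu^T\bH$, obtained by differentiating $\bu^e\cdot\bn=0$, together with $\nabla\bu^e=\nabla\bu^e\bP$ and $\bH\bP=\bH$), and the chart/partition-of-unity reduction to the flat two-dimensional Korn inequality with Christoffel-symbol remainders absorbed into $\Vert\bu\Vert_{L^2(\Gamma)}$ is the accepted proof strategy; since $\chi_i\bu$ is compactly supported in each chart you may even use Korn's first inequality there, which is elementary. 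Your self-assessment is also accurate that the compactness alternative does not bypass the local flat-Korn estimate, so the localization bookkeeping remains the analytic core of any complete write-up.
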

Hence, the weak formulation \eqref{contform} is a well-posed problem. The unique solution is denoted by $\bu^* = \bu_T^*$.

    The weak formulation \eqref{contform} is not very suitable for a finite element discretization, because we would need vector finite element functions that are tangential to $\Gamma$. Obvious alternatives are obtained by allowing general (not necessarily tangential) vector functions $\bu$ and to treat the constraint $\bu \cdot \bn=0$ by either a penalty approach or a Lagrange multiplier. For vector Laplace problems these were considered in the recent papers \cite{hansbo2016analysis,grossvectorlaplace}. Below we introduce two different penalty formulations and a Lagrange multiplier formulation. These formulations are the basis for (higher order) Galerkin finite element methods that are defined in section~\ref{sectFEmethods}. The remainder of the paper is then devoted to an error analysis of these methods.

Define
\begin{gather*}
\bV_* := \left\lbrace \bu \in L^2(\Gamma)^3 \mid \bu_T \in \bV_T, u_N \in L^2(\Gamma) \right\rbrace, 
\quad \Vert  \bu \Vert_{V_*}^2 := \Vert \bu_T \Vert_{H^1(\Gamma)}^2 + \Vert u_N \Vert_{L^2(\Gamma)}^2.
\end{gather*} 
Using the  identity (for $\bu \in \bV$)
\begin{equation} \label{identi}
E(\bu) = E(\bu_T) + u_N \bH
\end{equation}
we introduce, with some abuse of notation, the bilinear form 
\begin{equation} \label{defbla}
a(\textbf{u}, \textbf{v}) := \int_\Gamma (E(\bu_T) + u_N \bH) : (E(\bv_T) + v_N \bH) \, ds + \int_\Gamma \textbf{u} \cdot \textbf{v} \, ds, \quad \bu,\bv \in \bV_*.
\end{equation}
This bilinear form is well-defined and continuous on $\bV_*$. We also define the penalty bilinear form
\[
 k(\bu,\bv) := \eta \int_\Gamma  (\textbf{u} \cdot \textbf{n}) ~ (\textbf{v} \cdot \textbf{n})  \, ds,
\]
with $\eta >0$ a penalty parameter. 
The first  penalty approach that we introduce (also considered in \cite{grossvectorlaplace}) is as follows: for a given $\bbf \in L^2(\Gamma)^3$ with $\bbf \cdot \bn = 0$ determine $\textbf{u} \in \bV_*$, such that
\begin{equation} \tag{P1}\label{projectedcontform0}
 a(\bu, \bv) + k(\bu, \bv) = ( \textbf{f}, \textbf{v} )_{L^2(\Gamma)} \quad \text{for all}~~\textbf{v} \in \bV_*.
\end{equation}
One can easily check that for $\eta$ sufficiently large we have an ellipticity property: there exists a constant $c>0$ such that
\begin{equation} \label{ell1}
 a(\bu, \bu) + k(\bu, \bu) \geq c \|\bu\|_{V_\ast}^2 \quad \text{for all}~~\bu \in V_\ast.
\end{equation}
Furthermore, $a(\cdot,\cdot) +k(\cdot,\cdot)$ is continuous on $V_\ast$. Hence, for $\eta$ sufficiently large the problem \eqref{projectedcontform0} is well-posed. 
 The formulation, however, is \emph{inconsistent}. 
\begin{lemma} \label{lemconsist} Take $\eta$ sufficiently large such that \eqref{ell1} holds. For the unique solution $\bu$ of \eqref{projectedcontform0}  the following holds:
\begin{equation} \label{penerror}
 \|\bu_T - \bu_T^\ast\|_{H^1(\Gamma)} + \Vert u_N \Vert_{L^2(\Gamma)}  \leq c \eta^{-1} \Vert \bbf \Vert_{L^2(\Gamma)}.
\end{equation}
\end{lemma}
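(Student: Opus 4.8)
The plan is to split \eqref{penerror} into a bound on the normal component $u_N$ and a bound on the tangential error $\be_T := \bu_T - \bu_T^\ast \in \bV_T$, and to decouple the two estimates by choosing test functions that exploit the two orthogonality facts built into the problem, namely $\bbf \cdot \bn = 0$ and the vanishing of the penalty term on tangential fields. As a preliminary step I would record the a priori stability bound $\|\bu\|_{V_\ast} \le c\,\|\bbf\|_{L^2(\Gamma)}$, obtained by testing \eqref{projectedcontform0} with $\bv = \bu$, using the ellipticity \eqref{ell1} on the left and Cauchy--Schwarz on the right. In particular this controls $\|\bu_T\|_{H^1(\Gamma)} \le c\,\|\bbf\|_{L^2(\Gamma)}$.

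To bound the normal part I would test \eqref{projectedcontform0} with the purely normal field $\bv = u_N \bn \in \bV_\ast$ (whose tangential part is zero and whose normal component is $u_N$). Since $\bbf \cdot \bn = 0$, the right-hand side vanishes. Evaluating $a(\cdot,\cdot)$ via \eqref{defbla} and $k(\cdot,\cdot)$ directly, the resulting identity reads $(1+\eta)\|u_N\|_{L^2(\Gamma)}^2 + \int_\Gamma u_N^2\,(\bH:\bH)\,ds = -\int_\Gamma u_N\,\bH:E(\bu_T)\,ds$. Dropping the nonnegative Weingarten term $\int_\Gamma u_N^2\,(\bH:\bH)\,ds \ge 0$ on the left and using the boundedness of $\bH$ on the right gives $(1+\eta)\|u_N\|_{L^2(\Gamma)} \le c\,\|E(\bu_T)\|_{L^2(\Gamma)} \le c\,\|\bu_T\|_{H^1(\Gamma)}$. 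Combined with the stability bound this yields the desired $\|u_N\|_{L^2(\Gamma)} \le c\,\eta^{-1}\|\bbf\|_{L^2(\Gamma)}$.

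For the tangential error I would test both \eqref{projectedcontform0} and the consistent problem \eqref{contform} with an arbitrary tangential $\bv_T \in \bV_T$. The key cancellations are that $k(\bu,\bv_T) = \eta\int_\Gamma (\bu\cdot\bn)(\bv_T\cdot\bn)\,ds = 0$ because $\bv_T\cdot\bn = 0$, and that the zero-order term and the $E(\bu_T):E(\bv_T)$ part of $a(\cdot,\cdot)$ agree with the original tangential bilinear form. Using the identity \eqref{identi}, the only surviving extra contribution from the penalty solution is the coupling term $\int_\Gamma u_N\,\bH:E(\bv_T)\,ds$. Subtracting the two equations therefore gives the error identity $\int_\Gamma E(\be_T):E(\bv_T) + \be_T\cdot\bv_T \,ds = -\int_\Gamma u_N\,\bH:E(\bv_T)\,ds$ for all $\bv_T \in \bV_T$. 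Choosing $\bv_T = \be_T$, bounding the left side below by $c\,\|\be_T\|_{H^1(\Gamma)}^2$ via the Korn inequality of Lemma~\ref{lemmakorn}, and the right side above by $c\,\|u_N\|_{L^2(\Gamma)}\|\be_T\|_{H^1(\Gamma)}$, yields $\|\be_T\|_{H^1(\Gamma)} \le c\,\|u_N\|_{L^2(\Gamma)}$. Inserting the bound on $u_N$ and adding the two estimates gives \eqref{penerror}.

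The main conceptual point, rather than a genuine technical obstacle, is the coupling term $\int_\Gamma u_N\,\bH:E(\bv_T)\,ds$: it is precisely the source of the method's inconsistency, arising from $E(\bu) = E(\bu_T) + u_N\bH$, and it propagates the (small) normal component back into the tangential equation through the Weingarten map. The estimate works because this term is controlled by the boundedness of $\bH$ together with the already-established smallness $\|u_N\|_{L^2(\Gamma)} = \cO(\eta^{-1})$. Everything else is routine once the correct test functions have been selected: $u_N\bn$ for the normal estimate, so that the forcing drops out, and tangential $\bv_T$ for the tangential estimate, so that the penalty term drops out.
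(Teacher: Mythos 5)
Your proposal is correct and follows essentially the same route as the paper's proof: the a priori bound $\|\bu\|_{V_\ast}\lesssim\|\bbf\|_{L^2(\Gamma)}$, testing with $\bv=u_N\bn$ (where the forcing drops out since $\bbf\cdot\bn=0$) to get $\|u_N\|_{L^2(\Gamma)}=\mathcal{O}(\eta^{-1})$, and then testing both problems with $\bv_T=\bu_T-\bu_T^\ast$ together with Korn's inequality. The only difference is that you write out the bilinear form explicitly and isolate the coupling term $\int_\Gamma u_N\,\bH:E(\bv_T)\,ds$, whereas the paper bounds $a(\bu,u_N\bn)$ and $a(u_N\bn,\bv_T)$ directly by Cauchy--Schwarz and continuity; this is a presentational, not a substantive, difference.
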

\begin{proof} There exists a constant $\tilde{c}>0$ such that for the solution of problem \eqref{projectedcontform0} we have $\Vert \bu \Vert_{V_*} \leq \tilde{c} \Vert \bbf \Vert_{L^2(\Gamma)}$. Testing problem \eqref{projectedcontform0} with $\bv = u_N \bn$ we obtain $
a(\bu, u_N \bn) + \eta \Vert u_N \Vert_{L^2(\Gamma)}^2 = 0$. 
Using the Cauchy-Schwarz inequality we get
\begin{equation*}
\eta \Vert u_N \Vert_{L^2(\Gamma)}^2 \leq C \Vert \bu \Vert_{V_*} \Vert u_N \Vert_{L^2(\Gamma)} \leq \tilde{C} \Vert \bbf \Vert_{L^2(\Gamma)} \Vert u_N \Vert_{L^2(\Gamma)},
\end{equation*}
i.e., 
\begin{equation} \label{eqpenun}
\Vert u_N \Vert_{L^2(\Gamma)} \leq \tilde{C}  \eta^{-1} \Vert \bbf \Vert_{L^2(\Gamma)}.
\end{equation}
Testing problem \eqref{projectedcontform0} and problem \eqref{contform} with $\bv_T = \bu_T - \bu_T^\ast$ results in
$
a(\bu_T^\ast, \bv_T) - a(\bu, \bv_T) = 0$, and thus 
$
a(\bv_T, \bv_T) = - a(u_N \bn,\bv_T)$.
Using Korn's inequality (Lemma~\ref{lemmakorn}) and continuity  of $a(\cdot,\cdot)$ we get
\begin{equation*}
\Vert \bu_T - \bu_T^\ast \Vert_{H^1(\Gamma)}^2 \leq \hat{C}  \Vert u_N \Vert_{L^2(\Gamma)}\Vert \bu_T - \bu_T^\ast \Vert_{H^1(\Gamma)}.
\end{equation*}
Combining this with \eqref{eqpenun} proves the result \eqref{penerror}.
\end{proof}
 
To obtain a consistent variant of this formulation we introduce the bilinear form $a_T(\cdot,\cdot)$ in which only the tangential components of the arguments play a role:
\begin{equation} \label{bilinearformpenalty} 
a_T(\bu, \bv) := a(\bP\bu,\bP\bv)=a(\bu_T,\bv_T). 
\end{equation}
The corresponding penalty formulation is: for a given $\bbf \in L^2(\Gamma)^3$ with $\bbf \cdot \bn = 0$ determine $\textbf{u} \in \bV_*$, such that
\begin{equation} \tag{P2}\label{projectedcontform} 
a_T(\bu, \bv) + k(\bu, \bv) = ( \textbf{f}, \textbf{v} )_{L^2(\Gamma)} \quad \text{for all}~~\textbf{v} \in \bV_*.
\end{equation}
This formulation is indeed consistent:
\begin{lemma}
Problem \eqref{projectedcontform} is well posed. For the unique solution $\tilde{\bu} = \tilde{\bu}_T \in \bV_*$ of this problem we have $\tilde{\bu}_T = \bu_T^*$.
\end{lemma}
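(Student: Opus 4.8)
The plan is to establish well-posedness first, then prove the consistency claim $\tilde{\bu}_T = \bu_T^*$. For well-posedness I would verify continuity and ellipticity of $a_T(\cdot,\cdot)+k(\cdot,\cdot)$ on $\bV_*$. Continuity is inherited from that of $a(\cdot,\cdot)$ on $\bV_*$. For ellipticity, observe that $a_T(\bu,\bu)+k(\bu,\bu) = a(\bu_T,\bu_T) + \eta\|u_N\|_{L^2(\Gamma)}^2$. The term $a(\bu_T,\bu_T) = \|E(\bu_T)\|_{L^2(\Gamma)}^2 + \|\bu_T\|_{L^2(\Gamma)}^2$ controls $\|\bu_T\|_{H^1(\Gamma)}^2$ by the surface Korn inequality (Lemma~\ref{lemmakorn}), and the penalty term controls $\|u_N\|_{L^2(\Gamma)}^2$ for any $\eta>0$. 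Together these bound $\|\bu\|_{V_*}^2$ from below, so by Lax--Milgram the problem \eqref{projectedcontform} has a unique solution.

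Next I would prove that the solution is purely tangential, i.e. $\tilde u_N = 0$. The key observation is that the bilinear form $a_T(\cdot,\cdot)$ depends only on the tangential parts of its arguments, so testing \eqref{projectedcontform} with $\bv = \tilde u_N \bn$ gives $a_T(\tilde\bu,\tilde u_N\bn) = a(\tilde\bu_T, (\tilde u_N\bn)_T) = a(\tilde\bu_T,\mathbf 0) = 0$, since $(\tilde u_N\bn)_T = \bP(\tilde u_N \bn) = \tilde u_N \bP\bn = \mathbf 0$. The penalty term then yields $\eta\|\tilde u_N\|_{L^2(\Gamma)}^2 = (\bbf,\tilde u_N\bn)_{L^2(\Gamma)} = 0$ because $\bbf\cdot\bn = 0$ by assumption. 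Hence $\tilde u_N = 0$ and $\tilde\bu = \tilde\bu_T$.

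Finally I would identify $\tilde\bu_T$ with $\bu_T^*$. Having shown $\tilde u_N=0$, restrict the test functions in \eqref{projectedcontform} to tangential fields $\bv = \bv_T \in \bV_T$. For such $\bv$ the penalty term vanishes ($\bv\cdot\bn=0$) and $a_T(\tilde\bu,\bv_T) = a(\tilde\bu_T,\bv_T)$, so \eqref{projectedcontform} reduces exactly to $a(\tilde\bu_T,\bv_T) = (\bbf,\bv_T)_{L^2(\Gamma)}$ for all $\bv_T\in\bV_T$, which is precisely the weak formulation \eqref{contform}. By the uniqueness of its solution $\bu_T^*$, we conclude $\tilde\bu_T = \bu_T^*$.

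The only mild subtlety, rather than a genuine obstacle, is the interplay between the two roles of the bilinear form $a$: its restriction via $a_T$ to tangential arguments must agree with the original $a(\cdot,\cdot)$ on $\bV_T$ used in \eqref{contform}, which follows directly from the definition $a_T(\bu,\bv) = a(\bu_T,\bv_T)$ together with the fact that on $\bV_T$ one has $u_N = 0$, so the Weingarten-map contributions in \eqref{defbla} drop out and $a$ coincides with the original strain-based form. The proof is otherwise short, as each step is an immediate consequence of the definitions and the already-established Korn inequality.
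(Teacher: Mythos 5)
Your proof is correct, and it reaches the conclusion by the converse route to the one the paper takes. The paper's proof, after establishing well-posedness exactly as you do (continuity plus Korn-based ellipticity of $a_T(\cdot,\cdot)+k(\cdot,\cdot)$, valid for any $\eta>0$), simply substitutes the known solution $\bu_T^*$ of \eqref{contform} into \eqref{projectedcontform} and checks in one line that it satisfies the equation --- the penalty term vanishes because $\bu_T^*\cdot\bn=0$, and $(\bbf,\bv_T)_{L^2(\Gamma)}=(\bbf,\bv)_{L^2(\Gamma)}$ because $\bbf\cdot\bn=0$ --- so uniqueness does the rest. You instead start from the unknown solution $\tilde\bu$ of \eqref{projectedcontform}, test with $\bv=\tilde u_N\bn$ to force $\tilde u_N=0$ (here the consistency of $a_T$, i.e.\ that it annihilates normal test functions, replaces the $O(\eta^{-1})$ bound of Lemma~\ref{lemconsist} for the inconsistent form), and then restrict to tangential test functions to recover \eqref{contform}. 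Both arguments rest on the same two structural facts ($a_T$ sees only tangential parts; $\bbf\cdot\bn=0$). The paper's version is more economical; yours is slightly longer but more self-explanatory, since it exhibits directly \emph{why} the normal component of the solution vanishes rather than deducing it a posteriori from uniqueness, and it runs in visible parallel to the proof of Lemma~\ref{lemconsist}, making the contrast between the consistent and inconsistent penalty formulations explicit. Your closing remark about the two roles of $a(\cdot,\cdot)$ --- that \eqref{defbla} restricted to $\bV_T$ coincides with the original strain-based form because the $u_N\bH$ contributions drop out --- is exactly the point the paper leaves implicit, and it is worth stating.
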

\begin{proof} Define $A(\bu,\bv)=a_T(\bu, \bv) + k(\bu, \bv)$. We have $|A(\bu,\bv)|\leq c\Vert \bu \Vert_{V_\ast}\Vert \bv \Vert_{V_\ast}$ for all $\bu,\bv \in V_\ast$, and
using Lemma \ref{lemmakorn} it follows that  there is a constant $c_0 > 0$ such that 
\begin{equation*}
 \Vert \bu \Vert_{V_*}^2 = \Vert \bu_T \Vert_{H^1(\Gamma)}^2 + \Vert u_N \Vert_{L^2(\Gamma)}^2  \leq  c_0 A(\bu,\bu) \quad \text{for all}~~ \bu \in V_\ast.
\end{equation*}
Therefore problem \eqref{projectedcontform} is well posed. For the unique solution $\bu_T^* \in \bV_T$ of problem \eqref{contform} we have
\begin{equation*}
A(\bu_T^\ast, \bv)=
a_T(\bu_T^*, \bv) + k(\bu_T^*,\bv) = a(\bu_T^*, \bv_T) = ( \textbf{f}, \textbf{v} )_{L^2(\Gamma)} \quad \text{for all}~~\bv \in \bV_*.
\end{equation*}
Hence, $\bu_T^*$ solves problem \eqref{projectedcontform}. 
\end{proof}

The third formulation that we  consider uses a Lagrange multiplier to ensure that the solution is tangential to $\Gamma$. We use the bilinear form $a(\cdot,\cdot)$ as in \eqref{defbla} and $b(\bu,\mu):=(\bu \cdot \bn, \mu)_{L^2(\Gamma)}$, $\bu \in V_\ast, \mu \in L^2(\Gamma)$. For a given $\bg \in L^2(\Gamma)^3$, which is not necessarily tangential, we introduce the following saddle point problem: determine $(\bu,\lambda) \in \bV_\ast \times L^2(\Gamma)$ such that
\begin{equation} \tag{L}\label{contformlagrange}\begin{aligned}
a(\bu,\bv) + b(\bv,\mu) &=(\mathbf{g},\bv)_{L^2(\Gamma)} &\quad &\text{for all }\bv \in \bV_\ast, \\
 b(\bu,\mu) & = 0 &\qquad &\text{for all }\mu \in L^2(\Gamma).
\end{aligned}
\end{equation}

Well-posedness of this saddle point problem is derived in the following theorem (see \cite{grossvectorlaplace}).
\begin{theorem}
The problem \eqref{contformlagrange} is well-posed. Its unique solution $(\hat{\bu},\lambda) \in \bV_\ast \times L^2(\Gamma)$ has the following properties:
\begin{align}
1.\quad&\hat{\bu} \cdot \bn  =0,\\
2.\quad&\hat{\bu}_T =\bu_T^*,~~  \text{where } \bu_T^* \text{ is the unique solution of \eqref{contform} with } \mathbf{f}:=\bg_T=\bP \bg, \label{hhj} \\
3.\quad&\lambda  = g_N-  \tr \big(E(\hat{\bu}_T) \bH\big),~~ \text{for}~\bg=\bg_T+g_N\bn. \label{charlambda}
\end{align}
\end{theorem}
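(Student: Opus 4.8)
The plan is to cast \eqref{contformlagrange} as a standard saddle point problem and apply the Brezzi theory. First I would record the two continuity bounds: $|a(\bu,\bv)| \le c\,\Vert\bu\Vert_{V_*}\Vert\bv\Vert_{V_*}$ is already noted for the form \eqref{defbla}, and $|b(\bv,\mu)| = |(v_N,\mu)_{L^2(\Gamma)}| \le \Vert\bv\Vert_{V_*}\Vert\mu\Vert_{L^2(\Gamma)}$ is immediate since $\bv\cdot\bn = v_N$. The two Brezzi conditions that then remain are coercivity of $a(\cdot,\cdot)$ on the kernel of $b(\cdot,\cdot)$ and an inf-sup (LBB) condition for $b(\cdot,\cdot)$. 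The kernel is $K := \{\bv\in\bV_\ast : (v_N,\mu)_{L^2(\Gamma)}=0 \text{ for all }\mu\in L^2(\Gamma)\} = \{\bv\in\bV_\ast : v_N=0\}$, which is exactly $\bV_T$. On $K$ the identity \eqref{identi} gives $E(\bv)=E(\bv_T)$, so $a(\bv,\bv) = \Vert E(\bv_T)\Vert_{L^2(\Gamma)}^2 + \Vert\bv_T\Vert_{L^2(\Gamma)}^2$, and coercivity on $K$ is precisely the surface Korn inequality of Lemma~\ref{lemmakorn}.

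For the inf-sup condition the product structure of $b$ supplies an explicit test function: given $\mu\in L^2(\Gamma)$, choose $\bv := \mu\bn\in\bV_\ast$, so that $v_N=\mu$, $\bv_T=0$, and hence $\Vert\bv\Vert_{V_*}=\Vert\mu\Vert_{L^2(\Gamma)}$ while $b(\bv,\mu)=\Vert\mu\Vert_{L^2(\Gamma)}^2$; this gives the inf-sup bound with constant $\beta=1$. With all Brezzi hypotheses verified, \eqref{contformlagrange} has a unique solution $(\hat{\bu},\lambda)\in\bV_\ast\times L^2(\Gamma)$. Property~1 is then read off directly from the second equation: $b(\hat{\bu},\mu)=(\hat u_N,\mu)_{L^2(\Gamma)}=0$ for all $\mu$ forces $\hat u_N=0$, i.e. $\hat{\bu}\cdot\bn=0$ and $\hat{\bu}=\hat{\bu}_T\in\bV_T$. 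For Property~2 I would test the first equation with $\bv=\bv_T\in\bV_T=K$, so that $b(\bv_T,\lambda)=0$ and, using $\hat u_N=0$ together with \eqref{identi}, $a(\hat{\bu},\bv_T)=a(\hat{\bu}_T,\bv_T)$ reduces to the bilinear form of \eqref{contform}; since $(\bg,\bv_T)_{L^2(\Gamma)}=(\bg_T,\bv_T)_{L^2(\Gamma)}$, the field $\hat{\bu}_T$ solves \eqref{contform} with $\mathbf{f}=\bg_T$, and uniqueness gives $\hat{\bu}_T=\bu_T^*$ as in \eqref{hhj}.

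Property~3 is obtained by testing the first equation with purely normal fields $\bv=v_N\bn$ (so $\bv_T=0$), with $v_N\in L^2(\Gamma)$ arbitrary. Using $\hat u_N=0$ and \eqref{identi}, the zero-order term drops and $a(\hat{\bu},v_N\bn)=\int_\Gamma v_N\,\big(E(\hat{\bu}_T):\bH\big)\,ds$; since $E(\hat{\bu}_T)$ is symmetric, $E(\hat{\bu}_T):\bH=\tr\big(E(\hat{\bu}_T)\bH\big)$. Together with $b(v_N\bn,\lambda)=(\lambda,v_N)_{L^2(\Gamma)}$ and $(\bg,v_N\bn)_{L^2(\Gamma)}=(g_N,v_N)_{L^2(\Gamma)}$, the first equation becomes $(\lambda,v_N)_{L^2(\Gamma)} = (g_N-\tr(E(\hat{\bu}_T)\bH),v_N)_{L^2(\Gamma)}$ for all $v_N\in L^2(\Gamma)$, which is exactly \eqref{charlambda}. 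I do not expect a genuine obstacle here: the only substantial analytic ingredient is the coercivity on $K$, handed to us verbatim by Korn's inequality, while the inf-sup bound and the two characterizations follow from the explicit choices $\bv=\mu\bn$ and $\bv=v_N\bn$ that exploit the product structure of $b$. The point requiring the most care is simply the bookkeeping with the decomposition $\bv=\bv_T+v_N\bn$ and the identity \eqref{identi}, making sure the cross terms and the zero-order term are treated correctly when restricting to tangential and to normal test functions.
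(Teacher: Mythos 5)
Your proof is correct. Note that the paper itself does not prove this theorem; it defers to the cited reference \cite{grossvectorlaplace} and only later, in section~5.1, sketches the continuous inf-sup argument as motivation for the discrete one. Your argument is the standard Brezzi verification and matches that sketch exactly: the kernel of $b(\cdot,\cdot)$ is $\bV_T$, where coercivity of $a(\cdot,\cdot)$ in the $\Vert\cdot\Vert_{V_*}$-norm is precisely Lemma~\ref{lemmakorn} combined with \eqref{identi}, and the inf-sup condition follows from the test function $\bv=\mu\bn$, which is the same choice $\hat\bu=\mu\bn$ the paper uses for the continuous problem. The derivations of properties 1--3 by testing with $\mu$, with $\bv_T\in\bV_T$, and with $v_N\bn$ are all correct, including the identification $E((\bw\cdot\bn)\bn)=(\bw\cdot\bn)\bH$ and the use of symmetry to write $E(\hat\bu_T):\bH=\tr\big(E(\hat\bu_T)\bH\big)$.
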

Summarizing, for the given vector Laplace problem \eqref{contform} we have two  alternative consistent formulations, namely \eqref{projectedcontform} (penalty approach) and \eqref{contformlagrange} (Lagrange multiplier), and one inconsistent formulation \eqref{projectedcontform0} (penalty approach). In the following sections we present a detailed analysis of  finite element methods based on these different formulations.  

\section{Parametric finite element space} \label{sectParametric}
For the discretization of the different variational problems \eqref{projectedcontform0}, \eqref{projectedcontform} and \eqref{contformlagrange}  we use a parametric trace finite element approach as in \cite{grande2017higher}. In this section we define the  finite element space used in this method and summarize certain properties, known from the literature, that we need in the error analysis of the finite element methods.

Let $\{ \mathcal{T}_h \}_{h>0}$ be a family of shape regular tetrahedral triangulations of $\Omega$. For simplicity, in the analysis of the method, we assume $\{\T_h\}_{h >0}$ to be quasi-uniform. By $V_h^k$ we denote the standard finite element space of continuous piecewise polynomials of degree $k$. The nodal interpolation operator in $V_h^k$ is denoted by $I^k$. As input for the parametric mapping we need an approximation of $\phi$. We consider geometry approximations whose order of approximation may differ from the order of the polynomials used in the finite element. In other words, the spaces introduced below are not necessarily \emph{iso}parametric.  Let $k_g$ be the geometry approximation order, i.e., the construction of the geometry approximation will be based on a level set function approximation  $\phi_h \in V_h^{k_g}$ that satisfies the error estimate
\begin{equation*}
\max_{T \in \mathcal{T}_h} \vert \phi_h - \phi \vert_{W^{l,\infty}(T\cap U_\delta)} \lesssim h^{k_g+1-l}, \quad 0\leq l \leq k_g+1.  
\end{equation*}
Here, $\vert \cdot \vert_{W^{l,\infty}(T\cap U_\delta)}$ denotes the usual semi-norm on the Sobolev space $W^{l,\infty}(T\cap U_\delta)$ and the constant used in $\lesssim$ depends on $\phi$ but is independent of $h$. The zero level set of the finite element function $\phi_h$ \emph{implicitly} characterizes an approximation of the interface, which, however, is hard to compute for $k_g \geq 2$.  With the piecewise \emph{linear} nodal interpolation of $\phi_h$, which is denoted by $\hat{\phi}_h = I^1\phi_h$, we define the low order geometry approximation:
\begin{equation*}
\Gamma^{\text{lin}} := \{ x \in \Omega \mid \hat{\phi}_h (x) = 0\}.
\end{equation*}
The  tetrahedra $T \in \cT_h$ that have a nonzero intersection with $\Gamma^{\text{lin}}$ are collected in the set denoted by $\T_h^\Gamma$. The domain formed by all tetrahedra in $\T_h^\Gamma$ is denoted by $\OGamma:= \{ x \in T \mid T \in \T_h^\Gamma \}$. Let $\Theta_h^{k_g}\in  (V_h^{k_g})_{\OGamma}^3$ be the mesh transformation of order $k_g$ as defined in \cite{grande2017higher}, cf. Remark~\ref{transfo}.
\begin{remark} \label{transfo} \rm We outline the key idea of the mesh transformation $\Theta_h^{k_g}$. For a detailed description we refer to \cite{grande2017higher}, \cite{lehrenfeld2016high} and \cite{lehrenfeld2017analysis}. 
There exists a unique $ \tilde d \colon \Omega_h^\Gamma \to \mathbb{R}$ defined as follows: $ \tilde d(x)$ is the in absolute value smallest number such that
\begin{equation*}
\phi(x +  \tilde d(x) \nabla \phi(x)) = \hat{\phi}_h(x) \qquad \text{for } x \in \Omega_h^\Gamma.
\end{equation*}
Based on $\tilde d$ we define the mapping
\begin{equation*}
\Psi(x) := x + \tilde d(x) \nabla \phi(x), \qquad x \in \Omega_h^\Gamma,
\end{equation*}
which has the property $\Psi(\Gamma^{\text{lin}}) = \Gamma$. 
To avoid comptations with $\phi$ (which even may not be available) we use a similar construction with $\phi$ replaced by its (finite element) approximation $\phi_h$.
The resulting mapping $\Psi_h$ is not necessarily a finite element function. The  mesh transformation $\Theta_h^{k_g}$ is obtained by projection of $\Psi_h$ into the finite element space $(V_h^{k_g})_{\OGamma}^3$.
\end{remark}

The approximation of $\Gamma$ is defined as
\begin{equation*}
\Gamma_{h}^{k_g} := \Theta_h^{k_g}(\Gamma^{\text{lin}}) = \left\lbrace x \mid \hat{\phi}_h((\Theta_{h}^{k_g})^{-1}(x)) = 0 \right\rbrace.
\end{equation*}
We denote the transformed cut mesh domain by $\Omega^{\Gamma}_\Theta := \Theta_h^{k_g}(\Omega^\Gamma_h)$. We assume that $h$ is small enough such that $\Omega^{\Gamma}_\Theta \subset U_\delta$ holds. We apply to $V_h^{k}$ the transformation $\Theta_h^{k_g}$ resulting in the parametric spaces
\begin{equation*} 
V_{h,\Theta}^{k,k_g} := \left\lbrace v_h \circ (\Theta_h^{k_g})^{-1} \mid v_h \in (V_h^{k})_{\Omega^{\Gamma}_h} \right\rbrace, \quad
\bV_{h,\Theta}^{k,k_g} := (V_{h,\Theta}^{k,k_g})^3.
\end{equation*}
Note that $k_g$ denotes the degree of the polynomials used in the parametric mapping $\Theta_h^{k_g}$, and $k$ the degree of the polynomials used in the finite element space. 
To simplify the notation we delete the superscript $k_g$ and write  $V_{h,\Theta}^{k} = V_{h,\Theta}^{k,k_g}$, $\bV_{h,\Theta}^{k} = \bV_{h,\Theta}^{k,k_g}$, $\Theta_h = \Theta_h^{k_g}$ and $\Gamma_{h} = \Gamma_{h}^{k_g}$.  Here and further in the paper we write $x \lesssim y$ to state that there exists a constant $c>0$, which is independent of the mesh parameter $h$ and the position of $\Gamma$ and $\Gamma_h$ in the background mesh, such that the inequality $x \leq cy$ holds. Similarly for $x\gtrsim y$, and $x \sim y$  means that both $x\lesssim y$ and $x\gtrsim y$ hold.

We recall some known approximation results from the literature \cite{grande2017higher}.  The parametric interpolation $I_{\Theta}^{k} \colon C(\Omega_{\Theta}^{\Gamma}) \to V_{h,\Theta}^{k}$ is defined by $(I_{\Theta}^{k} v)\circ \Theta_h = I^{k}(v\circ \Theta_h)$. We have the following optimal interpolation error bound for $0\leq l \leq {k}+1$:
\begin{equation} \label{eqinterpolationerror}
\Vert v - I_{\Theta}^{k} v \Vert_{H^l(\Theta_h(T))} \lesssim h^{k+1-l} \Vert v  \Vert_{H^{k+1}(\Theta_h(T))} ~~~\text{for all} ~ v \in H^{k+1}(\Theta_h(T)), T \in \mathcal{T}_h.
\end{equation}
We also need the following trace estimate (\cite{Hansbo02}):
\begin{equation} \label{eqtraceestimate}
\Vert v \Vert_{L^2(\Gamma_T)}^2 \lesssim h^{-1} \Vert  v \Vert_{L^2(\Theta_h(T))}^2 + h \Vert  \nabla v \Vert_{L^2(\Theta_h(T))}^2 ~~~ \text{for} ~ v \in H^1(\Theta_h(T)),
\end{equation}
with $\Gamma_T := \Gamma_h \cap \Theta_h(T)$. The Sobolev norms on $\Omega_{\Theta}^{\Gamma}$ of the normal extension $u^e$  can be estimated by the corresponding norms on $\Gamma$ (\cite{reusken2015analysis}):
\begin{equation} \label{lemmasobolevnormsneighborhood}
\Vert D^\mu u^e \Vert_{L^2(\Omega_{\Theta}^{\Gamma})} \lesssim h^{\frac{1}{2}} \Vert u \Vert_{H^m(\Gamma)} \qquad \text{for all} ~ u \in H^m(\Gamma), \, \vert \mu \vert \leq m.
\end{equation}

\begin{lemma} \label{lemmascalarapproximationerror}
For the space $V_{h,\Theta}^{k}$ we have the approximation error estimate
\begin{equation*} \begin{split}
&\min_{v_h \in V_{h,\Theta}^{k}} \left( \Vert v^e - v_h \Vert_{L^2(\Gamma_h)} + h \Vert \nabla( v^e - v_h) \Vert_{L^2(\Gamma_h)} \right) \\
&\leq \Vert v^e - I_{\Theta}^{k} v^e \Vert_{L^2(\Gamma_h)} + h \Vert \nabla( v^e - I_{\Theta}^{k} v^e) \Vert_{L^2(\Gamma_h)} \lesssim h^{k+1} \Vert v \Vert_{H^{k+1}(\Gamma)}
\end{split}
\end{equation*}
for all $v \in H^{k+1}(\Gamma)$.
\end{lemma}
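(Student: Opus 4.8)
The first inequality is immediate, since $I_\Theta^k v^e \in V_{h,\Theta}^k$ is one admissible competitor in the minimization. The entire content is therefore the second (interpolation) bound, and the plan is to pass from the surface $\Gamma_h$ down to the volumetric cut cells, apply the cellwise interpolation estimate \eqref{eqinterpolationerror} there, and finally trade the volume norm of the extension $v^e$ for a surface norm of $v$ via \eqref{lemmasobolevnormsneighborhood}. Writing $w := v^e - I_\Theta^k v^e$ and recalling $\Gamma_T = \Gamma_h \cap \Theta_h(T)$, I would use the cellwise decompositions $\|w\|_{L^2(\Gamma_h)}^2 = \sum_{T \in \T_h^\Gamma} \|w\|_{L^2(\Gamma_T)}^2$ and likewise for $\nabla w$.

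First I would apply the trace estimate \eqref{eqtraceestimate} to $w$ and, componentwise, to $\nabla w$ (the latter requiring $w \in H^2(\Theta_h(T))$, which holds since $v^e \in H^{k+1}$ with $k \geq 1$), obtaining on each cell
\begin{equation*}
\|w\|_{L^2(\Gamma_T)}^2 + h^2 \|\nabla w\|_{L^2(\Gamma_T)}^2 \lesssim h^{-1}\|w\|_{L^2(\Theta_h(T))}^2 + h\|\nabla w\|_{L^2(\Theta_h(T))}^2 + h^3 \|\nabla^2 w\|_{L^2(\Theta_h(T))}^2.
\end{equation*}
Next I would insert \eqref{eqinterpolationerror} with $l = 0,1,2$, giving $\|w\|_{L^2(\Theta_h(T))} \lesssim h^{k+1}\|v^e\|_{H^{k+1}(\Theta_h(T))}$, $\|\nabla w\|_{L^2(\Theta_h(T))} \lesssim h^{k}\|v^e\|_{H^{k+1}(\Theta_h(T))}$ and $\|\nabla^2 w\|_{L^2(\Theta_h(T))} \lesssim h^{k-1}\|v^e\|_{H^{k+1}(\Theta_h(T))}$. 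A short computation shows all three resulting terms carry the common factor $h^{2k+1}$, so that summing over $T \in \T_h^\Gamma$ yields $\|w\|_{L^2(\Gamma_h)}^2 + h^2\|\nabla w\|_{L^2(\Gamma_h)}^2 \lesssim h^{2k+1}\|v^e\|_{H^{k+1}(\Omega^\Gamma_\Theta)}^2$. Finally I would apply \eqref{lemmasobolevnormsneighborhood} with $m = k+1$, which contributes an extra factor $h$ (i.e.\ $\|v^e\|_{H^{k+1}(\Omega^\Gamma_\Theta)}^2 \lesssim h\|v\|_{H^{k+1}(\Gamma)}^2$), and take square roots to reach the claimed order $h^{k+1}$.

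The estimate is in essence a careful bookkeeping of powers of $h$, and the only genuine subtlety is that the apparently lossy factor $h^{-1}$ in the trace inequality is \emph{exactly} compensated by the factor $h$ coming from the extension estimate \eqref{lemmasobolevnormsneighborhood} (both reflecting the $O(h)$ width of the strip $\Omega^\Gamma_\Theta$); it is this cancellation that restores the optimal rate. The main point requiring attention is thus to keep the three trace weights ($h^{-1}, h, h^3$) aligned with the three interpolation orders ($h^{k+1}, h^{k}, h^{k-1}$) so that they collapse to a single power, together with the observation that the $\nabla^2 w$ term is admissible only because $v^e \in H^{k+1}$ with $k \geq 1$.
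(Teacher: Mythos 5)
Your proposal is correct and follows exactly the route the paper intends: the paper's proof is a one-line reference to ``standard arguments'' based on \eqref{eqinterpolationerror}, \eqref{eqtraceestimate} and \eqref{lemmasobolevnormsneighborhood}, and you have filled in precisely that argument, with the powers of $h$ correctly aligned so that each of the three terms carries the common factor $h^{2k+1}$ and the extra $h$ from the extension estimate restores the optimal rate $h^{k+1}$. The two side remarks you flag (the need for $k\geq 1$ to apply the trace inequality to $\nabla w$, and the $O(h)$-strip cancellation) are exactly the right points of care.
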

\begin{proof} The proof uses standard arguments, based on \eqref{eqinterpolationerror}, \eqref{eqtraceestimate} and \eqref{lemmascalarapproximationerror}, cf. \cite{grande2017higher}.
\end{proof}

The following lemma, taken from \cite{grande2017higher}, gives an approximation error for the easy to compute normal approximation $\bn_h$, which is used in the methods introduced below.
\begin{lemma} \label{lemmanormals}
For $x \in T \in \mathcal{T}^\Gamma_h$ define
\begin{equation*}
\bn_{\textrm{lin}} = \bn_{\textrm{lin}}(T) := \frac{\nabla \hat{\phi}_h(x)}{\Vert \nabla \hat{\phi}_h(x)\Vert_2} = \frac{\nabla \hat{\phi}_{h|T}}{\Vert \nabla \hat{\phi}_{h|T}\Vert_2}, \quad \bn_h (\Theta(x)) := \frac{D\Theta_h(x)^{-T} \bn_{\textrm{lin}}}{\Vert D\Theta_h(x)^{-T} \bn_{\textrm{lin}} \Vert_2}.
\end{equation*}
Let $\bn_{\Gamma_h}(x)$, $x \in \Gamma_h$ a.e., be the unit normal on $\Gamma_h$ (in the direction of $\phi_h >0$). The following holds:
\begin{equation*} \begin{split}
\Vert \bn_h - \bn \Vert_{L^\infty(\Omega_\Theta^\Gamma)} \lesssim h^{k_g}, \qquad
\Vert \bn_{\Gamma_h} - \bn \Vert_{L^\infty(\Gamma_h)} \lesssim h^{k_g}.
\end{split}
\end{equation*}
\end{lemma}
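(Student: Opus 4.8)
The plan is to reduce the second estimate to the first and then prove the first by comparing the finite element mesh transformation $\Theta_h$ with the exact mapping $\Psi$ from Remark~\ref{transfo}. The starting observation is the transformation rule for normals: the conormal transforms with the inverse transpose of the Jacobian. Since $\Gamma_h = \Theta_h(\Gamma^{\text{lin}})$ with $\Gamma^{\text{lin}} = \{\hat{\phi}_h = 0\}$, we have $\Gamma_h = \{\hat{\phi}_h \circ \Theta_h^{-1} = 0\}$, so the exact unit normal of $\Gamma_h$ at $y = \Theta_h(x)$ is proportional to $\nabla(\hat{\phi}_h\circ\Theta_h^{-1})(y) = D\Theta_h(x)^{-T}\nabla\hat{\phi}_h(x)$, i.e. to $D\Theta_h(x)^{-T}\bn_{\text{lin}}$. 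After normalization this is exactly $\bn_h(\Theta_h(x))$, and the orientations match by the sign convention; hence $\bn_{\Gamma_h} = \bn_h$ on $\Gamma_h$. Because $\Gamma_h \subset \Omega^{\Gamma}_\Theta$, the second estimate is then an immediate consequence of the first, so the whole proof reduces to bounding $\Vert \bn_h - \bn \Vert_{L^\infty(\Omega^{\Gamma}_\Theta)}$.

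For this first estimate I would exploit the ideal map $\Psi$, which by construction satisfies $\phi\circ\Psi = \hat{\phi}_h$ and therefore carries each level set $\{\hat{\phi}_h = c\}$ onto $\{\phi = c\}$, with $\Psi(\Gamma^{\text{lin}}) = \Gamma$. Applying the same conormal rule to $\Psi$ shows that $D\Psi^{-T}\bn_{\text{lin}}$, once normalized, is (up to sign) $\nabla\phi/\Vert\nabla\phi\Vert$ evaluated at $\Psi$, which coincides with $\bn$ on $\Gamma$ and, by the smoothness of $\phi$, matches $\bn$ to the required order on the thin strip $\Omega^{\Gamma}_\Theta$. I would then split $\bn_h - \bn$ through this intermediate field and estimate the two contributions separately. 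The difference caused by replacing $D\Psi$ with $D\Theta_h$ is controlled, via the local Lipschitz continuity of the map $M\mapsto M^{-T}\bn_{\text{lin}}/\Vert M^{-T}\bn_{\text{lin}}\Vert$ (uniform because $D\Theta_h$ and $D\Psi$ are uniformly bounded with uniformly bounded inverses for $h$ small), by $\Vert D\Theta_h - D\Psi\Vert_{L^\infty(\OGamma)}$; the difference caused by evaluating $\bn$ at $\Psi$ rather than at $\Theta_h$ is controlled, via the smoothness of $\bn$, by $\Vert \Theta_h - \Psi\Vert_{L^\infty(\OGamma)}$.

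The main obstacle is thus to establish the two approximation bounds $\Vert \Theta_h - \Psi\Vert_{L^\infty(\OGamma)} \lesssim h^{k_g+1}$ and $\Vert D(\Theta_h - \Psi)\Vert_{L^\infty(\OGamma)} \lesssim h^{k_g}$ for the mesh transformation, together with the uniform invertibility of $D\Theta_h$. These are precisely the properties of $\Theta_h$ analysed in \cite{grande2017higher,lehrenfeld2016high,lehrenfeld2017analysis}: they follow from the level set error estimate $\max_T |\phi_h - \phi|_{W^{l,\infty}} \lesssim h^{k_g+1-l}$, the definition of $\Theta_h$ as the finite element projection of $\Psi_h$, and standard interpolation estimates. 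I would invoke them from those references rather than redo the lengthy construction. Granting these bounds, each of the two contributions above is $O(h^{k_g})$, which yields the asserted estimate.
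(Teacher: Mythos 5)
The paper does not actually prove this lemma --- it is imported verbatim from \cite{grande2017higher} (``The following lemma, taken from \cite{grande2017higher}, \dots'') --- so your proposal can only be measured against the standard argument in that reference. Your overall architecture matches it: the identification $\bn_{\Gamma_h}=\bn_h$ on $\Gamma_h$ via the conormal transformation rule is correct (including the orientation remark), the reduction of the second estimate to the first is then legitimate, and the splitting of $\bn_h-\bn$ through the ideal field obtained from $\Psi$, controlled by $\Vert \Theta_h-\Psi\Vert_{L^\infty}\lesssim h^{k_g+1}$ and $\Vert D(\Theta_h-\Psi)\Vert_{L^\infty}\lesssim h^{k_g}$ together with uniform invertibility of $D\Theta_h$, is exactly the right mechanism and is fair to quote from \cite{grande2017higher,lehrenfeld2016high,lehrenfeld2017analysis}.

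There is, however, one step that does not deliver the claimed rate as you have written it: the assertion that $\nabla\phi/\Vert\nabla\phi\Vert$ evaluated at $\Psi(x)$ ``matches $\bn$ to the required order on the thin strip $\Omega_\Theta^\Gamma$ by the smoothness of $\phi$''. In this paper $\bn=\nabla d$ is the constant normal extension, while $\phi$ is an arbitrary level set function that is \emph{not} assumed to be close to a distance function. For such $\phi$ one has, writing $g=\Vert\nabla\phi\Vert$, the expansion $\nabla\phi(y)/\Vert\nabla\phi(y)\Vert=\bn(y)+\tfrac{d(y)}{g}\bP\nabla^2\phi\,\bn+\mathcal{O}(d(y)^2)$, and the tangential term $\bP\nabla^2\phi\,\bn$ does not vanish in general. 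Since $\phi(\Psi(x))=\hat\phi_h(x)$ only satisfies $|\hat\phi_h(x)|\lesssim h$ on the cut elements, the points $\Psi(x)$ range over a strip of width $\sim h$, so smoothness of $\phi$ gives only $\Vert\nabla\phi/\Vert\nabla\phi\Vert-\bn\Vert_{L^\infty(\Omega_\Theta^\Gamma)}\lesssim h$, and this is sharp. Your argument therefore proves the strip estimate only at order $h^{\min(k_g,1)}$; it is complete for $k_g=1$ but has a genuine gap for $k_g\geq 2$, which must be closed either by assuming $\phi$ is (sufficiently close to) the signed distance function, or by interpreting $\bn$ in the first estimate as $\nabla\phi/\Vert\nabla\phi\Vert$. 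Note that the second estimate is unaffected: points of $\Gamma_h$ lie within $\mathcal{O}(h^{k_g+1})$ of $\Gamma$, so there the discrepancy between $\nabla\phi/\Vert\nabla\phi\Vert$ and $\nabla d$ is $\mathcal{O}(h^{k_g+1})$ and your chain of estimates goes through; so you should prove the $\Gamma_h$ estimate directly rather than deducing it from the strip estimate.
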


Similar to the extension of a function $u$ defined on $\Gamma$ to $u^e$ defined on $U_\delta$ we define the lifting $u^l$ of a function $u$ defined   on $\Gamma_h$ by 
\begin{equation*}
\begin{cases}
u^l(p(x)) = u(x) &\text{ for } x \in \Gamma_h, \\
u^l(x) = u^l(p(x)) &\text{ for } x \in U_\delta.
\end{cases}
\end{equation*} 
A norm on $H^1(\Gamma_h)^3$ is defined using the component-wise lifting by
\begin{equation*}
\Vert \bu \Vert_{H^1(\Gamma_h)}^2 := \int_{\Gamma_h} \Vert \bu(s) \Vert_2^2 + \Vert \nabla \bu^l(s) \bP_h(s) \Vert_2^2 \, ds
\end{equation*}
with $\bP_h = \bI - \bn_h \bn_h^T$. 
In \eqref{eqdefH1} the term with $\nabla \bu^e$ corresponds to tangential gradients of all components, cf. \eqref{defH1}.
 The lifting used in the definition of the $H^1(\Gamma_h)$-norm is constant along the normal to $\Gamma$ (not $\Gamma_h$). Therefore,  to eliminate the part of the (componentwise) gradient which is normal to $\Gamma_h$ one uses the projection $\bP_h$. We also introduce the  following  spaces
\begin{equation*} \begin{split}
V_{reg,h} := \left\lbrace v \in H^1(\Omega_{\Theta}^{\Gamma}) \mid \text{tr} |_{\Gamma_h} v \in H^1(\Gamma_h) \right\rbrace \supset V_{h,\Theta}^{k}, \quad
\bV_{reg,h} := \big(V_{reg,h}\big)^3.
\end{split}
\end{equation*}
\section{Parametric trace finite element methods} \label{sectFEmethods}
In this section we introduce three parametric trace finite element methods. These are obtained by applying a Galerkin method (modulo a geometry error due to $\Gamma_h \approx \Gamma$) to the three formulations \eqref{projectedcontform0}, \eqref{projectedcontform} and \eqref{contformlagrange} with the parametric finite element space $\bV_{h,\Theta}^{k}$. 

We introduce further notation. In particular,   discrete variants of the bilinear forms $a(\cdot,\cdot)$, $a_T(\cdot,\cdot)$ and the penalty bilinear form $k(\cdot,\cdot)$ introduced above. Since we use a trace FEM, we need a stabilization that eliminates instabilities caused by the small cuts. For this we use the so-called ``normal derivative volume stabilization'' \cite{grande2017higher}  ($s_h(\cdot,\cdot)$ and $\tilde{s}_h(\cdot,\cdot)$ below):
\begin{align*}
 \gradGh \bu(x) &:= \bP_h(x) \nabla \bu(x) \bP_h(x),\quad x \in \Gamma_h, \\ 
  E_h(\bu)&  := \frac12 \big(\gradGh \bu + \gradGh^T \bu\big), \quad E_{T,h}(\bu):=E_h(\bu) - u_N \bH_h, \\
 a_h(\bu,\bv) &:= \int_{\Gamma_h} E_h(\bu):E_h(\bv)\, ds_h + \int_{\Gamma_h} \bu_h \cdot \bv_h \, ds_h, \\
 a_{T,h}(\bu,\bv) &:= \int_{\Gamma_h} E_{T,h}(\bu):E_{T,h}(\bv)\, ds_h + \int_{\Gamma_h} \bP_h\bu_h \cdot \bP_h\bv_h \, ds_h,\\
 k_h(\bu,\bv)&:= \eta \int_{\Gamma_h} (\bu \cdot \tilde{\bn}_h) (\bv \cdot \tilde{\bn}_h)  \, ds_h,\qquad
  s_h(\bu,\bv) := \rho \int_{\Omega_{\Theta}^{\Gamma}} (\nabla \bu \bn_h) \cdot (\nabla \bv \bn_h)  \, dx, \\
  b_h(\bu,\mu)& := (\bu\cdot \bn_h,\mu)_{L^2(\Gamma_h)} + \tilde{s}_h(\bu,\mu), ~ \tilde{s}_h(\bu,\mu) := \tilde \rho  \int_{\Omega_{\Theta}^{\Gamma}} (\bn_h^T \nabla \bu \bn_h) (\bn_h \cdot \nabla \mu) \, dx.
\end{align*}
All these bilinear forms are well-defined for $\bu,\bv \in \bV_{reg,h}$, $\mu  \in V_{reg,h}$.
The normal vector $\tilde{\bn}_h$, used in the penalty term $k_h(\cdot, \cdot)$, and the curvature tensor $\bH_h$ are approximations of the exact normal and the exact Weingarten mapping, respectively. The choice of the stabilization parameters $\rho$, $\tilde \rho$ is discussed below. 
\begin{remark} \rm
We use $E_{T,h}(\bu):=E_h(\bu) - u_N \bH_h$ instead of $E_{T,h}(\bu)=E_h(\bP_h\bu)$ because the latter requires (tangential) differentiation of $\bP_h$, which has certain disadvantages.
The reason that we introduce yet another normal approximation $\tilde{\bn}_h$ is the following. In the analysis below we will see that in order to achieve optimal order estimates  we need the normal $\tilde{\bn}_h$ used in the penalty term to be an approximation of at least one order higher than the normal approximation $\bn_h$. 
An approximation $\bH_h$ of the Weingarten map can be easily obtained, e.g.,  by taking $\bH_h = \nabla(I_{\Theta}^{k_g}(\bn_h))$. 
The stabilization with  $s_h(\cdot,\cdot)$ used in the variational penalty formulations below guarantees that the stiffness matrix has a spectral condition number $\sim h^{-2}$, independent of how the interface cuts the outer triangulation.
\end{remark}

To quantify the error in the approximations $\tilde \bn_h \approx  \bn$, $\bH_h \approx \bH$, we introduce one further order parameter $k_p$ (besides $k$ and $k_g$) and assume:
\begin{align}
\Vert \bn - \tilde{\bn}_h \Vert_{L^{\infty}(\Gamma_h)} &\lesssim h^{k_p}, \quad k_p \geq k_g,\\
\Vert \bH - \bH_h \Vert_{L^{\infty}(\Gamma_h)} &\lesssim h^{k_{g}-1}. \label{eqapproxH}
\end{align}
We now introduce discrete versions of the formulations \eqref{projectedcontform0}, \eqref{projectedcontform} and \eqref{contformlagrange}. For these we  need a suitable extension of the data $\bbf$ to $\Gamma_h$, which is denoted by $\bbf_h$. 
\\
\emph{Discrete inconsistent penalty formulation.} This problem is as follows: determine $\bu_h \in  \bV_{h,\Theta}^{k}$ such that for all $\bv_h \in \bV_{h,\Theta}^{k}$
\begin{equation} 
A_h^{P_1}(\bu_h,\bv_h): =  a_h(\bu_h,\bv_h) + s_h(\bu_h,\bv_h) + k_h(\bu_h, \bv_h) = (\bbf_h, \bv_h)_{L^2(\Gamma_h)}. \tag{P1h}\label{discretepenaltyform1}
\end{equation}
\emph{Discrete consistent penalty formulation.} This problem is as follows: determine
 $\bu_h \in  \bV_{h,\Theta}^{k}$ such that for all $\bv_h \in \bV_{h,\Theta}^{k}$
\begin{equation} 
A_h^{P_2}(\bu_h,\bv_h): = a_{T,h}(\bu_h,\bv_h) + s_h(\bu_h,\bv_h) + k_h(\bu_h, \bv_h)=(\bbf_h, \bv_h)_{L^2(\Gamma_h)}. \tag{P2h}\label{discretepenaltyform2}
\end{equation}
\emph{Discrete  Lagrange multiplier formulation.} This problem is as follows:  determine $(\bu_h, \lambda_h) \in \bV_{h,\Theta}^{k} \times V_{h,\Theta}^{k_l}$ such that
\begin{equation} \tag{Lh} \label{discretelagrangeform}
\begin{aligned}
A_h^L(\bu_h,\bv_h) + b_h(\bv_h,\lambda_h) & =(\mathbf{f}_h,\bv_h)_{L^2(\Gamma)} &\quad &\text{for all } \bv_h \in \bV_{h,\Theta}^{k} \\ 
b_h(\bu_h,\mu_h) & = 0 &\quad &\text{for all }\mu_h \in V_{h,\Theta}^{k_l}
\end{aligned}
\end{equation}
with 
$
 A_h^L(\bu,\bv) := a_h(\bu,\bv) + s_h(\bu,\bv)$.
%
\begin{remark} \rm
Problem \eqref{discretelagrangeform} uses a Lagrange multiplier approach to enforce the tangential condition weakly. This formulation is consistent without using additional tangential projections in $a_h(\cdot,\cdot)$ and avoids the approximation of the Weingarten map $\bH$. An obvious drawback  of this formulation is that the resulting linear systems can be significantly larger than the ones in the penalty formulation. In addition to the stabilization $s_h(\cdot, \cdot)$ we  use a "normal derivative volume" stabilization for the Lagrange multiplier term as well. Different from  $s_h(\cdot, \cdot)$ this stabilization $\tilde s_h(\cdot, \cdot)$ is essential for the well-posedness of this formulation, cf. section~\ref{sectwellposed}.
\end{remark}

\section{Error analysis of the parametric TraceFEM} \label{sectAnalysis}

In this section we present an error analysis of the TraceFEMs  \eqref{discretepenaltyform1}, \eqref{discretepenaltyform2} and \eqref{discretelagrangeform}. We first address the choice of the stabilization parameters  $\rho$, $\tilde \rho$. From the analysis in \cite{grande2017higher} it is known that for optimal error bounds  one must  restrict $\rho$ to the range $h \lesssim \rho \lesssim h^{-1}$. A more detailed analysis (that we do not present here) has shown that there are no significant gains if one chooses for $\tilde \rho$ a different scaling w.r.t $h$ as for $\rho$. Therefore, to simplify the presentation, in the remainder we restrict the stabilization parameters to 
\begin{equation} \label{choicerho}
 h \lesssim \rho =\tilde \rho  \lesssim h^{-1}.
\end{equation}

\subsection{Well-posedness of discretizations} \label{sectwellposed}

We start with some basic results concerning the bilinear forms.
We use the following natural norms
\begin{equation*} \begin{split}
\Vert \bu \Vert_{A_h^{P_i}}^2 &:= A_h^{P_i}(\bu, \bu), ~~i=1,2, \quad 
\Vert \bu \Vert_{A_h^L}^2 := A_h^L(\bu, \bu), \\
\Vert \mu \Vert_{M}^2 &:= \Vert \mu \Vert_{L^2(\Gamma_h)}^2 + \rho \Vert \bn_h \cdot \nabla \mu \Vert_{L^2(\Omega_{\Theta}^{\Gamma})}^2 .
\end{split}
\end{equation*}
Before we analyze continuity and ellipticity of the bilinear forms we recall a lemma which shows that for finite element functions the $L^2$-norm in the neighborhood $\Omega_{\Theta}^{\Gamma}$ can be controlled by the $L^2$-norm on $\Gamma_h$ and the $L^2$-norm of the normal derivative  on $\Omega_{\Theta}^{\Gamma}$. 
\begin{lemma} \label{lemmastabilizationinequality}
For all $k \in \mathbb{N}$, $k\geq 1$, the following inequality holds:
\begin{equation*}
\Vert v_h \Vert_{L^2(\Omega_{\Theta}^{\Gamma})}^2 \lesssim h\Vert v_h \Vert_{L^2(\Gamma_h)}^2 + h^2\Vert \bn_h \cdot \nabla v_h \Vert_{L^2(\Omega_{\Theta}^{\Gamma})}^2   ~~~~\text{for all} ~v_h \in V_{h,\Theta}^{k}.
\end{equation*}
\end{lemma}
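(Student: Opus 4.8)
The plan is to reduce the claim to the piecewise-\emph{linear} reference configuration via the mesh transformation $\Theta_h$, and then to prove the reduced inequality by a one-dimensional fundamental-theorem-of-calculus (FTC) argument along lines in the elementwise constant normal direction $\bn_{\text{lin}}$. First I would set $\hat v_h := v_h\circ\Theta_h\in (V_h^k)_{\OGamma}$. Since $\Theta_h$ is bi-Lipschitz with $D\Theta_h=\bI+O(h)$ and with uniformly bounded Jacobian and inverse (Remark~\ref{transfo}, \cite{grande2017higher}), the transformation estimates of \cite{grande2017higher} make each of the three terms equivalent, up to $h$-independent constants, to its counterpart in the straight configuration --- the scaling of $\bn_h$ in Lemma~\ref{lemmanormals} being chosen precisely so that the normal-derivative stabilization transforms consistently. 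Hence it suffices to prove
\[\|\hat v_h\|_{L^2(\OGamma)}^2 \lesssim h\,\|\hat v_h\|_{L^2(\Gamma^{\text{lin}})}^2 + h^2\,\|\bn_{\text{lin}}\cdot\nabla\hat v_h\|_{L^2(\OGamma)}^2\qquad \text{for all } \hat v_h\in (V_h^k)_{\OGamma},\]
with constants depending only on shape-regularity and $k$.

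For the reduced inequality I would fix $T\in\T_h^\Gamma$ and use that $\bn_{\text{lin}}=\bn_{\text{lin}}(T)$ is constant on $T$, while $\Gamma^{\text{lin}}\cap T$ is exactly the planar slice orthogonal to $\bn_{\text{lin}}$ (it is a level set of the affine function $\hat\phi_h|_T$, whose gradient is parallel to $\bn_{\text{lin}}$). In coordinates $(s,y)$ with $s$ along $\bn_{\text{lin}}$ and $y$ in the orthogonal plane, the restriction of $\hat v_h$ to each normal segment is a degree-$k$ polynomial in $s$; for a segment meeting the slice at $s=c$ the identity $\hat v_h(s,y)=\hat v_h(c,y)+\int_c^s(\bn_{\text{lin}}\cdot\nabla\hat v_h)\,dt$, together with Cauchy--Schwarz and $|s-c|\lesssim h$, gives after integration in $s$ (length $\lesssim h$) and in $y$ a bound of the form $h\,\|\hat v_h\|_{L^2(\Gamma^{\text{lin}}\cap T)}^2+h^2\,\|\bn_{\text{lin}}\cdot\nabla\hat v_h\|_{L^2(T)}^2$ for the part of the $L^2(T)$-mass of $\hat v_h$ carried by normal segments that actually meet $\Gamma^{\text{lin}}\cap T$. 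This already controls, with the correct powers of $h$, the portion of each element ``shadowed'' by its own cut.

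The main obstacle is the complementary small-cut situation: when $\Gamma^{\text{lin}}\cap T$ is a sliver, most normal segments of $T$ miss it, and on this part the elementwise trace controls nothing with an $h$-uniform constant. I would resolve this using the \emph{global} finite element structure. Because $\hat v_h$ is continuous across element faces and the strip $\OGamma$ has width $\sim h$, control can be propagated from elements whose cut has a substantial shadow to sliver-cut elements by combining the elementwise FTC above with the normal-derivative stabilization along chains of a \emph{uniformly bounded} number of neighbouring cut elements; transversality of the normal direction to $\Gamma^{\text{lin}}$, guaranteed by $\|\bn_{\text{lin}}-\bn\|\lesssim h$ (Lemma~\ref{lemmanormals}, $k_g\ge1$), keeps these chains of length $O(1)$. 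Summing over $T\in\T_h^\Gamma$ with finite overlap and transferring back through $\Theta_h$ then gives the lemma. This propagation is the finite element analogue of the ghost-penalty mechanism, and making its constants independent of how $\Gamma^{\text{lin}}$ cuts the background mesh is the technical heart of the argument --- it is also where shape-regularity, quasi-uniformity, and the degree $k\ge1$ (through inverse and transformation estimates) are used.
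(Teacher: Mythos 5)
You should first be aware that the paper does not actually prove this lemma: its ``proof'' is a citation of Lemma~7.8 in \cite{grande2017higher} (stated there for the isoparametric case $k=k_g$) together with the observation that the cited argument carries over to $k\neq k_g$. Your blind attempt at a self-contained proof therefore necessarily goes beyond the paper, and the overall strategy you describe --- pull everything back to the piecewise-linear configuration through $\Theta_h$, run a fundamental-theorem-of-calculus argument along (approximately) normal segments emanating from $\Gamma^{\text{lin}}$, and deal with small cuts by a global argument --- is indeed the strategy behind the cited result. The reduction step and the FTC bound on the part of each $T\in\T_h^\Gamma$ that is shadowed by its own cut are fine as written.

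The problem is that the one step that actually carries the lemma --- uniformity of the constant in the position of the cut --- is only described, not proved; you yourself label it ``the technical heart of the argument'' and then stop. Concretely, two things are missing from the proposed propagation mechanism. First, the broken segment you follow from a point $x$ in a sliver-cut element toward $\Gamma^{\text{lin}}$ must remain inside $\OGamma$: every element it traverses must itself belong to $\T_h^\Gamma$, because $\hat v_h$ is only the restriction of a finite element function to $\OGamma$ and, more importantly, the term $h^2\Vert\bn_{\text{lin}}\cdot\nabla\hat v_h\Vert^2$ on the right-hand side is only available over $\OGamma$. It is not automatic that an element sitting between $x$ and the first zero of $\hat\phi_h$ along the segment is cut (it may lie entirely on one side of $\Gamma^{\text{lin}}$), and without either proving this covering property or replacing it by a norm-equivalent polynomial extension to neighbouring uncut elements (the genuine ghost-penalty mechanism), the chain argument does not close. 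Second, ``control can be propagated \ldots by combining the elementwise FTC with the normal-derivative stabilization along chains'' needs to be an actual inequality between neighbouring elements, e.g.\ a bound of $\Vert\hat v_h\Vert_{L^2(T')}^2$ by $\Vert\hat v_h\Vert_{L^2(T)}^2+h^2\Vert\bn_{\text{lin}}\cdot\nabla\hat v_h\Vert_{L^2(T\cup T')}^2$ for face-neighbours $T,T'$, and proving that with a cut-independent constant runs into the same covering question (the normal lines through $T$ must sweep out $T'$). Note that no purely elementwise version of the lemma can hold uniformly in the cut position --- for a cut degenerating to a vertex the trace term vanishes while constants have zero normal derivative --- so this global step cannot be waved through; as it stands the proposal is a correct plan with the decisive estimate left unproved.
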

\begin{proof}
 In \cite[Lemma 7.8]{grande2017higher} a proof of this result for the isoparametric case (i.e. $k =k_g$) is given. This proof also applies to the case $k\neq k_g$.
\end{proof}

We formulate a few corollaries that are   useful in the remainder. Using the trace inequality \eqref{eqtraceestimate} and a standard finite element inverse inequality one obtains
\begin{align}
 \Vert v_h \Vert_{L^2(\Omega_{\Theta}^{\Gamma})}^2 &\sim h\Vert v_h \Vert_{L^2(\Gamma_h)}^2 + h^2\Vert \bn_h \cdot \nabla v_h \Vert_{L^2(\Omega_{\Theta}^{\Gamma})}^2   ~~~\text{for all} ~v_h \in V_{h,\Theta}^{k}, \label{HH1} \\
 \Vert \bv_h \Vert_{L^2(\Omega_{\Theta}^{\Gamma})}^2 &\sim h\Vert \bv_h \Vert_{L^2(\Gamma_h)}^2 + h^2\Vert \nabla \bv_h \bn_h \Vert_{L^2(\Omega_{\Theta}^{\Gamma})}^2   ~~~\text{for all} ~\bv_h \in \bV_{h,\Theta}^{k}.\label{HH2}
\end{align}
The result in \eqref{HH2} is obtained by componentwise application of \eqref{HH1}. A further direct consequence of Lemma~\ref{lemmastabilizationinequality} is  (we use \eqref{choicerho}):
\begin{equation} \label{HH3}
 \Vert v_h \Vert_{L^2(\Omega_{\Theta}^{\Gamma})} \lesssim h^\frac12 \|v_h\|_M~~~\text{for all} ~v_h \in V_{h,\Theta}^{k}.
\end{equation}
Using \eqref{eqtraceestimate} and \eqref{HH1} we also obtain the inverse inequality
\begin{equation} \label{HH4}
 \|\nabla v_h \|_{L^2(\Gamma_h)} \lesssim h^{-1} \|v_h\|_{L^2(\Gamma_h)} + h^{-\frac12} \|\bn_h \cdot \nabla v_h\|_{L^2(\Omega_{\Theta}^{\Gamma})} \lesssim h^{-1}\|v_h\|_M,~ ~v_h \in V_{h,\Theta}^{k},
\end{equation}
and the vector analogon
\begin{equation} \label{HH4a}
 \|\nabla \bv_h \|_{L^2(\Gamma_h)} \lesssim h^{-1} \|\bv_h\|_{L^2(\Gamma_h)} + h^{-\frac12} \| \nabla \bv_h\bn_h\|_{L^2(\Omega_{\Theta}^{\Gamma})},~~
\bv_h \in \bV_{h,\Theta}^{k}.
\end{equation}
\begin{lemma} \label{Ahscalarproduct}
The following inequalities hold:
\begin{align*} 
A_h^{P_i}(\bu,\bv) &\leq \Vert \bu \Vert_{A_h^{P_i}} \Vert \bv \Vert_{A_h^{P_i}} ~~~ \text{for all}~ \bu, \bv \in \bV_{reg,h},~~i=1,2.\\ 
A_h^L(\bu,\bv) &\leq \Vert \bu \Vert_{A_h^L} \Vert \bv \Vert_{A_h^L} ~~~ \text{for all}~ \bu, \bv \in \bV_{reg,h}, \\
b_h(\bu,\mu) &\leq \Vert \bu \Vert_{A_h^L} \Vert \mu \Vert_{M} ~~~ \text{for all}~ \bu \in \bV_{reg,h}, \mu \in V_{reg,h},\\
A_h^{P_i}(\bu_h, \bu_h) &\gtrsim h^{-1} \Vert \bu_h \Vert_{L^2(\Omega_{\Theta}^{\Gamma})}^2 ~~~ \text{for all} ~ \bu_h \in \bV_{h,\Theta}^{k}, ~i=1,2,\\
A_h^L(\bu_h, \bu_h) &\gtrsim h^{-1} \Vert \bu_h \Vert_{L^2(\Omega_{\Theta}^{\Gamma})}^2 ~~~ \text{for all} ~ \bu_h \in \bV_{h,\Theta}^{k}.
\end{align*}
\end{lemma}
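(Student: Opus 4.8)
The plan is to handle the three continuity estimates and the three lower bounds separately, since they rest on different facts. For the continuity of $A_h^{P_1}$, $A_h^{P_2}$ and $A_h^L$, I would first observe that each of these bilinear forms is symmetric and positive semidefinite: $a_h(\bu,\bu)$, $a_{T,h}(\bu,\bu)$, $s_h(\bu,\bu)$ and $k_h(\bu,\bu)$ are all sums of squared $L^2$-integrals, so every one of these forms $A$ satisfies $A(\bu,\bu)\ge 0$ together with $A(\bu,\bv)=A(\bv,\bu)$. Hence $\Vert\cdot\Vert_A:=A(\cdot,\cdot)^{1/2}$ is a seminorm, and the three inequalities $A_h^{P_i}(\bu,\bv)\le\Vert\bu\Vert_{A_h^{P_i}}\Vert\bv\Vert_{A_h^{P_i}}$ and $A_h^L(\bu,\bv)\le\Vert\bu\Vert_{A_h^L}\Vert\bv\Vert_{A_h^L}$ are just the Cauchy--Schwarz inequality for a symmetric positive semidefinite form. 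No geometry or finite-element structure enters here.

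For the mixed bound on $b_h$ I would split $b_h(\bu,\mu)=(\bu\cdot\bn_h,\mu)_{L^2(\Gamma_h)}+\tilde s_h(\bu,\mu)$ and estimate each summand by Cauchy--Schwarz at the integral level: $(\bu\cdot\bn_h,\mu)_{L^2(\Gamma_h)}\le\Vert\bu\Vert_{L^2(\Gamma_h)}\Vert\mu\Vert_{L^2(\Gamma_h)}$ (using $\Vert\bn_h\Vert_2=1$), and, since $|\bn_h^T\nabla\bu\bn_h|\le\Vert\nabla\bu\bn_h\Vert_2$ and $\rho=\tilde\rho$ by \eqref{choicerho}, $\tilde s_h(\bu,\mu)\le\rho\,\Vert\nabla\bu\bn_h\Vert_{L^2(\Omega_{\Theta}^{\Gamma})}\Vert\bn_h\cdot\nabla\mu\Vert_{L^2(\Omega_{\Theta}^{\Gamma})}$. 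The key step is then a discrete Cauchy--Schwarz in $\R^2$: with $a_1=\Vert\bu\Vert_{L^2(\Gamma_h)}$, $a_2=\rho^{1/2}\Vert\nabla\bu\bn_h\Vert_{L^2(\Omega_{\Theta}^{\Gamma})}$ and $b_1=\Vert\mu\Vert_{L^2(\Gamma_h)}$, $b_2=\rho^{1/2}\Vert\bn_h\cdot\nabla\mu\Vert_{L^2(\Omega_{\Theta}^{\Gamma})}$ one obtains $b_h(\bu,\mu)\le a_1b_1+a_2b_2\le(a_1^2+a_2^2)^{1/2}(b_1^2+b_2^2)^{1/2}$. The second factor equals $\Vert\mu\Vert_M$ by definition, and the first factor is at most $\Vert\bu\Vert_{A_h^L}$ because, discarding the nonnegative $E_h$-term, $A_h^L(\bu,\bu)=a_h(\bu,\bu)+s_h(\bu,\bu)\ge\Vert\bu\Vert_{L^2(\Gamma_h)}^2+\rho\Vert\nabla\bu\bn_h\Vert_{L^2(\Omega_{\Theta}^{\Gamma})}^2=a_1^2+a_2^2$.

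For the lower bounds I would start from the vector stabilization equivalence \eqref{HH2}, which rearranges to $h^{-1}\Vert\bu_h\Vert_{L^2(\Omega_{\Theta}^{\Gamma})}^2\lesssim\Vert\bu_h\Vert_{L^2(\Gamma_h)}^2+h\Vert\nabla\bu_h\bn_h\Vert_{L^2(\Omega_{\Theta}^{\Gamma})}^2$. For $A_h^{P_1}$ and $A_h^L$ the argument is then immediate: retaining only the $\int_{\Gamma_h}\Vert\bu_h\Vert_2^2$ term of $a_h$ and the $s_h$-term gives $A_h^{P_1}(\bu_h,\bu_h),\,A_h^L(\bu_h,\bu_h)\ge\Vert\bu_h\Vert_{L^2(\Gamma_h)}^2+\rho\Vert\nabla\bu_h\bn_h\Vert_{L^2(\Omega_{\Theta}^{\Gamma})}^2$, and since $\rho\gtrsim h$ by \eqref{choicerho} the right-hand side dominates $\Vert\bu_h\Vert_{L^2(\Gamma_h)}^2+h\Vert\nabla\bu_h\bn_h\Vert^2$, hence $\gtrsim h^{-1}\Vert\bu_h\Vert_{L^2(\Omega_{\Theta}^{\Gamma})}^2$.

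The main obstacle is the lower bound for $A_h^{P_2}$, because $a_{T,h}$ controls only the tangential part $\bP_h\bu_h$, so I must reconstruct the full $\Vert\bu_h\Vert_{L^2(\Gamma_h)}^2=\Vert\bP_h\bu_h\Vert_{L^2(\Gamma_h)}^2+\Vert\bu_h\cdot\bn_h\Vert_{L^2(\Gamma_h)}^2$ using the penalty term $k_h$, which sees the normal part only through the approximate normal $\tilde\bn_h$. I would write $\bu_h\cdot\bn_h=\bu_h\cdot\tilde\bn_h+\bu_h\cdot(\bn_h-\tilde\bn_h)$ and use $\Vert\bn_h-\tilde\bn_h\Vert_{L^\infty(\Gamma_h)}\lesssim h^{k_g}\lesssim h$ (from Lemma~\ref{lemmanormals} together with the assumption $\Vert\bn-\tilde\bn_h\Vert_{L^\infty}\lesssim h^{k_p}$, $k_p\ge k_g\ge1$) to get $\Vert\bu_h\cdot\bn_h\Vert_{L^2(\Gamma_h)}^2\lesssim\Vert\bu_h\cdot\tilde\bn_h\Vert_{L^2(\Gamma_h)}^2+h^2\Vert\bu_h\Vert_{L^2(\Gamma_h)}^2$. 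Combining yields $\Vert\bu_h\Vert_{L^2(\Gamma_h)}^2\lesssim\Vert\bP_h\bu_h\Vert_{L^2(\Gamma_h)}^2+\Vert\bu_h\cdot\tilde\bn_h\Vert_{L^2(\Gamma_h)}^2+h^2\Vert\bu_h\Vert_{L^2(\Gamma_h)}^2$, and for $h$ small enough the last term is absorbed into the left-hand side. Bounding $\Vert\bP_h\bu_h\Vert^2$ by $a_{T,h}(\bu_h,\bu_h)$ and $\Vert\bu_h\cdot\tilde\bn_h\Vert^2$ by $\eta^{-1}k_h(\bu_h,\bu_h)\le k_h(\bu_h,\bu_h)$ (using $\eta\gtrsim1$), together with $\rho\gtrsim h$ for the $s_h$-term, gives $A_h^{P_2}(\bu_h,\bu_h)\gtrsim\Vert\bu_h\Vert_{L^2(\Gamma_h)}^2+h\Vert\nabla\bu_h\bn_h\Vert^2\gtrsim h^{-1}\Vert\bu_h\Vert_{L^2(\Omega_{\Theta}^{\Gamma})}^2$. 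I expect the absorption step and the careful bookkeeping of the $\eta$-dependence in the implied constant to be the only genuinely delicate points.
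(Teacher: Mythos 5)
Your proposal is correct and follows essentially the same route as the paper: Cauchy--Schwarz for the first three bounds, and the stabilization equivalence \eqref{HH2} together with $\rho\gtrsim h$ for the two ellipticity-type lower bounds. Your treatment of the $A_h^{P_2}$ case is in fact more careful than the paper's one-line display, which asserts $A_h^{P_2}(\bu_h,\bu_h)\geq\Vert\bu_h\Vert_{L^2(\Gamma_h)}^2+\rho\Vert\nabla\bu_h\bn_h\Vert_{L^2(\Omega_\Theta^\Gamma)}^2$ even though $a_{T,h}$ only controls $\Vert\bP_h\bu_h\Vert_{L^2(\Gamma_h)}^2$; your reconstruction of the normal component from the penalty term via $\tilde\bn_h$, with the $h^2$-absorption and the implicit requirements $\eta\gtrsim 1$ and $h$ sufficiently small, supplies exactly the justification the paper leaves tacit.
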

\begin{proof}
The first three  estimates follow directly from the Cauchy-Schwarz inequality. To show the other two we use \eqref{HH2}:
\begin{equation*} \begin{split}
A_h^{P_i}(\bu_h, \bu_h) & \geq    \Vert \bu_h \Vert_{L^2(\Gamma_h)}^2 + \rho \Vert \nabla \bu_h \bn_h \Vert_{L^2(\Omega_\Theta^\Gamma)}^2 
\gtrsim h^{-1} \Vert \bu_h \Vert_{L^2(\Omega_\Theta^{\Gamma})}^2 \\
A_h^L(\bu_h, \bu_h) &\geq   \Vert \bu_h \Vert_{L^2(\Gamma_h)}^2 + \rho \Vert \nabla \bu_h \bn_h \Vert_{L^2(\Omega_\Theta^\Gamma)}^2 \gtrsim h^{-1} \Vert \bu_h \Vert_{L^2(\Omega_\Theta^{\Gamma})}^2.
\end{split}
\end{equation*}
\end{proof}

From Lemma \ref{Ahscalarproduct} it follows that the discrete penalty problems \eqref{discretepenaltyform1} and \eqref{discretepenaltyform2} have unique solutions. For well-posedness of the discrete Lagrange multiplier formulation \eqref{discretelagrangeform} we need a discrete inf-sup estimate. This we will now derive. We outline the idea of the analysis. For the bilinear form $b(\bu,\mu)=(\bu\cdot\bn,\mu)_{L^2(\Gamma)}$ used in the continuous problem \eqref{contformlagrange} we have, for arbitray $\mu \in L^2(\Gamma)$ and with $\hat \bu:= \mu \bn$ that $b(\hat \bu,\mu)=\|\mu\|_{L^2(\Gamma)}^2$. Furthermore, $a(\hat \bu,\hat \bu)= \|\hat u_N \bH\|_{L^2(\Gamma)}^2 +\|\hat \bu\|_{L^2(\Gamma)}^2 \leq c \|\mu\|_{L^2(\Gamma)}^2$ holds. From this the inf-sup property of $b(\cdot,\cdot)$ for the continuous problem can easily be concluded. For deriving a discrete inf-sup result we combine this approach with perturbation arguments. In Lemma~\ref{lemmainfsup1}  we analyze the perturbation $b_h(\hat \bu,\mu)- b(\hat\bu,\mu)$, and in Lemma~\ref{lemmainfsup2}  we derive the discrete analogon of $a(\hat \bu,\hat \bu) \leq c \|\mu\|_{L^2(\Gamma)}^2$. Combining these results we obtain the discrete inf-sup property in Lemma~\ref{lemdiscreteinfsup}.

\begin{lemma} \label{lemmainfsup1}
For $h$ small enough the following inequality holds:
\begin{equation*}
b_h(\mu_h \bn, \mu_h) \gtrsim \Vert \mu_h \Vert_{M}^2 \qquad \text{for all } \mu_h \in V_{h,\Theta}^{m}.
\end{equation*}
\end{lemma}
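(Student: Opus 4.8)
The plan is to test $b_h$ directly with $\bu=\mu_h\bn$ and show that, up to a perturbation that vanishes as $h\to 0$, the two contributions to $b_h(\mu_h\bn,\mu_h)$ reproduce exactly the two terms defining $\|\mu_h\|_M^2$. Writing out the definition gives
\begin{equation*}
b_h(\mu_h\bn,\mu_h) = \int_{\Gamma_h}(\bn\cdot\bn_h)\,\mu_h^2\,ds_h + \tilde\rho\int_{\Omega_{\Theta}^{\Gamma}}\big(\bn_h^T\nabla(\mu_h\bn)\bn_h\big)(\bn_h\cdot\nabla\mu_h)\,dx.
\end{equation*}
First I would treat the boundary term. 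Since $\bn$ and $\bn_h$ are both unit vectors, $\bn\cdot\bn_h = 1-\tfrac12\|\bn-\bn_h\|_2^2$, and Lemma~\ref{lemmanormals} gives $\|\bn-\bn_h\|_{L^\infty(\Gamma_h)}\lesssim h^{k_g}$; hence $\bn\cdot\bn_h\geq 1-c\,h^{2k_g}$, and the first term is bounded below by $(1-c\,h^{2k_g})\|\mu_h\|_{L^2(\Gamma_h)}^2$.

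For the stabilization term I would use the product rule $\nabla(\mu_h\bn)=\bn(\nabla\mu_h)^T+\mu_h\nabla\bn$, with $\nabla\bn=\bH$. Contracting with $\bn_h$ on both sides yields
\begin{equation*}
\bn_h^T\nabla(\mu_h\bn)\bn_h = (\bn\cdot\bn_h)(\bn_h\cdot\nabla\mu_h) + \mu_h\,\bn_h^T\bH\bn_h.
\end{equation*}
The key observation is that $\bH\bn=0$ (since $\bn=\nabla d$ and $\bH=\nabla^2d$), so by symmetry of $\bH$ the curvature contraction reduces to $\bn_h^T\bH\bn_h=(\bn_h-\bn)^T\bH(\bn_h-\bn)$, whence $|\bn_h^T\bH\bn_h|\lesssim h^{2k_g}$. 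Consequently the leading part of $\tilde{s}_h(\mu_h\bn,\mu_h)$ is $(1-O(h^{2k_g}))\,\tilde\rho\,\|\bn_h\cdot\nabla\mu_h\|_{L^2(\Omega_{\Theta}^{\Gamma})}^2$, which, using $\tilde\rho=\rho$, is precisely the second term of $\|\mu_h\|_M^2$ up to a small factor.

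The only piece not immediately of the required form is the cross term $\tilde\rho\int_{\Omega_{\Theta}^{\Gamma}}\mu_h(\bn_h^T\bH\bn_h)(\bn_h\cdot\nabla\mu_h)\,dx$, and controlling it is the main (if mild) obstacle. By Cauchy--Schwarz it is bounded by $C\rho\,h^{2k_g}\|\mu_h\|_{L^2(\Omega_{\Theta}^{\Gamma})}\|\bn_h\cdot\nabla\mu_h\|_{L^2(\Omega_{\Theta}^{\Gamma})}$. Here I would invoke the stabilization estimate \eqref{HH3}, namely $\|\mu_h\|_{L^2(\Omega_{\Theta}^{\Gamma})}\lesssim h^{1/2}\|\mu_h\|_M$, together with the trivial bound $\|\bn_h\cdot\nabla\mu_h\|_{L^2(\Omega_{\Theta}^{\Gamma})}\leq\rho^{-1/2}\|\mu_h\|_M$, to reduce the cross term to $C\rho^{1/2}h^{2k_g+1/2}\|\mu_h\|_M^2$. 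Since $\rho\lesssim h^{-1}$ and $k_g\geq 1$, this is $\lesssim h^{2k_g}\|\mu_h\|_M^2$, a genuinely small perturbation. Collecting the three estimates gives $b_h(\mu_h\bn,\mu_h)\geq(1-c'\,h^{2k_g})\|\mu_h\|_M^2$, and choosing $h$ small enough that $c'\,h^{2k_g}\leq\tfrac12$ yields the claim. The argument is uniform in the polynomial degree $m$, because the only finite-element-specific input, estimate \eqref{HH3}, holds for every degree.
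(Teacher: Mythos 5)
Your proposal is correct and follows essentially the same route as the paper: split $b_h(\mu_h\bn,\mu_h)$ into the surface term and the stabilization term, use $\bn\cdot\bn_h\geq 1-ch^{2k_g}$ from Lemma~\ref{lemmanormals} for the leading parts, kill the curvature cross term via $\nabla\bn\,\bn=0$ (so $\bn_h^T\nabla\bn\,\bn_h=(\bn_h-\bn)^T\nabla\bn(\bn_h-\bn)=O(h^{2k_g})$), and absorb it using \eqref{HH3} together with $\rho\lesssim h^{-1}$. The only differences are cosmetic (you make the identity $\bn\cdot\bn_h=1-\tfrac12\|\bn-\bn_h\|_2^2$ and the intermediate factor $\rho^{1/2}h^{2k_g+1/2}$ explicit), so there is nothing to add.
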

\begin{proof}
Using Lemma \ref{lemmanormals} we get
\begin{equation*}
2-2\bn \cdot\bn_h \leq \Vert \bn - \bn_h \Vert_{L^\infty(\Gamma_h)}^2 \lesssim h^{2k_g} \qquad \text{a.e. on } \Gamma_h.
\end{equation*}
Hence, there exists a constant $c>0$ with
\begin{equation} \label{eqnnh}
1- ch^{2k_g} \lesssim \bn \cdot \bn_h .
\end{equation}
Take $\mu_h \in V_{h,\Theta}^{m}$. From the definition of $b_h(\cdot, \cdot)$ we obtain
\begin{equation}  \label{eqinfsup1}
b_h(\mu_h \bn, \mu_h) =\underbrace{ (\mu_h \bn \cdot \bn_h, \mu_h)_{L^2(\Gamma_h)}}_{(I)} +   \underbrace{\rho \int_{\Omega_{\Theta}^{\Gamma}} (\bn_h^T \nabla (\mu_h \bn) \bn_h) (\bn_h \cdot \nabla \mu_h) \, dx}_{(II)}.
\end{equation}
Using inequality \eqref{eqnnh} the term (I) can be estimated by
\begin{equation}  \label{eqinfsup2}
(\mu_h \bn \cdot \bn_h, \mu_h)_{L^2(\Gamma_h)} \gtrsim
(1-ch^{2k_g}) \Vert \mu_h \Vert_{L^2(\Gamma_h)}^2,
\end{equation}
and term (II) by
\begin{equation}  \label{eqinfsup3} \begin{split}
& \rho \int_{\Omega_{\Theta}^{\Gamma}} (\bn_h^T \nabla (\mu_h \bn) \bn_h) (\bn_h \cdot \nabla \mu_h) \, dx \\
&= \rho \int_{\Omega_{\Theta}^{\Gamma}} (\bn_h \cdot \nabla \mu_h)( \bn \cdot \bn_h) (\bn_h \cdot \nabla \mu_h) \, dx + \rho \int_{\Omega_{\Theta}^{\Gamma}} \mu_h(\bn_h^T \nabla \bn \bn_h) (\bn_h \cdot \nabla \mu_h) \, dx \\
&\gtrsim (1-ch^{2k_g}) \rho \Vert\bn_h \cdot  \nabla \mu_h \Vert_{L^2(\Omega^\Gamma_\Theta)}^2 + \rho \int_{\Omega_{\Theta}^{\Gamma}} \mu_h(\bn_h^T \nabla \bn \bn_h) (\bn_h \cdot \nabla \mu_h) \, dx.
\end{split}
\end{equation}
Since $\nabla \bn \bn =0$ and $\bn^T \nabla \bn = 0$ we get for the last term on the right hand side
\begin{align*}
&\rho \int_{\Omega_{\Theta}^{\Gamma}} \mu_h(\bn_h^T \nabla \bn \bn_h) (\bn_h \cdot \nabla \mu_h) \, dx \\
&=\rho \int_{\Omega_{\Theta}^{\Gamma}} \mu_h\big((\bn_h^T - \bn^T) \nabla \bn (\bn_h-\bn)\big) (\bn_h \cdot \nabla \mu_h) \, dx \\
&\geq - \rho \Vert \nabla \bn \Vert_{L^\infty(\Omega^\Gamma_\Theta)} \Vert \bn_h - \bn \Vert_{L^\infty(\Omega^\Gamma_\Theta)}^2 \Vert \mu_h \Vert_{L^2(\Omega^\Gamma_\Theta)} \Vert \bn_h \cdot \nabla \mu_h \Vert_{L^2(\Omega^\Gamma_\Theta)} \\
&\gtrsim - h^{2k_g} \Vert \mu_h \Vert_{L^2(\Omega^\Gamma_\Theta)} \rho \Vert \bn_h \cdot \nabla \mu_h \Vert_{L^2(\Omega^\Gamma_\Theta)} \\
&\overset{ \eqref{HH3} }{\gtrsim} - h^{2k_g} \Vert \mu_h \Vert_{M} \rho^\frac12\Vert \bn_h \cdot \nabla \mu_h \Vert_{L^2(\Omega^\Gamma_\Theta)}  \gtrsim - h^{2k_g} \Vert \mu_h \Vert_{M}^2.
\end{align*}
Combined with \eqref{eqinfsup1}, \eqref{eqinfsup2} and \eqref{eqinfsup3} we obtain
\begin{equation*}
b_h(\mu_h \bn, \mu_h)
\gtrsim \left( 1- \tilde{c} h^{2k_g} \right) \Vert \mu_h \Vert_{M}^2
\gtrsim \Vert \mu_h \Vert_{M}^2, 
\end{equation*}
provided $h$ is sufficiently small.
\end{proof}

\begin{lemma} \label{lemmainfsup2}
 Take $\mu_h \in V_{h,\Theta}^{m}$ and define $\bv_h := I^m_\Theta(\mu_h \bn) \in \bV_{h,\Theta}^{m}$. The following inequality holds:
\begin{equation*}
\Vert \bv_h \Vert_{A_h^L} \lesssim \Vert \mu_h \Vert_{M}.
\end{equation*}
\end{lemma}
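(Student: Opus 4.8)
The guiding observation is the continuous computation recalled just before Lemma~\ref{lemmainfsup1}: a purely normal field $\mu\bn$ has controlled strain because its tangential part vanishes and $E(\mu\bn)=\mu\bH$. The plan is to transfer this to the discrete, interpolated field. I would set $\bw:=\mu_h\bn$ and $\be:=\bv_h-\bw=I^m_\Theta(\mu_h\bn)-\mu_h\bn$, so that $\bv_h=\bw+\be$, and then bound the three constituents of $\Vert\bv_h\Vert_{A_h^L}^2=\Vert E_h(\bv_h)\Vert_{L^2(\Gamma_h)}^2+\Vert\bv_h\Vert_{L^2(\Gamma_h)}^2+\rho\Vert\nabla\bv_h\bn_h\Vert_{L^2(\Omega_{\Theta}^{\Gamma})}^2$ separately, treating $\bw$ by structural identities and $\be$ by a superapproximation argument.

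The technical engine is a superapproximation estimate for the interpolation error of the product $\mu_h\bn$. Since $\mu_h\circ\Theta_h$ is a polynomial of degree $m$ and $\bn$ is smooth, a Leibniz expansion combined with the finite element inverse inequality gives $\Vert\mu_h\bn\Vert_{H^{m+1}(\Theta_h(T))}\lesssim h^{-m}\Vert\mu_h\Vert_{L^2(\Theta_h(T))}$ (the top-order derivative of $\mu_h$ vanishes, and the dominant surviving term loses only $m$ powers of $h$). Feeding this into \eqref{eqinterpolationerror} with $k=m$ yields, for $l=0,1,2$, the bound $\Vert\be\Vert_{H^l(\Theta_h(T))}\lesssim h^{1-l}\Vert\mu_h\Vert_{L^2(\Theta_h(T))}$, i.e. $\Vert\be\Vert_{L^2}\lesssim h\Vert\mu_h\Vert_{L^2}$, $\Vert\nabla\be\Vert_{L^2}\lesssim\Vert\mu_h\Vert_{L^2}$ and $\Vert\nabla^2\be\Vert_{L^2}\lesssim h^{-1}\Vert\mu_h\Vert_{L^2}$ on each $\Theta_h(T)$.

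With these in hand the three terms go as follows. For the mass term I would use the trace estimate \eqref{eqtraceestimate} on $\be$ with the bounds above and \eqref{HH3} to obtain $\Vert\be\Vert_{L^2(\Gamma_h)}\lesssim h\Vert\mu_h\Vert_M$, while $\Vert\bw\Vert_{L^2(\Gamma_h)}=\Vert\mu_h\Vert_{L^2(\Gamma_h)}\le\Vert\mu_h\Vert_M$. For the strain term I use $\nabla\bw=\bn(\nabla\mu_h)^T+\mu_h\bH$ and $E_h(\bw)=\frac12\bP_h\big(\nabla\bw+(\nabla\bw)^T\big)\bP_h$; the crucial point is $\bP_h\bn=\bP_h(\bn-\bn_h)$, so $\Vert\bP_h\bn\Vert\lesssim h^{k_g}$ by Lemma~\ref{lemmanormals}, which together with the inverse inequality \eqref{HH4} makes the $\bn(\nabla\mu_h)^T$-contribution $\lesssim h^{k_g-1}\Vert\mu_h\Vert_M\lesssim\Vert\mu_h\Vert_M$, whereas the $\mu_h\bH$-contribution is bounded by $\Vert\mu_h\Vert_{L^2(\Gamma_h)}$. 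The error part is absorbed via $\Vert E_h(\be)\Vert_{L^2(\Gamma_h)}\le\Vert\nabla\be\Vert_{L^2(\Gamma_h)}$, which the superapproximation bounds, \eqref{eqtraceestimate} and \eqref{HH3} render $\lesssim\Vert\mu_h\Vert_M$. For the stabilization term I decompose $\nabla\bw\,\bn_h=(\bn_h\cdot\nabla\mu_h)\bn+\mu_h\bH\bn_h$; since $\bH\bn=0$ we have $\bH\bn_h=\bH(\bn_h-\bn)$, hence $\Vert\mu_h\bH\bn_h\Vert\lesssim h^{k_g}\Vert\mu_h\Vert$, and the leading term is exactly the quantity weighted by $\rho$ in the $M$-norm, so $\rho\Vert(\bn_h\cdot\nabla\mu_h)\bn\Vert_{L^2(\Omega_{\Theta}^{\Gamma})}^2\le\Vert\mu_h\Vert_M^2$. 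The $\be$-piece and the geometric remainders are closed using \eqref{HH3} and $\rho\lesssim h^{-1}$ from \eqref{choicerho}.

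The main obstacle is the strain term. A naive bound $\Vert E_h(\bv_h)\Vert\le\Vert\nabla\bv_h\Vert_{L^2(\Gamma_h)}$ is fatal, since $\Vert\nabla\bv_h\Vert_{L^2(\Gamma_h)}$ carries a factor $h^{-1}$ relative to $\Vert\mu_h\Vert_M$. Success hinges on exploiting the normal structure: the full-gradient term $\bn(\nabla\mu_h)^T$ survives only through $\bP_h\bn=O(h^{k_g})$, and this geometric smallness is precisely what compensates the inverse-inequality loss. The companion subtlety is to arrange the estimates so that a tangential derivative of $\mu_h$ is never controlled by brute force, but is always either extracted as the $M$-norm quantity $\bn_h\cdot\nabla\mu_h$ or paired with an $O(h^{k_g})$ geometric factor.
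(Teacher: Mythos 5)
Your proposal is correct and follows essentially the same route as the paper's proof: split $\bv_h$ into $\mu_h\bn$ plus the interpolation error, exploit the normal structure ($\bP_h\bn=(\bP_h-\bP)\bn=O(h^{k_g})$ and $\nabla\bn\,\bn=0$) together with the inverse inequality \eqref{HH4} for the first part, and a superapproximation/inverse-estimate chain $\Vert\mu_h\bn\Vert_{H^{m+1}(\Theta_h(T))}\lesssim h^{-m}\Vert\mu_h\Vert_{L^2(\Theta_h(T))}$ combined with \eqref{eqinterpolationerror}, \eqref{eqtraceestimate} and \eqref{HH3} for the error part. The only difference is organizational (you split the three terms of the norm first, the paper applies the triangle inequality to the whole $A_h^L$-norm first), which changes nothing of substance.
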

\begin{proof}
Using the triangle inequality we get
\begin{equation} \label{eqinfsup4}
\Vert \bv_h \Vert_{A_h^L} \leq \Vert \mu_h \bn \Vert_{A_h^L} + \Vert  I^m_\Theta(\mu_h \bn) -  \mu_h \bn  \Vert_{A_h^L}.
\end{equation}
We estimate the two terms on the right hand side. The definition of the norm implies
\begin{equation} \label{eqinfsup5}
\Vert \mu_h \bn \Vert_{A_h^L}^2 = a_h(\mu_h \bn,\mu_h \bn) + s_h(\mu_h \bn,\mu_h \bn).
\end{equation}
The first term can be bounded by
\begin{equation}  \label{eqinfsup6} \begin{split}
& a_h(\mu_h \bn,\mu_h \bn)^{\frac{1}{2}} \lesssim \Vert \gradGh (\mu_h \bn) + \gradGh^T (\mu_h \bn) \Vert_{L^2(\Gamma_h)} + \Vert \mu_h \bn \Vert_{L^2(\Gamma_h)} \\
&\lesssim \Vert \gradGh (\mu_h \bn)  \Vert_{L^2(\Gamma_h)} + \Vert \mu_h \bn \Vert_{L^2(\Gamma_h)} \\ 
& \lesssim \Vert \bP_h (\bn  (\nabla \mu_h)^T  + \mu_h \nabla \bn) \bP_h  \Vert_{L^2(\Gamma_h)} + \Vert \mu_h \Vert_{L^2(\Gamma_h)} \\
&\lesssim \Vert (\bP_h - \bP) \bn  (\nabla \mu_h)^T \bP_h \Vert_{L^2(\Gamma_h)}  + \Vert \bP_h \mu_h \nabla \bn \bP_h  \Vert_{L^2(\Gamma_h)} + \Vert \mu_h \Vert_{M} \\ 
&\lesssim h^{k_g} \Vert \nabla \mu_h \Vert_{L^2(\Gamma_h)} + \Vert \mu_h \Vert_{M} \overset{ \eqref{HH4} }{\lesssim} (h^{k_g-1}+1)\|\mu_h\|_M\lesssim \|\mu_h\|_M.
\end{split}
\end{equation}
For the second term on the right hand side of equation \eqref{eqinfsup5} we get
\begin{align*}
s_h(\mu_h \bn,\mu_h \bn)^\frac{1}{2} &= \rho^\frac{1}{2} \Vert \nabla (\mu_h \bn) \bn_h \Vert_{L^2(\Omega_\Theta^\Gamma)} \\
&\leq \rho^\frac{1}{2} \left( \Vert \bn (\nabla \mu_h)\cdot \bn_h \Vert_{L^2(\Omega_\Theta^\Gamma)} + \Vert  \mu_h \nabla \bn \bn_h \Vert_{L^2(\Omega_\Theta^\Gamma)} \right) \\
&\overset{\nabla \bn \bn = 0}{\lesssim} \rho^\frac{1}{2} \left( \Vert \bn_h \cdot \nabla \mu_h \Vert_{L^2(\Omega_\Theta^\Gamma)} + \Vert  \mu_h \nabla \bn (\bn_h - \bn) \Vert_{L^2(\Omega_\Theta^\Gamma)} \right) \\
&\lesssim \Vert \mu_h \Vert_{M} + \rho^\frac{1}{2} h^{k_g} \Vert  \mu_h \Vert_{L^2(\Omega_\Theta^\Gamma)} 
\overset{\eqref{HH3}}{\lesssim} \Vert  \mu_h \Vert_{M}.
\end{align*}
Combining this with \eqref{eqinfsup6} we obtain
\begin{equation} \label{eqinfsup7}
\Vert \mu_h \bn \Vert_{A_h^L} \lesssim  \Vert  \mu_h \Vert_{M}.
\end{equation}
We now  consider the second term of the right hand side  \eqref{eqinfsup4}. Using $\vert \mu_h \vert_{H^{m+1}(\Theta_h(T))} = 0$ for all $T \in \mathcal{T}_h^\Gamma$ and componentwise the interpolation result \eqref{eqinterpolationerror} we get
\begin{align}
&\Vert  I^m_\Theta(\mu_h \bn) -  \mu_h \bn  \Vert_{A_h^L}^2 \nonumber  \\
&\hspace*{4.8mm}\lesssim \Vert \nabla( I^m_\Theta(\mu_h \bn) -  \mu_h \bn ) \Vert_{L^2(\Gamma_h)}^2 + \Vert  I^m_\Theta(\mu_h \bn) -  \mu_h \bn  \Vert_{L^2(\Gamma_h)}^2 \label{eqinfsup8}\\
&\hspace*{9.6mm}+ \rho \Vert \nabla( I^m_\Theta(\mu_h \bn) -  \mu_h \bn ) \bn_h \Vert_{L^2(\Omega_\Theta^\Gamma)}^2  \nonumber \\
&\hspace*{4.8mm}= \sum_{T \in \mathcal{T}_h^\Gamma} \Big( \Vert \nabla( I^m_\Theta(\mu_h \bn) -  \mu_h \bn ) \Vert_{L^2(\Gamma_T)}^2 + \Vert  I^m_\Theta(\mu_h \bn) -  \mu_h \bn  \Vert_{L^2(\Gamma_T)}^2 \nonumber \\
&\hspace*{9.6mm}+ \rho \Vert \nabla( I^m_\Theta(\mu_h \bn) -  \mu_h \bn ) \bn_h \Vert_{L^2(\Theta_h(T))}^2 \Big) \nonumber \\
&\hspace*{4.8mm}\overset{\eqref{eqtraceestimate}}{\lesssim} \sum_{T \in \mathcal{T}_h^\Gamma}
 \Big(h^{-1} \Vert  I^m_\Theta(\mu_h \bn) -  \mu_h \bn \Vert_{L^2(\Theta_h(T))}^2  \nonumber \\
&\hspace*{9.6mm} +
 (h^{-1} +h +\rho) \Vert  I^m_\Theta(\mu_h \bn) -  \mu_h \bn \Vert_{H^1(\Theta_h(T))}^2 + h \Vert  I^m_\Theta(\mu_h \bn) -  \mu_h \bn \Vert_{H^2(\Theta_h(T))}^2 \Big) \nonumber \\
&\hspace*{4.8mm}\lesssim \sum_{T \in \mathcal{T}_h^\Gamma} h^{2m-1}\Vert \mu_h \bn \Vert_{H^{m+1}(\Theta_h(T))}^2 
\lesssim \sum_{T \in \mathcal{T}_h^\Gamma} h^{2m-1}\Vert \mu_h  \Vert_{H^{m}(\Theta_h(T))}^2  \nonumber \\
&\hspace*{4.8mm}\overset{\text{inv. ineq.}}{\lesssim} \sum_{T \in \mathcal{T}_h^\Gamma} h^{-1}  \Vert \mu_h  \Vert_{L^2(\Theta_h(T))}^2 
\lesssim h^{-1}  \Vert \mu_h  \Vert_{L^2(\Omega_\Theta^\Gamma)}^2 
\overset{ \eqref{HH3}}{\lesssim}  \Vert \mu_h  \Vert_{M}^2. \nonumber 
\end{align} 
Combining this with  \eqref{eqinfsup4} and \eqref{eqinfsup7} we get the bound
$
\Vert \bv_h \Vert_{A_h^L} \lesssim \Vert \mu_h  \Vert_{M}$.
\end{proof}

Using these results one easily obtains the following discrete inf-sup property for $b_h(\cdot,\cdot)$.
\begin{lemma} \label{lemdiscreteinfsup}
Take $m \geq 1$. There exists a constant $c>0$, independent of $h$ and of how $\Gamma$  intersects the outer triangulation, such that, for $h$ sufficiently small
\begin{equation} \label{eqdiscreteinfsupcondition}
\sup_{\bv_h \in \bV_{h,\Theta}^{m}} \frac{b_h(\bv_h,\mu_h)}{\Vert \bv_h \Vert_{A_h^L}} \gtrsim \left( 1 - c \sqrt{\rho h} \right) \Vert \mu_h \Vert_{M} \qquad \text{for all } \mu_h \in V_{h,\Theta}^{m}.
\end{equation}
\end{lemma}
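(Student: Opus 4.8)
The plan is to use the Fortin-type construction previewed just before Lemma~\ref{lemmainfsup1}: for a given $\mu_h \in V_{h,\Theta}^{m}$ the natural supremizer is $\mu_h \bn$, but this is not a finite element function, so I would instead take the test function $\bv_h := I^m_\Theta(\mu_h \bn) \in \bV_{h,\Theta}^{m}$ supplied by Lemma~\ref{lemmainfsup2} and measure how far $b_h(\bv_h,\mu_h)$ is from $b_h(\mu_h\bn,\mu_h)$. Writing $\bw_h := \bv_h - \mu_h \bn$ for the componentwise interpolation error, I would split
\[
 b_h(\bv_h,\mu_h) = b_h(\mu_h\bn,\mu_h) + b_h(\bw_h,\mu_h),
\]
bound the first term from below by Lemma~\ref{lemmainfsup1}, control the perturbation $b_h(\bw_h,\mu_h)$ from above, and finally divide by $\Vert \bv_h\Vert_{A_h^L}$, which Lemma~\ref{lemmainfsup2} already bounds by $\Vert \mu_h\Vert_M$.

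The heart of the argument, and the step I expect to be the main obstacle, is showing that the perturbation is genuinely small, namely $|b_h(\bw_h,\mu_h)| \lesssim \sqrt{\rho h}\,\Vert \mu_h\Vert_M^2$. The crude continuity bound $b_h(\bw_h,\mu_h) \le \Vert \bw_h\Vert_{A_h^L}\Vert\mu_h\Vert_M$ combined with the estimate $\Vert\bw_h\Vert_{A_h^L}\lesssim\Vert\mu_h\Vert_M$ (already implicit in \eqref{eqinfsup8}) only yields $\lesssim \Vert\mu_h\Vert_M^2$, which is not small enough to produce the factor $(1-c\sqrt{\rho h})$; hence I would estimate the two pieces of $b_h(\bw_h,\mu_h) = (\bw_h\cdot\bn_h,\mu_h)_{L^2(\Gamma_h)} + \tilde s_h(\bw_h,\mu_h)$ directly. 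For the interpolation error I would invoke \eqref{eqinterpolationerror} together with $|\mu_h|_{H^{m+1}(\Theta_h(T))}=0$, the smoothness of $\bn$, and a finite element inverse inequality to obtain the elementwise bounds $\Vert\bw_h\Vert_{L^2(\Theta_h(T))}\lesssim h\Vert\mu_h\Vert_{L^2(\Theta_h(T))}$ and $\Vert\nabla\bw_h\Vert_{L^2(\Theta_h(T))}\lesssim\Vert\mu_h\Vert_{L^2(\Theta_h(T))}$. Feeding these into the trace estimate \eqref{eqtraceestimate} and summing gives $\Vert\bw_h\Vert_{L^2(\Gamma_h)}\lesssim h^{\frac12}\Vert\mu_h\Vert_{L^2(\Omega_\Theta^\Gamma)}$ and $\Vert\nabla\bw_h\bn_h\Vert_{L^2(\Omega_\Theta^\Gamma)}\lesssim\Vert\mu_h\Vert_{L^2(\Omega_\Theta^\Gamma)}$, after which \eqref{HH3} converts each $L^2(\Omega_\Theta^\Gamma)$-norm of $\mu_h$ into $h^{\frac12}\Vert\mu_h\Vert_M$.

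Assembling the two pieces, the $L^2(\Gamma_h)$-term is $\lesssim h\,\Vert\mu_h\Vert_M^2$, while for the stabilization term I would use $\tilde\rho=\rho$ and the elementary estimate $\rho^{\frac12}\Vert\bn_h\cdot\nabla\mu_h\Vert_{L^2(\Omega_\Theta^\Gamma)}\le\Vert\mu_h\Vert_M$ to get $|\tilde s_h(\bw_h,\mu_h)| \lesssim \rho\cdot h^{\frac12}\Vert\mu_h\Vert_M\cdot\rho^{-\frac12}\Vert\mu_h\Vert_M = \sqrt{\rho h}\,\Vert\mu_h\Vert_M^2$. Since the choice \eqref{choicerho} forces $h\lesssim\sqrt{\rho h}$, the perturbation is $\lesssim\sqrt{\rho h}\,\Vert\mu_h\Vert_M^2$, whence $b_h(\bv_h,\mu_h)\ge (1-c\sqrt{\rho h})\Vert\mu_h\Vert_M^2$. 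Dividing by $\Vert\bv_h\Vert_{A_h^L}\lesssim\Vert\mu_h\Vert_M$ and taking the supremum over $\bv_h\in\bV_{h,\Theta}^{m}$ yields \eqref{eqdiscreteinfsupcondition}. The independence of $c$ from $h$ and from the position of $\Gamma$ in the background mesh is inherited directly from the corresponding independence in Lemmas~\ref{lemmainfsup1} and \ref{lemmainfsup2}, in \eqref{HH3}, and in the trace and inverse inequalities used above.
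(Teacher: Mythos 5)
Your proposal is correct and follows essentially the same route as the paper's proof: the same test function $\bv_h = I^m_\Theta(\mu_h\bn)$, the same splitting into $b_h(\mu_h\bn,\mu_h)$ plus the interpolation perturbation, and the same reliance on Lemmas~\ref{lemmainfsup1} and~\ref{lemmainfsup2} together with the interpolation, trace and inverse estimates of \eqref{eqinfsup8}. The only cosmetic difference is that you bound the two pieces of $b_h(\bw_h,\mu_h)$ separately, whereas the paper applies a single Cauchy--Schwarz in the combined norm and then shows that $\Vert \bw_h\Vert_{L^2(\Gamma_h)}^2 + \rho\Vert\nabla\bw_h\bn_h\Vert_{L^2(\Omega_\Theta^\Gamma)}^2 \lesssim \rho h\,\Vert\mu_h\Vert_M^2$; both yield the identical $\sqrt{\rho h}$ factor.
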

\begin{proof}
Take $\mu_h \in V_{h,\Theta}^{m}$ and define $\bv_h := I^m_\Theta(\mu_h \bn) \in \bV_{h,\Theta}^{m}$. Using Lemma \ref{lemmainfsup1} we get
\begin{align*}
 & \vert b_h(\bv_h,\mu_h) \vert \geq \vert b_h(\mu_h \bn,\mu_h) \vert - \vert b_h(I^m_\Theta(\mu_h \bn) - \mu_h \bn,\mu_h) \vert \\
&\gtrsim \Vert \mu_h \Vert_{M}^2 - \vert b_h(I^m_\Theta(\mu_h \bn) - \mu_h \bn,\mu_h) \vert \\
&\gtrsim \Vert \mu_h \Vert_{M}^2 - \Big( \Vert  I^m_\Theta(\mu_h \bn) -  \mu_h \bn  \Vert_{L^2(\Gamma_h)}^2 
+ \rho \Vert \nabla( I^m_\Theta(\mu_h \bn) -  \mu_h \bn ) \bn_h \Vert_{L^2(\Omega_\Theta^\Gamma)}^2 \Big)^\frac{1}{2} \Vert \mu_h \Vert_{M}.
\end{align*}
Following the estimates used in \eqref{eqinfsup8} one obtains
\begin{equation*} 
\Vert  I^m_\Theta(\mu_h \bn) -  \mu_h \bn  \Vert_{L^2(\Gamma_h)}^2 + \rho \Vert \nabla( I^m_\Theta(\mu_h \bn) -  \mu_h \bn ) \bn_h \Vert_{L^2(\Omega_\Theta^\Gamma)}^2 
\lesssim  \rho h \Vert \mu_h \Vert_{M}^2.
\end{equation*}
Combining these results with Lemma \ref{lemmainfsup2} we get
\begin{equation*}
\sup_{\bv_h \in \bV_{h,\Theta}^{m}} \frac{b_h(\bv_h,\mu_h)}{\Vert \bv_h \Vert_{A_h^L}} \gtrsim \left( 1 - c  \sqrt{\rho h} \right) \Vert \mu_h \Vert_{M} \qquad \text{for all } \mu_h \in V_{h,\Theta}^{m},
\end{equation*}
which completes the proof.
\end{proof}


\begin{corollary} \label{corollarydiscreteinfsup}
Take $m\geq 1$. Consider $\rho = c_\alpha h^{1-\alpha}$, $\alpha \in [0,2]$ and assume $h \leq h_0 \leq 1$. Take $c_\alpha$ such that $0 < c_\alpha < c^{-2} h_0^{\alpha-2}$ with $c$ as in \eqref{eqdiscreteinfsupcondition}. 
Then there exists a constant $d>0$, independent of $h$ and of how $\Gamma$ intersects the outer triangulation, such that:
\begin{equation*}
\sup_{\bv_h \in \bV_{h,\Theta}^{m}} \frac{b_h(\bv_h,\mu_h)}{\Vert \bv_h \Vert_{A_h^L}} \geq d  \Vert \mu_h \Vert_{M} \qquad \text{for all } \mu_h \in V_{h,\Theta}^{m}.
\end{equation*}
\end{corollary}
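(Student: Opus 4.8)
The plan is to convert the $h$-dependent lower bound $\bigl(1 - c\sqrt{\rho h}\bigr)$ furnished by Lemma~\ref{lemdiscreteinfsup} into a strictly positive constant that is \emph{uniform} in $h$, by exploiting the prescribed scaling $\rho = c_\alpha h^{1-\alpha}$ together with the restriction $\alpha \in [0,2]$ and the smallness condition imposed on $c_\alpha$. Everything reduces to controlling the single quantity $\sqrt{\rho h}$.

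First I would substitute the scaling into that quantity: $\rho h = c_\alpha h^{2-\alpha}$, whence $\sqrt{\rho h} = \sqrt{c_\alpha}\, h^{1-\alpha/2}$. Because $\alpha \in [0,2]$, the exponent $1-\alpha/2$ lies in $[0,1]$ and is therefore nonnegative; combined with the assumption $h \leq h_0 \leq 1$ this yields the monotonicity estimate $h^{1-\alpha/2} \leq h_0^{1-\alpha/2}$. Consequently $c\sqrt{\rho h} \leq c\sqrt{c_\alpha}\, h_0^{1-\alpha/2} =: q$, a number that depends only on $c$, $c_\alpha$, $h_0$ and $\alpha$, and in particular is independent of $h$.

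Next I would verify that the hypothesis $0 < c_\alpha < c^{-2} h_0^{\alpha-2}$ is precisely what forces $q < 1$. Indeed $q<1$ is equivalent to $\sqrt{c_\alpha} < c^{-1} h_0^{\alpha/2 - 1}$, and squaring (both sides being positive) gives exactly $c_\alpha < c^{-2} h_0^{\alpha-2}$. Hence $1 - c\sqrt{\rho h} \geq 1 - q > 0$ for all admissible $h \leq h_0$. Feeding this uniform positive lower bound back into \eqref{eqdiscreteinfsupcondition} and absorbing the hidden constant of the $\gtrsim$-relation (which is itself independent of $h$ and of how $\Gamma$ meets the triangulation), one obtains the claimed inf-sup inequality with a constant $d$ proportional to $(1-q)>0$.

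I do not expect a genuine obstacle here: the statement is a direct algebraic corollary of Lemma~\ref{lemdiscreteinfsup}, and no new analytic ingredient is needed. The only points demanding care are checking the sign of the exponent $1-\alpha/2$ (which is exactly why the upper restriction $\alpha \leq 2$ enters, and why the lower restriction $\alpha \geq 0$ keeps $\rho$ within the admissible range \eqref{choicerho}), and keeping careful track of which constant $c$ is being used, namely the one appearing in \eqref{eqdiscreteinfsupcondition}. Once those bookkeeping matters are settled, the equivalence between the threshold on $c_\alpha$ and the condition $q<1$ is immediate.
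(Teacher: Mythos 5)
Your argument is correct and is exactly the intended one: the paper states this as an immediate corollary of Lemma~\ref{lemdiscreteinfsup} without writing out a proof, and the substitution $\sqrt{\rho h}=\sqrt{c_\alpha}\,h^{1-\alpha/2}\leq\sqrt{c_\alpha}\,h_0^{1-\alpha/2}$ together with the observation that the hypothesis on $c_\alpha$ is equivalent to $c\sqrt{c_\alpha}\,h_0^{1-\alpha/2}<1$ is precisely the computation the authors have in mind. No gaps.
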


\begin{assumption} \label{assumptionrhoL}
We restrict to $\rho = c_\alpha h^{1-\alpha}$, $\alpha \in [0,2]$,  with $c_\alpha$ as in Corollary \ref{corollarydiscreteinfsup}. 
\end{assumption}

\begin{corollary}
Under Assumption \ref{assumptionrhoL} the discrete inf-sup property for $b_h(\cdot,\cdot)$ holds for the pair of spaces $(\bV_{h,\Theta}^{k},V_{h,\Theta}^{k_l})$ with $1\leq k_l \leq k$. The constant in the discrete inf-sup property estimate depends on $k_l$ but is independent of $h$ and of how $\Gamma$ intersects the outer triangulation.
\end{corollary}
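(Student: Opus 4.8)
The plan is to deduce this directly from Corollary~\ref{corollarydiscreteinfsup} by a subspace-monotonicity argument, exploiting that the multiplier degree $k_l$ does not exceed the velocity degree $k$. No new analysis is needed: the equal-order inf-sup estimate already established is the engine, and the only new ingredient is the inclusion of the lower-order spaces.

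First I would record the key inclusion. Since $\mathbb{P}_{k_l}\subset\mathbb{P}_k$ on each tetrahedron and the mesh transformation $\Theta_h=\Theta_h^{k_g}$ entering the definition of the parametric spaces is the \emph{same} for both degrees (it is governed by $k_g$ only), the defining relations for $V_{h,\Theta}^{k}$ give $V_{h,\Theta}^{k_l}\subseteq V_{h,\Theta}^{k}$ and hence, componentwise, $\bV_{h,\Theta}^{k_l}\subseteq\bV_{h,\Theta}^{k}$. Note also that the forms $A_h^L(\cdot,\cdot)$ and $b_h(\cdot,\cdot)$ and the norms $\Vert\cdot\Vert_{A_h^L}$, $\Vert\cdot\Vert_M$ are the same objects in both settings, since they are defined on $\bV_{reg,h}\supset\bV_{h,\Theta}^{k}$ and $V_{reg,h}\supset V_{h,\Theta}^{k}$ irrespective of the discrete degree.

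Next I would apply Corollary~\ref{corollarydiscreteinfsup} with $m=k_l$. This is legitimate because the hypothesis $1\le k_l\le k$ guarantees $m=k_l\ge 1$, and Assumption~\ref{assumptionrhoL} fixes the admissible scaling of $\rho$. It yields a constant $d=d(k_l)>0$, independent of $h$ and of how $\Gamma$ cuts the background mesh, such that
\[
\sup_{\bv_h\in\bV_{h,\Theta}^{k_l}}\frac{b_h(\bv_h,\mu_h)}{\Vert\bv_h\Vert_{A_h^L}}\ge d\,\Vert\mu_h\Vert_M\qquad\text{for all }\mu_h\in V_{h,\Theta}^{k_l}.
\]
Because $\bV_{h,\Theta}^{k_l}\subseteq\bV_{h,\Theta}^{k}$, the supremum over the larger test space dominates the supremum over the smaller one, so
\[
\sup_{\bw_h\in\bV_{h,\Theta}^{k}}\frac{b_h(\bw_h,\mu_h)}{\Vert\bw_h\Vert_{A_h^L}}\ge\sup_{\bv_h\in\bV_{h,\Theta}^{k_l}}\frac{b_h(\bv_h,\mu_h)}{\Vert\bv_h\Vert_{A_h^L}}\ge d\,\Vert\mu_h\Vert_M
\]
for all $\mu_h\in V_{h,\Theta}^{k_l}$, which is precisely the asserted inf-sup estimate for the pair $(\bV_{h,\Theta}^{k},V_{h,\Theta}^{k_l})$.

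There is no genuine analytic obstacle here; the statement is essentially a corollary of the equal-order result. The only point requiring care is the claimed dependence of the constant on $k_l$. This dependence enters through the concrete supremizer $\bv_h=I^{k_l}_\Theta(\mu_h\bn)$ used to prove Corollary~\ref{corollarydiscreteinfsup}: the interpolation bound \eqref{eqinterpolationerror} and the finite element inverse inequality invoked in Lemma~\ref{lemmainfsup2}, as well as the estimate of Lemma~\ref{lemmainfsup1}, all carry constants that degrade with the polynomial degree $m=k_l$. Crucially, however, none of these constants depends on $h$ or on the cut position, so the final constant $d(k_l)$ inherits exactly the dependence stated in the corollary.
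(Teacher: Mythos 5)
Your proposal is correct and coincides with the paper's (implicit) reasoning: the paper states this corollary without a written proof, and the intended argument is exactly your observation that Corollary~\ref{corollarydiscreteinfsup} applied with $m=k_l$ provides a supremizer $I^{k_l}_\Theta(\mu_h\bn)\in\bV_{h,\Theta}^{k_l}\subseteq\bV_{h,\Theta}^{k}$, so enlarging the test space can only increase the supremum. Your remarks on the nesting of the parametric spaces (same $\Theta_h^{k_g}$ for both degrees) and on the $k_l$-dependence of the constant are accurate and complete.
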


From the fact that $A_h^L(\cdot,\cdot)$ defines a scalar product on $\bV_{h,\Theta}^{k}$, cf. Lemma~\ref{Ahscalarproduct}, and the discrete inf-sup property of $b_h(\cdot,\cdot)$ on $\bV_{h,\Theta}^{k} \times V_{h,\Theta}^{k_l}$ it follows that problem \eqref{discretelagrangeform} has a unique solution. Note that to show the discrete inf-sup property of $b_h(\cdot,\cdot)$ the stabilization $\tilde{s}_h(\cdot,\cdot)$ is essential.   

\subsection{Strang-Lemmas}

As usual, the discretization error analysis is based on a Strang Lemma which bounds the discretization error in terms of an approximation error and a consistency error.
We derive such Strang lemmas for the three discrete problems  \eqref{discretepenaltyform1}, \eqref{discretepenaltyform2} and \eqref{discretelagrangeform}. We first treat \eqref{discretepenaltyform1} and \eqref{discretepenaltyform2}.
\begin{theorem} \label{stranglemma}
For the unique solution $\bu=\bu_T^\ast\in \bV_T$ of problem \eqref{contform} and the unique solution $\bu_h \in \bV_{h,\Theta}^{k}$ of problem \eqref{discretepenaltyform1} respectively \eqref{discretepenaltyform2} the following discretization error bound holds for $i = 1,2$:
\begin{equation} \label{ineqstrang} \begin{split}
\Vert \bu^e - \bu_h \Vert_{A_h^{P_i}} &\leq 2 \min_{\bv_h \in \bV_{h,\Theta}^{k}} \Vert \bu^e - \bv_h \Vert_{A_h^{P_i}} \\
&\hspace{4.8mm}+ \sup_{\bw_h \in \bV_{h,\Theta}^{k}} \frac{\vert A_h^{P_i}(\bu^e, \bw_h) - (\bbf_h, \bw_h)_{L^2(\Gamma_h)} \vert}{\Vert \bw_h \Vert_{A_h^{P_i}}}.
\end{split}
\end{equation}
\end{theorem}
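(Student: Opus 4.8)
The plan is to prove this as a standard (second) Strang lemma, exploiting the fact that, by Lemma~\ref{Ahscalarproduct}, the bilinear form $A_h^{P_i}(\cdot,\cdot)$ is symmetric, continuous, and coercive on $\bV_{h,\Theta}^{k}$, hence defines a genuine inner product there with associated norm $\|\cdot\|_{A_h^{P_i}}$. The single structural identity I would rely on is the duality characterization of this norm: for any $\bz \in \bV_{h,\Theta}^{k}$,
\begin{equation*}
\|\bz\|_{A_h^{P_i}} = \sup_{\bw_h \in \bV_{h,\Theta}^{k}} \frac{A_h^{P_i}(\bz,\bw_h)}{\|\bw_h\|_{A_h^{P_i}}},
\end{equation*}
where the upper bound comes from Cauchy--Schwarz (first estimate of Lemma~\ref{Ahscalarproduct}) and the lower bound from the admissible choice $\bw_h = \bz$.

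First I would fix an arbitrary $\bv_h \in \bV_{h,\Theta}^{k}$ and split by the triangle inequality
\begin{equation*}
\|\bu^e - \bu_h\|_{A_h^{P_i}} \leq \|\bu^e - \bv_h\|_{A_h^{P_i}} + \|\bv_h - \bu_h\|_{A_h^{P_i}}.
\end{equation*}
The first summand is already of approximation type. For the second, since $\bv_h - \bu_h \in \bV_{h,\Theta}^{k}$, I would apply the duality identity above and then insert $\pm\,\bu^e$ to write, for any test function $\bw_h$,
\begin{equation*}
A_h^{P_i}(\bv_h - \bu_h,\bw_h) = A_h^{P_i}(\bv_h - \bu^e,\bw_h) + \big(A_h^{P_i}(\bu^e,\bw_h) - A_h^{P_i}(\bu_h,\bw_h)\big).
\end{equation*}

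The decisive step is to replace $A_h^{P_i}(\bu_h,\bw_h)$ using the discrete equation: since $\bu_h$ solves \eqref{discretepenaltyform1} (resp.\ \eqref{discretepenaltyform2}) and $\bw_h \in \bV_{h,\Theta}^{k}$, we have $A_h^{P_i}(\bu_h,\bw_h) = (\bbf_h,\bw_h)_{L^2(\Gamma_h)}$, so the parenthesized term becomes exactly the consistency residual $A_h^{P_i}(\bu^e,\bw_h) - (\bbf_h,\bw_h)_{L^2(\Gamma_h)}$. Bounding $A_h^{P_i}(\bv_h - \bu^e,\bw_h)$ by Cauchy--Schwarz, dividing by $\|\bw_h\|_{A_h^{P_i}}$ and taking the supremum over $\bw_h$, the duality identity yields
\begin{equation*}
\|\bv_h - \bu_h\|_{A_h^{P_i}} \leq \|\bv_h - \bu^e\|_{A_h^{P_i}} + \sup_{\bw_h \in \bV_{h,\Theta}^{k}} \frac{|A_h^{P_i}(\bu^e,\bw_h) - (\bbf_h,\bw_h)_{L^2(\Gamma_h)}|}{\|\bw_h\|_{A_h^{P_i}}}.
\end{equation*}
Substituting into the triangle inequality produces the factor $2$ in front of $\|\bu^e - \bv_h\|_{A_h^{P_i}}$, and finally minimizing over $\bv_h \in \bV_{h,\Theta}^{k}$ (the consistency supremum does not depend on $\bv_h$) gives \eqref{ineqstrang}.

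I do not expect any genuine obstacle here: the argument is purely algebraic once coercivity and continuity of $A_h^{P_i}$ are in hand. The only point requiring a word of care is that the supremum defining the consistency error is taken over discrete functions $\bw_h$ only, which is exactly what is available, since Galerkin orthogonality against $\bbf_h$ holds solely for $\bw_h \in \bV_{h,\Theta}^{k}$; no conformity of $\bu^e$ is used, which is precisely why the residual $A_h^{P_i}(\bu^e,\bw_h) - (\bbf_h,\bw_h)_{L^2(\Gamma_h)}$ does not vanish and must be carried as the consistency term. The real work---actually estimating this term, which encodes the geometric error $\Gamma_h \approx \Gamma$ and, for $i=1$, the inconsistency of the penalty---is deferred to the later sections and is not part of this lemma.
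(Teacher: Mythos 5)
Your proposal is correct and follows essentially the same route as the paper: the triangle inequality split, the insertion of $\pm\,\bu^e$, the use of the discrete equation to turn $A_h^{P_i}(\bu_h,\bw_h)$ into $(\bbf_h,\bw_h)_{L^2(\Gamma_h)}$, and Cauchy--Schwarz from Lemma~\ref{Ahscalarproduct}. The only cosmetic difference is that the paper tests directly with $\bw_h=\bv_h-\bu_h$ and divides by its norm, whereas you phrase the same step as a duality characterization of $\|\cdot\|_{A_h^{P_i}}$ followed by a supremum; these are identical in substance.
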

\begin{proof}
The proof uses standard arguments. For an arbitrary $\bv_h \in  \bV_{h,\Theta}^{k}$  we have
\begin{equation} \label{eqtriang}
\Vert \bu^e - \bu_h \Vert_{A_h^{P_i}} \leq \Vert \bu^e - \bv_h \Vert_{A_h^{P_i}} + \Vert \bv_h - \bu_h \Vert_{A_h^{P_i}}.
\end{equation}
Using the definition of the norm and setting $\bw_h = \bv_h - \bu_h \in \bV_{h,\Theta}^{k}$  results in 
\begin{equation*} \begin{split}
\Vert \bv_h - \bu_h \Vert_{A_h^{P_i}}^2 &= A_h^{P_i}(\bv_h - \bu_h, \bv_h - \bu_h) = A_h^{P_i}(\bv_h - \bu_h, \bw_h)  \\
&\leq \vert A_h^{P_i}(\bv_h - \bu^e, \bw_h) \vert + \vert A_h^{P_i}(\bu^e - \bu_h, \bw_h) \vert \\
&\leq \Vert \bu^e - \bv_h \Vert_{A_h^{P_i}} \Vert \bw_h \Vert_{A_h^{P_i}} + \vert A_h^{P_i}(\bu^e, \bw_h) -  (\bbf_h, \bw_h)_{L^2(\Gamma_h)} \vert .
\end{split}
\end{equation*}
Dividing by $\Vert \bw_h \Vert_{A_h^{P_i}} = \Vert  \bv_h - \bu_h \Vert_{A_h^{P_i}}$ together with inequality \eqref{eqtriang} completes the proof.
\end{proof}

For the analysis of Problem \eqref{discretelagrangeform} we define the bilinear form
\begin{equation*}
\mathcal{A}_h((\bu, \lambda),(\bv, \mu)) := A_h^L(\bu,\bv) + b_h(\bv, \lambda) + b_h(\bu, \mu), \qquad (\bu, \lambda),(\bv, \mu) \in \bV_{reg,h} \times V_{reg,h}.
\end{equation*}
From the well-posedness of the discrete problem \eqref{discretelagrangeform} it follows that $\mathcal{A}_h( \cdot,\cdot)$ fulfills a discrete inf-sup property, i.e.
\begin{equation} \label{eqinfsupAh}
\sup_{(\bv_h,\mu_h)  \in \bV_{h,\Theta}^{k} \times V_{h,\Theta}^{k_l}} \frac{\mathcal{A}_h((\bu_h,\lambda_h), (\bv_h,\mu_h)) }{\left(\Vert \bv_h \Vert_{A_h^L}^2 + \Vert \mu_h \Vert_{M}^2\right)^\frac{1}{2}} \gtrsim \left(\Vert \bu_h \Vert_{A_h^L}^2 + \Vert \lambda_h \Vert_{M}^2\right)^\frac{1}{2}
\end{equation}
for all $(\bu_h,\lambda_h) \in \bV_{h,\Theta}^{k} \times V_{h,\Theta}^{k_l}$. This will be used for a proof of the following Strang Lemma.

\begin{theorem} \label{stranglemmalagrange}
Let $(\bu, \lambda) = (\bu_T^\ast, \lambda) \in \bV_T \times L^2(\Gamma)$ be the unique solution of problem \eqref{contformlagrange} with $\bg:=\bbf$,  and $(\bu_h, \lambda_h) \in \bV_{h,\Theta}^{k} \times V_{h,\Theta}^{k_l}$ the unique solution of the discrete problem \eqref{discretelagrangeform}. The following discretization error bound holds:
\begin{equation} \begin{split}
&\Vert \bu^e - \bu_{h} \Vert_{A_h^L} + \Vert \lambda^e - \lambda_{h} \Vert_{M} \\  
&\hspace*{4.8mm}\lesssim \min_{(\bv_h,\mu_h)  \in \bV_{h,\Theta}^{k} \times V_{h,\Theta}^{k_l}} \left( \Vert \bu^e - \bv_h \Vert_{A_h^L} + \Vert \lambda^e - \mu_{h} \Vert_{M}\right) \\
&\hspace{9.6mm}+ \sup_{(\bw_h,\xi_h)  \in \bV_{h,\Theta}^{k} \times V_{h,\Theta}^{k_l}} \frac{\vert \mathcal{A}_h((\bu^e,\lambda^e), (\bw_h,\xi_h)) - (\bbf_h, \bw_h)_{L^2(\Gamma_h)} \vert}{\left(\Vert \bw_h \Vert_{A_h^L}^2 + \Vert \xi_h \Vert_{M}^2\right)^\frac{1}{2}}.
\end{split}
\end{equation}
\end{theorem}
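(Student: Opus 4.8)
The plan is to prove this Strang-type estimate by the standard saddle-point argument based on the discrete inf-sup property \eqref{eqinfsupAh} of the combined bilinear form $\mathcal{A}_h(\cdot,\cdot)$, exactly paralleling the proof of Theorem~\ref{stranglemma} but now on the product space $\bV_{h,\Theta}^{k} \times V_{h,\Theta}^{k_l}$ with the combined norm $\big(\Vert \cdot \Vert_{A_h^L}^2 + \Vert \cdot \Vert_{M}^2\big)^\frac{1}{2}$. First I would fix arbitrary $(\bv_h,\mu_h) \in \bV_{h,\Theta}^{k} \times V_{h,\Theta}^{k_l}$ and use the triangle inequality to split
\begin{equation*}
\Vert \bu^e - \bu_h \Vert_{A_h^L} + \Vert \lambda^e - \lambda_h \Vert_{M} \leq \Vert \bu^e - \bv_h \Vert_{A_h^L} + \Vert \lambda^e - \mu_h \Vert_{M} + \Vert \bv_h - \bu_h \Vert_{A_h^L} + \Vert \mu_h - \lambda_h \Vert_{M},
\end{equation*}
so that it remains to bound the discrete-difference pair $(\bv_h - \bu_h, \mu_h - \lambda_h)$.

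Next I would estimate that discrete difference using the inf-sup property \eqref{eqinfsupAh}. Applying \eqref{eqinfsupAh} to $(\bu_h,\lambda_h):=(\bv_h - \bu_h,\mu_h - \lambda_h)$, there exists a test pair $(\bw_h,\xi_h) \in \bV_{h,\Theta}^{k} \times V_{h,\Theta}^{k_l}$ with unit combined norm such that
\begin{equation*}
\big(\Vert \bv_h - \bu_h \Vert_{A_h^L}^2 + \Vert \mu_h - \lambda_h \Vert_{M}^2\big)^\frac{1}{2} \lesssim \mathcal{A}_h\big((\bv_h - \bu_h, \mu_h - \lambda_h),(\bw_h,\xi_h)\big).
\end{equation*}
Then I would insert $\pm(\bu^e,\lambda^e)$ into the first argument and split the right-hand side into $\mathcal{A}_h((\bv_h - \bu^e, \mu_h - \lambda^e),(\bw_h,\xi_h))$ plus $\mathcal{A}_h((\bu^e - \bu_h, \lambda^e - \lambda_h),(\bw_h,\xi_h))$. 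The first of these is bounded by continuity of $\mathcal{A}_h$ (which follows from the continuity estimates for $A_h^L$ and $b_h$ in Lemma~\ref{Ahscalarproduct}) against $\Vert \bu^e - \bv_h \Vert_{A_h^L} + \Vert \lambda^e - \mu_h \Vert_{M}$, yielding the approximation term. For the second term, the crucial observation is that the discrete solution satisfies $\mathcal{A}_h((\bu_h,\lambda_h),(\bw_h,\xi_h)) = (\bbf_h,\bw_h)_{L^2(\Gamma_h)}$ by the very definition of problem \eqref{discretelagrangeform}; subtracting this from $\mathcal{A}_h((\bu^e,\lambda^e),(\bw_h,\xi_h))$ converts the term into precisely the consistency residual
\begin{equation*}
\mathcal{A}_h\big((\bu^e,\lambda^e),(\bw_h,\xi_h)\big) - (\bbf_h,\bw_h)_{L^2(\Gamma_h)},
\end{equation*}
which is bounded by the supremum appearing in the statement since $(\bw_h,\xi_h)$ has unit norm. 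Combining these two bounds gives control of the discrete difference, and taking the minimum over $(\bv_h,\mu_h)$ completes the argument.

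The proof is essentially routine once one trusts \eqref{eqinfsupAh}, and I expect no serious obstacle here; the only point requiring a little care is verifying that the Galerkin-orthogonality-type cancellation is legitimate, namely that one may replace $\mathcal{A}_h((\bu^e - \bu_h,\dots),\cdot)$ by the consistency residual. This hinges on the discrete equations \eqref{discretelagrangeform} holding for the chosen test functions $(\bw_h,\xi_h)$, which is automatic because $(\bw_h,\xi_h)$ lies in the discrete space; the genuinely hard analytic work — bounding the consistency residual itself, which encodes the geometry-approximation error from $\Gamma_h \approx \Gamma$ and the normal/Weingarten approximations — is deferred to the subsequent sections and does not enter this Strang Lemma.
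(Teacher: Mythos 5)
Your proof is correct and follows essentially the same route as the paper: the authors' proof is a two-line sketch stating that one formulates \eqref{discretelagrangeform} via $\mathcal{A}_h(\cdot,\cdot)$ on the product space and repeats the argument of Theorem~\ref{stranglemma} using the inf-sup property \eqref{eqinfsupAh} and continuity in the product norm, which is precisely the triangle-inequality/inf-sup/Galerkin-cancellation argument you spell out.
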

\begin{proof} The discretization \eqref{discretelagrangeform} can be formulated in terms of the bilinear form $\mathcal{A}_h(\cdot,\cdot)$ on the product space
$\bV_{h,\Theta}^{k} \times V_{h,\Theta}^{k_l}$. 
Using  the discrete inf-sup property \eqref{eqinfsupAh} and the continuity of $\mathcal{A}_h(\cdot,\cdot)$ with respect to the product norm $(\|\cdot\|_{A_h^L}^2 +\|\cdot\|_M^2)^\frac12$ one can apply the same arguments as in the proof of Theorem~\ref{stranglemma}. 
\end{proof}

In the following two sections we analyze the approximation errors and the consistency errors, which appear in the Strang lemmas above.\\

\subsection{Approximation error bounds}

In the following lemma we show approximation error bounds in the norms that occur in the Strang lemmas above.
\begin{lemma} \label{lemmaapproximationerror}
For $\bu \in H^{k+1}(\Gamma)^3$ and $\lambda \in H^{k_l+1}(\Gamma)$ the following approximation error bounds hold:
\begin{equation} \label{Eq1}
 \min_{\bv_h \in \bV_{h,\Theta}^{k}} \Vert \bu^e - \bv_h \Vert_{A_h^{P_i}} \lesssim  (h^{k}  + \eta^{\frac{1}{2}} h^{k+1}) \Vert \bu \Vert_{H^{k+1}(\Gamma)}, \quad i =1,2
\end{equation}
\begin{equation} \label{Eq2}
 \begin{split}
& \min_{(\bv_h,\mu_h) \in \bV_{h,\Theta}^{k}\times V_{h,\Theta}^{k_l}} \left( \Vert \bu^e - \bv_h \Vert_{A_h^L} + \Vert \lambda^e - \mu_h \Vert_{M} \right) \\
&\hspace*{4.8mm}\lesssim  h^{k}  \Vert \bu \Vert_{H^{k+1}(\Gamma)} + (h^{k_l+1} + \rho^{\frac{1}{2}} h^{k_l+\frac{1}{2}}) \Vert \lambda \Vert_{H^{k_l+1}(\Gamma)}.
\end{split}
\end{equation}
\end{lemma}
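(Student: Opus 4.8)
The plan is to exhibit explicit candidates for the two minima and to estimate each term of the corresponding norms separately. For \eqref{Eq1} I would take $\bv_h := I_{\Theta}^{k}\bu^e$, and for \eqref{Eq2} the pair $(\bv_h,\mu_h) := (I_{\Theta}^{k}\bu^e, I_{\Theta}^{k_l}\lambda^e)$; since the left-hand sides are minima, exhibiting good candidates suffices. Each of the squared norms $\Vert \cdot \Vert_{A_h^{P_i}}^2$, $\Vert \cdot \Vert_{A_h^L}^2$ and $\Vert \cdot \Vert_{M}^2$ is a sum of a part integrated over the surface $\Gamma_h$ and a part integrated over the volume strip $\Omega_{\Theta}^{\Gamma}$. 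Hence the whole proof reduces to bounding the interpolation errors of $\bu^e$ and $\lambda^e$ in (i) the componentwise $L^2(\Gamma_h)$- and tangential-gradient norms on $\Gamma_h$, and (ii) the $\rho$-weighted normal-derivative $L^2(\Omega_{\Theta}^{\Gamma})$-norm.

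For the surface contributions I would apply Lemma~\ref{lemmascalarapproximationerror} componentwise. In $a_h(\cdot,\cdot)$ and $a_{T,h}(\cdot,\cdot)$ the operators $E_h$ and $E_{T,h}$ are controlled by the full gradient on $\Gamma_h$ (for $a_{T,h}$ the extra zeroth-order contribution involving $\bH_h$ is bounded by $\Vert \bH_h \Vert_{L^\infty(\Gamma_h)}$ times the $L^2(\Gamma_h)$-interpolation error, hence of even higher order), so these yield $\lesssim h^{k}\Vert \bu \Vert_{H^{k+1}(\Gamma)}$, while the zeroth-order $L^2(\Gamma_h)$-terms yield $\lesssim h^{k+1}$. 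The penalty form is estimated by $\eta^{\frac12}\Vert (\bu^e-\bv_h)\cdot\tilde{\bn}_h \Vert_{L^2(\Gamma_h)}\le \eta^{\frac12}\Vert \bu^e-\bv_h \Vert_{L^2(\Gamma_h)}\lesssim \eta^{\frac12} h^{k+1}\Vert \bu \Vert_{H^{k+1}(\Gamma)}$, which produces the second term in \eqref{Eq1}. For the $M$-norm, the $L^2(\Gamma_h)$-part of $\lambda^e - I_{\Theta}^{k_l}\lambda^e$ is bounded, again by Lemma~\ref{lemmascalarapproximationerror} (scalar, degree $k_l$), by $h^{k_l+1}\Vert \lambda \Vert_{H^{k_l+1}(\Gamma)}$.

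The remaining, and genuinely delicate, contributions are the normal-derivative stabilization term $s_h$ and the $\rho$-weighted part of $\Vert \cdot \Vert_{M}$, since these are integrals over the three-dimensional strip $\Omega_{\Theta}^{\Gamma}$ rather than over $\Gamma_h$. Here I would bound the weighted normal derivative by the full volume gradient, apply the interpolation estimate \eqref{eqinterpolationerror} (with $l=1$) on each $\Theta_h(T)$ and sum over $T\in\mathcal{T}_h^\Gamma$, and then convert the resulting volume Sobolev norm of the normal extension into a surface norm by \eqref{lemmasobolevnormsneighborhood}, which supplies the crucial extra factor $h^{\frac12}$. This yields $\rho^{\frac12}h^{k+\frac12}\Vert \bu \Vert_{H^{k+1}(\Gamma)}$ for $s_h$ and $\rho^{\frac12}h^{k_l+\frac12}\Vert \lambda \Vert_{H^{k_l+1}(\Gamma)}$ for the $M$-norm.

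The main obstacle is precisely to see that this half-order gain is sufficient. Invoking the upper bound $\rho\lesssim h^{-1}$ from \eqref{choicerho}, one has $\rho^{\frac12}h^{\frac12}\lesssim 1$, so the velocity-stabilization term is absorbed into $h^{k}\Vert \bu \Vert_{H^{k+1}(\Gamma)}$; the multiplier-stabilization term is instead kept explicitly as $\rho^{\frac12}h^{k_l+\frac12}$ because the scaling of $\rho$ is a free parameter in the Lagrange-multiplier analysis (cf.\ Assumption~\ref{assumptionrhoL}). Collecting the surface and volume contributions for each norm then gives \eqref{Eq1} and \eqref{Eq2}, respectively.
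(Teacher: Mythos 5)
Your proposal is correct and follows essentially the same route as the paper: the same interpolants $I_\Theta^k\bu^e$ and $I_\Theta^{k_l}\lambda^e$ as candidates, componentwise use of Lemma~\ref{lemmascalarapproximationerror} for the surface terms, the combination of \eqref{eqinterpolationerror} with \eqref{lemmasobolevnormsneighborhood} for the volume stabilization terms (yielding the $h^{\frac12}$ gain), and absorption of the velocity stabilization contribution via $\rho\lesssim h^{-1}$ while keeping $\rho^{\frac12}h^{k_l+\frac12}$ explicit in the multiplier bound. Your explicit treatment of the $u_N\bH_h$ term in $E_{T,h}$ and your bookkeeping of the $h^{\frac12}$ factor in the $M$-norm estimate are, if anything, slightly more careful than the paper's.
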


\begin{proof}
We start with the $\Vert \cdot \Vert_{A_h^{P_1}}$-norm. Let $\bu \in H^{k+1}(\Gamma)^3$ and $\bw_h := I^{k}_{\Theta} (\bu^e)$ the component-wise parametric interpolation. We then have 
\begin{equation*} \begin{split}
 & \min_{\bv_h \in \bV_{h,\Theta}^{k}} \Vert \bu^e - \bv_h \Vert_{A_h^{P_1}}^2 \leq  \Vert \bu^e - \bw_h \Vert_{A_h^{P_1}}^2 \\
 &= a_h(\bu^e - \bw_h,\bu^e - \bw_h) + s_h(\bu^e - \bw_h,\bu^e - \bw_h) 
  + k_h(\bu^e - \bw_h,\bu^e - \bw_h).
\end{split}
\end{equation*}
For the first term we get using component-wise Lemma \ref{lemmascalarapproximationerror}
\begin{equation} \label{eqapproximationerror1}\begin{split}
 & a_h(\bu^e - \bw_h,\bu^e - \bw_h) \lesssim \left\Vert E_h(\bu^e - \bw_h) \right\Vert_{L^2(\Gamma_h)}^2 +  \Vert \bu^e - \bw_h \Vert_{L^2(\Gamma_h)}^2 \\
&\lesssim \Vert \nabla (\bu^e - \bw_h) \Vert_{L^2(\Gamma_h)}^2 +  \Vert \bu^e - \bw_h \Vert_{L^2(\Gamma_h)}^2 \lesssim h^{2k}\Vert \bu \Vert_{H^{k+1}(\Gamma)}^2.
\end{split}
\end{equation}
The second term leads to 
\begin{equation} \label{eqapproximationerror2} \begin{split}
 & s_h(\bu^e - \bw_h,\bu^e - \bw_h) = \rho \Vert \nabla (\bu^e - \bw_h) \bn_h \Vert_{L^2(\Omega^\Gamma_{\Theta})}^2 \\
&\leq \rho \Vert \bu^e - \bw_h \Vert_{H^1(\Omega^\Gamma_{\Theta})}^2 
\overset{\eqref{eqinterpolationerror}}{\lesssim} \rho h^{2k}\Vert \bu^e \Vert_{H^{k+1}(\Omega^\Gamma_{\Theta})}^2 
\overset{\eqref{lemmasobolevnormsneighborhood}}{\lesssim} \rho h^{2k+1}\Vert \bu \Vert_{H^{k+1}(\Gamma)}^2.
\end{split}
\end{equation}
For the third term we obtain
\begin{equation*} \begin{split}
k_h(\bu^e - \bw_h,\bu^e - \bw_h) & = \eta \Vert (\bu^e - \bw_h) \cdot \tilde{\bn}_h \Vert_{L^2(\Gamma_h)}^2 \\
& \lesssim \eta \Vert \bu^e - \bw_h \Vert_{L^2(\Gamma_h)}^2 
\overset{\text{Lemma } \ref{lemmascalarapproximationerror}}{\lesssim} \eta h^{2(k+1)}\Vert \bu \Vert_{H^{k+1}(\Gamma)}^2.
\end{split}
\end{equation*} 
Combining this with \eqref{eqapproximationerror1}, \eqref{eqapproximationerror2} and $\rho \lesssim h^{-1}$ proves the bound for the $\Vert \cdot \Vert_{A_h^{P_1}}$-norm. Since 
\begin{equation*} \begin{split}
a_{T,h}(\bu^e - \bw_h,\bu^e - \bw_h) &\lesssim \left\Vert E_{T,h}(\bu^e - \bw_h) \right\Vert_{L^2(\Gamma_h)}^2 +  \Vert \bP_h( \bu^e - \bw_h) \Vert_{L^2(\Gamma_h)}^2 \\
&\lesssim \left\Vert E_h(\bu^e - \bw_h) \right\Vert_{L^2(\Gamma_h)}^2 +  \Vert \bu^e - \bw_h \Vert_{L^2(\Gamma_h)}^2
\end{split}
\end{equation*}
we also immediately get the bound for the $\Vert \cdot \Vert_{A_h^{P_2}}$-norm. Hence, the result in \eqref{Eq1} holds. Now derive the result \eqref{Eq2}. Since 
\begin{equation*} \begin{split}
\Vert \bu^e - \bw_h \Vert_{A_h^L}^2 = a_h(\bu^e - \bw_h,\bu^e - \bw_h) + s_h(\bu^e - \bw_h,\bu^e - \bw_h),
\end{split}
\end{equation*}
we use the estimates in \eqref{eqapproximationerror1} and \eqref{eqapproximationerror2}.
To show the approximation error bound in the $\Vert \cdot \Vert_{M}$-norm we take $\lambda \in H^{k_l+1}(\Gamma)$ and define $\xi_h := I^{k_l}_{\Theta} (\lambda^e)$. Then we have
\begin{equation*} \begin{split}
 \min_{\mu_h \in \bV_{h,\Theta}^{k_l}} \Vert \lambda^e - \mu_h \Vert_{M}  & \leq  \Vert \lambda^e - \xi_h \Vert_{M} \\
 &\lesssim \Vert \lambda^e - \xi_h \Vert_{L^2(\Gamma_h)} + \rho^\frac{1}{2}\Vert \bn_h \cdot \nabla (\lambda^e - \xi_h) \Vert_{L^2(\Omega_\Theta^\Gamma)} \\
 &\overset{\text{Lemma } \ref{lemmascalarapproximationerror}}{\lesssim} h^{k_l+1} \Vert \lambda \Vert_{H^{k_l+1}(\Gamma)} + \rho^\frac{1}{2}\Vert \lambda^e - \xi_h \Vert_{H^1(\Omega_\Theta^\Gamma)}  \\ & 
\overset{\eqref{eqinterpolationerror}, \eqref{lemmasobolevnormsneighborhood}}{\lesssim} (h^{k_l+1} + \rho^\frac{1}{2} h^{k_l})\Vert \lambda \Vert_{H^{k_l+1}(\Gamma)},
\end{split}
\end{equation*}
which completes the proof.
\end{proof}

Note that in \eqref{Eq1}, \eqref{Eq2} we obtain optimal order approximation errors, provided $\rho \lesssim h^{-1}$ and $\eta \lesssim h^{-2}$. 
\subsection{Consistency error analysis}

In this section we present a consistency error analysis. The analysis is rather long and technical. The structure is a follows. In section~\ref{preliminaries} we collect a few basic results for vector functions $\bu \in H^1(\Gamma)^3$ and corresponding extensions $\bu^e \in H^1(\Gamma_h)^3$. These results are rather straightforward and very similar to known results for scalar surface functions. In section \ref{sectgeometry} we derive bounds for basic components of the consistency error that are directly related to the geometry approximation $\Gamma_h\approx \Gamma$. We derive, for example, a bound for $|a_h(\bv,\bw)-a(\bv^l,\bw^l)|$. A key result is derived in section~\ref{SectFEkorn}, namely a discrete Korn-type inequality. Using these preparations, the consistency bounds for the three methods are derived in the sections~\ref{sectcons1} and \ref{sectcons2}.    

\subsubsection{Preliminaries} \label{preliminaries}
We start with results concerning the transformation of the integrals between $\Gamma$ and $\Gamma_h$. Using $\nabla p = \bP - d \bH$ we get for $u \in H^1(\Gamma)$ and  $x \in \Gamma_h$ 
\begin{equation} \label{transfo1} \begin{split}
\gradGh u^e(x) &= \gradGh (u \circ p)(x) 
= \bP_h(x) \nabla p(x) \nabla u(p(x)) \\
&= \bP_h(x) (\bP(x) - d(x)\bH(x))\nabla u(p(x)) = \bB^T(x) \gradG u(p(x)),
\end{split}
\end{equation}
 with $\bB=\bB(x) := \bP(\bI - d\bH)\bP_h$ ($x \in \Gamma_h$). 

From \cite{hansbo2016analysis} we have the following Lemma:
\begin{lemma} \label{lemmaB}
For $x \in \Gamma_h$ and $\bB=\bB(x)$ as above, the map $\bB_{|{\rm range} (\bP_h(x))}$ is invertible for $h$ small enough, i.e. there is $\bB^{-1} \colon {\rm range} (\bP(x)) \to {\rm range} (\bP_h(x))$ such that
\begin{equation*}
\bB\bB^{-1} = \bP, \qquad \bB^{-1} \bB = \bP_h
\end{equation*} 
and we have for $u \in H^1(\Gamma)$, $x \in \Gamma_h$,
\begin{equation*}
\gradG u(p(x)) = \bP(x)\bB^{-T}(x)\gradGh u^e(x).
\end{equation*}
Furthermore, the following estimates hold:
\begin{align*}
\Vert \bB \Vert_{L^\infty(\Gamma_h)} &\lesssim 1, \qquad &\Vert \bP_h\bB^{-1} \bP \Vert_{L^\infty(\Gamma_h)} &\lesssim 1 \\
\Vert \bP \bP_h - \bB \Vert_{L^\infty(\Gamma_h)} &\lesssim h^{k_g+1}, \qquad &\Vert \bP_h \bP - \bP_h\bB^{-1}\bP \Vert_{L^\infty(\Gamma_h)} &\lesssim h^{k_g+1}. 
\end{align*}
For the surface measures on $\Gamma$ and $\Gamma_h$ we have the identity 
\begin{equation*}
d\Gamma = \vert \bB \vert d\Gamma_h
\end{equation*}
where $\vert \bB \vert = \vert det(\bB) \vert$ and we have the estimates
\begin{equation*}
\Vert 1- \vert \bB \vert \Vert_{L^{\infty}(\Gamma_h)} \lesssim h^{k_g+1}, \quad \Vert \vert \bB \vert \Vert_{L^{\infty}(\Gamma_h)} \lesssim 1 , \quad \Vert \vert \bB \vert^{-1} \Vert_{L^{\infty}(\Gamma_h)} \lesssim 1.
\end{equation*}
\end{lemma}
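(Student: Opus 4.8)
The plan is to treat $\bB=\bP(\bI-d\bH)\bP_h$ as a small, lower-order perturbation of the ``plane-to-plane'' projection $\bP\bP_h$ and to reduce everything to the two-dimensional tangent spaces ${\rm range}(\bP_h)$ and ${\rm range}(\bP)$. First I would record the algebraic facts that drive the whole argument: since $\bn=\nabla d$ is a unit gradient we have $\bH\bn=0$, hence $\bP\bH=\bH\bP=\bH$, so that $\bB\bv=(\bP-d\bH)\bv$ whenever $\bv\in{\rm range}(\bP_h)$ and, in particular, ${\rm range}(\bB)\subseteq{\rm range}(\bP)$. Next I would collect the two quantitative ingredients. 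From the construction of $\Gamma_h$ as an order-$k_g$ geometry approximation one has $\|d\|_{L^\infty(\Gamma_h)}\lesssim h^{k_g+1}$, and Lemma~\ref{lemmanormals} gives $\|\bn-\bn_h\|_{L^\infty(\Gamma_h)}\lesssim h^{k_g}$, whence $\|\bP-\bP_h\|_{L^\infty(\Gamma_h)}\lesssim h^{k_g}$ and, importantly, $\|\bP\bn_h\|_{L^\infty(\Gamma_h)}=\|\bP(\bn_h-\bn)\|_{L^\infty(\Gamma_h)}\lesssim h^{k_g}$.

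Invertibility is established next. For $\bv\in{\rm range}(\bP_h)$ the orthogonality $\bn_h\cdot\bv=0$ gives $|\bn\cdot\bv|=|(\bn-\bn_h)\cdot\bv|\lesssim h^{k_g}\|\bv\|$, so $\|\bP\bv\|^2=\|\bv\|^2-(\bn\cdot\bv)^2\geq(1-c h^{2k_g})\|\bv\|^2$; combined with $\|d\bH\bv\|\lesssim h^{k_g+1}\|\bv\|$ this yields $\|\bB\bv\|\geq\tfrac14\|\bv\|$ for $h$ small. Thus $\bB$ is injective on ${\rm range}(\bP_h)$, and since it maps the two-dimensional space ${\rm range}(\bP_h)$ into the two-dimensional space ${\rm range}(\bP)$ it is a bijection between them. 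I would then \emph{define} $\bB^{-1}$ as the inverse of this bijection extended by zero off ${\rm range}(\bP)$, i.e. $\bB^{-1}=\bP_h\bB^{-1}\bP$; the identities $\bB\bB^{-1}=\bP$ and $\bB^{-1}\bB=\bP_h$ are then verified directly on ${\rm range}(\bP)\oplus\Span\{\bn\}$ and ${\rm range}(\bP_h)\oplus\Span\{\bn_h\}$ respectively, and the lower bound $\|\bB\bv\|\geq\tfrac14\|\bv\|$ gives $\|\bP_h\bB^{-1}\bP\|=\|\bB^{-1}\|\lesssim1$, while $\|\bB\|\lesssim1$ is immediate.

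For the transformation formula I would transpose $\bB\bB^{-1}=\bP$ to get $\bB^{-T}\bB^{T}=\bP$ and combine it with \eqref{transfo1}, $\gradGh u^e=\bB^{T}\gradG u(p(\cdot))$, to obtain $\bP\bB^{-T}\gradGh u^e=\bP\bB^{-T}\bB^{T}\gradG u=\bP\,\bP\,\gradG u=\gradG u(p(\cdot))$. The first difference estimate is purely algebraic: $\bP\bP_h-\bB=d\,\bP\bH\bP_h=d\,\bH\bP_h$, so $\|\bP\bP_h-\bB\|_{L^\infty}\lesssim\|d\|_{L^\infty}\lesssim h^{k_g+1}$. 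For the surface measure, the map $p\colon\Gamma_h\to\Gamma$ has tangential differential $\bv\mapsto(\bP-d\bH)\bv$ on $T_x\Gamma_h={\rm range}(\bP_h)$, i.e. exactly $\bB|_{{\rm range}(\bP_h)}$, so the area formula gives $d\Gamma=|\bB|\,d\Gamma_h$ with $|\bB|$ the Jacobian of the restricted map; since $\det\big((\bP\bP_h)|_{{\rm range}(\bP_h)\to{\rm range}(\bP)}\big)=\bn\cdot\bn_h=1-O(h^{2k_g})$ and the $-d\bH$ term perturbs this determinant by $O(\|d\|_{L^\infty})=O(h^{k_g+1})$, multilinearity of the determinant yields $\|1-|\bB|\|_{L^\infty}\lesssim h^{k_g+1}$ (using $2k_g\geq k_g+1$), from which $\||\bB|\|_{L^\infty}\lesssim1$ and $\||\bB|^{-1}\|_{L^\infty}\lesssim1$ follow.

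The main obstacle is the last estimate, $\|\bP_h\bP-\bP_h\bB^{-1}\bP\|_{L^\infty}\lesssim h^{k_g+1}$. Here I would first note $\bP_h\bB^{-1}\bP=\bB^{-1}$ and argue that $\bP_h\bP$ is an $O(h^{k_g+1})$-approximate inverse of $\bB$: computing $\bB\,\bP_h\bP=\bP\bP_h\bP-d\bH\bP_h\bP$ and using $\bP\bP_h\bP=\bP-(\bP\bn_h)(\bP\bn_h)^{T}=\bP+O(h^{2k_g})$ gives $\bB\,\bP_h\bP=\bP+O(h^{k_g+1})$. Applying $\bB^{-1}$ and using $\bB^{-1}\bB=\bP_h$ together with $\|\bB^{-1}\|\lesssim1$ then gives $\bP_h\bP-\bB^{-1}=\bB^{-1}\big(\bP-\bB\,\bP_h\bP\big)=O(h^{k_g+1})$. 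The delicate point is that the naive bound $\|\bP-\bP_h\|\lesssim h^{k_g}$ is \emph{one order too weak}; the gain to $h^{k_g+1}$ relies on the discrepancy between $\bP$ and $\bP_h$ entering only quadratically, through $(\bP\bn_h)(\bP\bn_h)^{T}=O(h^{2k_g})$, which is absorbed by $h^{k_g+1}$ precisely because $k_g\geq1$.
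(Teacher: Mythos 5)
Your proof is correct. Note that the paper itself gives no proof of this lemma --- it is quoted verbatim from \cite{hansbo2016analysis} --- so there is no in-paper argument to compare against; your write-up is essentially the standard perturbation argument used there: treat $\bB$ as an $O(h^{k_g+1})$ perturbation of $\bP\bP_h$ using $\|d\|_{L^\infty(\Gamma_h)}\lesssim h^{k_g+1}$ and $\|\bn-\bn_h\|_{L^\infty(\Gamma_h)}\lesssim h^{k_g}$, and observe that wherever a full extra order beyond $h^{k_g}$ is needed the discrepancy between $\bP$ and $\bP_h$ enters only quadratically (through $(\bP\bn_h)(\bP\bn_h)^T$ and through $\bn\cdot\bn_h=1-O(h^{2k_g})$). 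You correctly identify the last estimate, $\Vert \bP_h\bP-\bP_h\bB^{-1}\bP\Vert_{L^\infty(\Gamma_h)}\lesssim h^{k_g+1}$, as the only genuinely delicate point, and your approximate-inverse argument handles it properly. Two cosmetic remarks: the identity should read $\bP_h\bP-\bB^{-1}=\bB^{-1}\bigl(\bB\,\bP_h\bP-\bP\bigr)$ (your version has the sign flipped, which is harmless for the norm bound), and the bound $\|d\|_{L^\infty(\Gamma_h)}\lesssim h^{k_g+1}$ is not stated in this paper --- it is the standard distance estimate for $\Gamma_h=\Theta_h^{k_g}(\Gamma^{\mathrm{lin}})$ from \cite{grande2017higher} and should be cited as such.
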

Applying Lemma \ref{lemmaB} yields, for $u \in H^1(\Gamma)$, 
\begin{equation*}
\gradG u^l(p(x)) = \bP(x)\bB^{-T}(x)\gradGh u(x), \quad x \in \Gamma_h.
\end{equation*}
Similar useful transformation results for vector-valued functions are given in the following corollary.
\begin{corollary} \label{corollarygradients}
For $\bu \in H^1(\Gamma)^3$ and $\bv \in H^1(\Gamma_h)^3$ we have
\begin{equation*} \begin{split}
\left(\nabla \bu^e \bP\right)^e &= \nabla \bu^e \bP = \nabla \bu^e \bP_h \bB^{-1}\bP \quad \text{on } \Gamma_h, \\
\left(\nabla \bv^l \bP \right)^e &= \nabla \bv^l \bP = \nabla \bv^l \bP_h \bB^{-1} \bP \quad \text{on } \Gamma_h.
\end{split}
\end{equation*}
\end{corollary}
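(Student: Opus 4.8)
The plan is to prove both identities componentwise, reducing each matrix statement to the scalar transformation rules \eqref{transfo1} and Lemma~\ref{lemmaB}. Writing $\bu=(u_1,u_2,u_3)$, the $i$-th row of $\nabla\bu^e$ is $(\nabla u_i^e)^T$, so the $i$-th row of $\nabla\bu^e\bP$ is $(\bP\nabla u_i^e)^T$ and the $i$-th row of $\nabla\bu^e\bP_h\bB^{-1}\bP$ is $\big(\bP\bB^{-T}\bP_h\nabla u_i^e\big)^T$. It therefore suffices to verify the claimed equalities one component at a time and then to reassemble the rows via $\sum_i(\cdots)\be_i^T$; this turns the two matrix identities into statements about the scalar gradients $\nabla u_i^e$ (respectively $\nabla v_i^l$).

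For the first equality I would use that the constant normal extension is constant along the normals of $\Gamma$, so that $\nabla u_i^e\cdot\bn=0$ and hence $\bP\nabla u_i^e=\nabla u_i^e$; on $\Gamma$ this common value is the surface gradient $\gradG u_i$. Thus the rows of $\nabla\bu^e\bP$ are (componentwise) normal extensions of the surface gradients $\gradG u_i$, and applying $(\cdot)^e$ leaves them unchanged. This step is pure bookkeeping and I expect no difficulty.

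The substantive step is the last equality. Applying \eqref{transfo1} to each component gives $\gradGh u_i^e=\bP_h\nabla u_i^e=\bB^T\gradG u_i(p(\cdot))$ on $\Gamma_h$, so that the $i$-th row of $\nabla\bu^e\bP_h$ equals $\gradG u_i(p(\cdot))^T\bB$. Multiplying on the right by $\bB^{-1}\bP$ and using $\bB\bB^{-1}=\bP$ from Lemma~\ref{lemmaB} together with the tangentiality $\bP\gradG u_i=\gradG u_i$, this collapses to $\gradG u_i(p(\cdot))^T$, which is exactly the $i$-th row of $(\nabla\bu^e\bP)^e$. Stacking the rows yields $\nabla\bu^e\bP_h\bB^{-1}\bP=(\nabla\bu^e\bP)^e$.

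For the second line I would repeat the argument verbatim with $\bu^e$ replaced by $\bv^l$, now invoking the companion transformation rule $\gradG w^l(p(x))=\bP\bB^{-T}\gradGh w(x)$ recorded after Lemma~\ref{lemmaB} for the scalar components $v_i\in H^1(\Gamma_h)$ (equivalently \eqref{transfo1} applied to $v_i^l$), and again $\bB\bB^{-1}=\bP$. The only real obstacle I foresee is the projection/transpose bookkeeping: one must keep track of which of $\bP$, $\bP_h$ acts on which side, and use the two relations $\bB\bB^{-1}=\bP$, $\bP_h\bB^{-1}\bP=\bB^{-1}$ consistently so that every left/right multiplication in the matrix product matches the corresponding scalar rule row by row.
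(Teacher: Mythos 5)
Your proposal is correct and follows essentially the same route as the paper's proof: a row-by-row (componentwise) reduction to the scalar transformation rule \eqref{transfo1}, giving $\be_i^T\nabla\bu^e\bP_h=(\gradG u_i\circ p)^T\bB$, followed by right-multiplication with $\bB^{-1}\bP$ and the identities $\bB\bB^{-1}=\bP$ and $\bP\gradG u_i=\gradG u_i$ from Lemma~\ref{lemmaB}, with the second line handled identically for the components $v_i^l$. The identification of $\bP\nabla u_i^e$ on $\Gamma_h$ with the normal extension of $\gradG u_i$ that you use for the first equality is exactly the step the paper itself takes, so there is no substantive difference between the two arguments.
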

\begin{proof}
For $\bu \in H^1(\Gamma)^3$ we have with \eqref{transfo1} and Lemma \ref{lemmaB}
\begin{equation*} \begin{split}
\be_i^T \nabla \bu^e \bP_h &= (\nabla u_i^e)^T \bP_h = (\gradGh u_i^e)^T = (\bB^T \gradG u_i \circ p)^T \\
&= (\bB^T \bP \nabla u_i^e)^T = \be_i^T \nabla \bu^e \bP \bB \quad \text{on } \Gamma_h
\end{split}
\end{equation*}
for $i =1,2,3$. Multiplying by $\bB^{-1} \bP$ from the right results in the equation above. For $\bv \in H^1(\Gamma_h)^3$ we use similar arguments:
\begin{equation*} \begin{split}
\be_i^T \nabla \bv^l \bP_h &= (\nabla v_i^l)^T \bP_h = (\gradGh v_i)^T = (\bB^T \gradG v_i^l \circ p)^T \\
&= (\bB^T \bP \nabla v_i^l)^T = \be_i^T \nabla \bv_i^l \bP \bB \quad \text{on } \Gamma_h.
\end{split}
\end{equation*}
for $i =1,2,3$. Multiplying by $\bB^{-1} \bP$ from the right completes the proof.
\end{proof}

For scalar-valued functions $w \in H^1(\Gamma_h)$ the following equivalences are well known (see \cite{Dziuk88}):
\begin{equation*}
\Vert w \Vert_{L^2(\Gamma_h)} \sim \Vert w^l \Vert_{L^2(\Gamma)}, \qquad \Vert \nabla_{\Gamma_h} w \Vert_{L^2(\Gamma_h)} \sim \Vert \nabla_\Gamma w^l \Vert_{L^2(\Gamma)}.
\end{equation*}
We need similar equivalences for vector-valued functions. These are given in the following lemma.
\begin{lemma} \label{lemmanormequivalences}
For $\bv \in H^1(\Gamma_h)^3$ we have
\begin{align} 
\Vert \bv \Vert_{L^2(\Gamma_h)} &\sim \Vert \bv^l \Vert_{L^2(\Gamma)}, \label{eqnormequivalenceL2} \\
\Vert \nabla \bv^l \bP_h \Vert_{L^2(\Gamma_h)} &\sim \Vert \nabla \bv^l \bP \Vert_{L^2(\Gamma)}. \label{eqnormequivalenceH1}
\end{align}
\end{lemma}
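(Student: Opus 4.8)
The plan is to reduce both equivalences to the pointwise matrix identities already recorded in Lemma~\ref{lemmaB} and Corollary~\ref{corollarygradients}, combined with the surface measure identity $d\Gamma = \vert \bB\vert\, d\Gamma_h$ and the two-sided bound $\vert\bB\vert\sim 1$. For the $L^2$-equivalence \eqref{eqnormequivalenceL2} I would simply change variables via the bijection $p\colon \Gamma_h\to\Gamma$ (valid for $h$ small). Since $\bv^l(p(x))=\bv(x)$ for $x\in\Gamma_h$, the measure identity gives
\[
\Vert \bv^l\Vert_{L^2(\Gamma)}^2 = \int_{\Gamma_h}\Vert \bv(x)\Vert_2^2\,\vert\bB(x)\vert\,d\Gamma_h,
\]
and the estimates $\Vert\,\vert\bB\vert\,\Vert_{L^\infty(\Gamma_h)}\lesssim 1$, $\Vert\,\vert\bB\vert^{-1}\Vert_{L^\infty(\Gamma_h)}\lesssim 1$ from Lemma~\ref{lemmaB} immediately yield $\Vert\bv^l\Vert_{L^2(\Gamma)}^2\sim\Vert\bv\Vert_{L^2(\Gamma_h)}^2$. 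This step is routine.

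For the gradient equivalence \eqref{eqnormequivalenceH1} the cleanest route avoids arguing about $\nabla\bv^l$ pointwise and instead works entirely through Corollary~\ref{corollarygradients}. Writing the right-hand norm as an integral over $\Gamma_h$ via the same change of variables, and noting that $\nabla\bv^l\bP$ evaluated at $p(x)\in\Gamma$ is exactly the normal extension $(\nabla\bv^l\bP)^e(x)$, I obtain
\[
\Vert\nabla\bv^l\bP\Vert_{L^2(\Gamma)}^2=\int_{\Gamma_h}\Vert(\nabla\bv^l\bP)^e\Vert_2^2\,\vert\bB\vert\,d\Gamma_h .
\]
Corollary~\ref{corollarygradients} then supplies the two identities $(\nabla\bv^l\bP)^e=\nabla\bv^l\bP_h\bB^{-1}\bP$ on $\Gamma_h$, which I would exploit in both directions.

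For the upper bound $\Vert\nabla\bv^l\bP\Vert_{L^2(\Gamma)}\lesssim\Vert\nabla\bv^l\bP_h\Vert_{L^2(\Gamma_h)}$ I would factor $\bB^{-1}\bP=\bP_h\bB^{-1}\bP$ (since $\bB^{-1}$ maps into $\mathrm{range}(\bP_h)$) and use submultiplicativity together with $\Vert\bP_h\bB^{-1}\bP\Vert_{L^\infty(\Gamma_h)}\lesssim 1$ and $\vert\bB\vert\sim 1$. For the reverse bound I would instead multiply the corollary identity on the right by $\bB$: using $\bP\bB=\bB$ and $\bB^{-1}\bB=\bP_h$ one gets $\nabla\bv^l\bP_h=(\nabla\bv^l\bP)^e\bB$ on $\Gamma_h$, whence $\Vert\nabla\bv^l\bP_h\Vert_2\leq\Vert(\nabla\bv^l\bP)^e\Vert_2\,\Vert\bB\Vert_2\lesssim\Vert(\nabla\bv^l\bP)^e\Vert_2$ by $\Vert\bB\Vert_{L^\infty(\Gamma_h)}\lesssim 1$; integrating over $\Gamma_h$ and inserting $\vert\bB\vert\sim 1$ to return to the integral over $\Gamma$ closes the estimate.

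The argument contains no genuine obstacle. The only point requiring care is bookkeeping: keeping track of whether a matrix field is evaluated on $\Gamma$ or on $\Gamma_h$ (i.e. consistently identifying $\nabla\bv^l\bP$ at $p(x)$ with its normal extension $(\nabla\bv^l\bP)^e$ at $x$), and choosing for each of the two inequalities the correct way to pass between $(\nabla\bv^l\bP)^e$ and $\nabla\bv^l\bP_h$ — by submultiplicativity through $\bP_h\bB^{-1}\bP$ for the upper bound, and by right-multiplication with $\bB$ for the lower bound. Routing everything through Corollary~\ref{corollarygradients} rather than through a direct manipulation of $\nabla\bv^l$ keeps the $O(d)$ discrepancy between $\nabla\bv^l(x)$ and $\nabla\bv^l(p(x))$ safely hidden inside $\bB$, so no further geometric estimates are needed.
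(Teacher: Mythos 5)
Your proposal is correct and coincides with the paper's own proof: the $L^2$ part is the same change of variables with $\vert\bB\vert\sim 1$, and for the gradient part the paper likewise uses the identity $(\nabla\bv^l\bP)^e=\nabla\bv^l\bP_h\,\bB^{-1}\bP$ from Corollary~\ref{corollarygradients} in one direction (via $\Vert\bP_h\bB^{-1}\bP\Vert_{L^\infty(\Gamma_h)}\lesssim 1$) and the right-multiplied form $\nabla\bv^l\bP_h=(\nabla\bv^l\bP)^e\bB$ in the other (via $\Vert\bB\Vert_{L^\infty(\Gamma_h)}\lesssim 1$). No substantive difference.
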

\begin{proof}
Let $\bv \in H^1(\Gamma_h)^3$. We start with the first equivalence. Using the definition of the lifting, i.e. $\bv^l(p(x)) = \bv(x)$ for $x \in \Gamma_h$, and the integral transformation rule, with Lemma \ref{lemmaB} we  obtain \eqref{eqnormequivalenceL2}. For the second norm equivalence we use Corollary \ref{corollarygradients}:
\begin{equation*} \begin{split}
\Vert \nabla \bv^l \bP_h \Vert_{L^2(\Gamma_h)}^2 &= \int_{\Gamma_h} \nabla \bv^l \bP_h : \nabla \bv^l \bP_h \, ds_h 
= \int_{\Gamma_h} \left(\nabla \bv^l \bP \right)^e \bB : \left(\nabla \bv^l \bP \right)^e \bB \, ds_h \\
&= \int_{\Gamma} \nabla \bv^l \bP (\bB \circ p^{-1}) : \nabla \bv^l \bP (\bB \circ p^{-1}) \vert \bB \vert^{-1} \circ p^{-1} \, ds \\
&\lesssim \Vert \bB \circ p^{-1} \Vert_{L^\infty(\Gamma)}^2 \Vert \vert \bB \vert^{-1} \circ p^{-1} \Vert_{L^\infty(\Gamma)} \Vert \nabla \bv^l \bP  \Vert_{L^2(\Gamma)}^2 
\lesssim \Vert \nabla \bv^l \bP \Vert_{L^2(\Gamma)}^2,
\end{split}
\end{equation*}
where $p^{-1}$ is the inverse of $p|_{\Gamma_h}$. The other direction is obtained with similar arguments:
\begin{equation*} \begin{split}
\Vert \nabla \bv^l \bP \Vert_{L^2(\Gamma)}^2 &= \int_{\Gamma} \nabla \bv^l \bP : \nabla \bv^l \bP \, ds 
=\int_{\Gamma_h} \left(\nabla \bv^l \bP\right)^e : \left(\nabla \bv^l \bP\right)^e \vert \bB \vert \, ds_h \\
&=\int_{\Gamma_h} \nabla \bv^l \bP_h \bB^{-1} \bP : \nabla \bv^l \bP_h \bB^{-1} \bP \vert \bB \vert \, ds_h \\
&\lesssim \Vert \bP_h\bB^{-1} \bP \Vert_{L^\infty(\Gamma_h)}^2 \Vert \vert \bB \vert \Vert_{L^\infty(\Gamma_h)} \Vert \nabla \bv^l \bP_h \Vert_{L^2(\Gamma_h)}^2 
\lesssim \Vert \nabla \bv^l \bP_h \Vert_{L^2(\Gamma_h)}^2. 
\end{split}
\end{equation*}
\end{proof}
\subsubsection{Geometry errors} \label{sectgeometry}
In this section we analyze certain  parts of the consistency error, which are similar in the three discretizations. For this we introduce further notation. 
We define, for $\bv, \bw \in \bV_{reg,h}$:
\begin{align*}
 G_a(\bv,\bw) &:= a_h(\bv,\bw) - a(\bv^l, \bw^l),~~~G_{a_T}(\bv,\bw) := a_{T,h}(\bv,\bw) - a_T(\bv^l, \bw^l),\\
G_f(\bw)   & := (\bbf,\bw^l)_{L^2(\Gamma)} - (\bbf_h,\bw)_{L^2(\Gamma_h)}.
\end{align*}
Let $\bu=\bu_T$ be the solution of \eqref{contform} and $\bw_h \in \bV_{h,\Theta}^{k}$.
The consistency term corresponding to \eqref{discretepenaltyform1} can be written as
\begin{align} 
&A_h^{P_1}(\bu^e, \bw_h) - (\bbf_h, \bw_h)_{L^2(\Gamma_h)} \nonumber \\
&\hspace*{4.8mm}= a_h(\bu^e,\bw_h) + s_h(\bu^e,\bw_h) + k_h(\bu^e,\bw_h) - (\bbf_h,\bw_h)_{L^2(\Gamma_h)} \nonumber \\
&\hspace*{9.6mm}\underbrace{- a(\bu, \bP \bw_h^l) + (\bbf,\bw_h^l)_{L^2(\Gamma)}}_{=0}\nonumber \\
&\hspace*{4.8mm}= a_h(\bu^e,\bw_h) - a(\bu, \bw_h^l) + s_h(\bu^e,\bw_h) + k_h(\bu^e,\bw_h)  \label{consterm1}  \\
&\hspace*{9.6mm} + ( E(\bu) , E((\bw_h^l \cdot \bn)\bn))_{L^2(\Gamma)} + (\bbf,\bw_h^l)_{L^2(\Gamma)} - (\bbf_h,\bw_h)_{L^2(\Gamma_h)} \nonumber \\
&\hspace*{4.8mm}= G_{a}(\bu^e, \bw_h) + ( E(\bu) , E((\bw_h^l \cdot \bn)\bn))_{L^2(\Gamma)} + s_h(\bu^e,\bw_h) + k_h(\bu^e,\bw_h) + G_f(\bw_h).\nonumber 
\end{align}
Similarly, for \eqref{discretepenaltyform2} we get
\begin{equation} \label{consterm2} \begin{split}
&A_h^{P_2}(\bu^e, \bw_h) - (\bbf_h, \bw_h)_{L^2(\Gamma_h)} \\
&\hspace*{4.8mm}= G_{a_T}(\bu^e, \bw_h)  + s_h(\bu^e,\bw_h) + k_h(\bu^e,\bw_h) + G_f(\bw_h).
\end{split}
\end{equation}
Let $(\bu,\lambda)$ be the solution of problem \eqref{contformlagrange}. With $(\bw_h,\mu_h)  \in \bV_{h,\Theta}^{k} \times V_{h,\Theta}^{k_l}$ we get,
\begin{align}
&\mathcal{A}_h((\bu^e,\lambda^e), (\bw_h,\mu_h)) - (\bbf_h, \bw_h)_{L^2(\Gamma_h)} \nonumber \\
&\hspace*{4.8mm}= a_h(\bu^e,\bw_h) + b_h(\bw_h,\lambda^e) + b_h(\bu^e, \mu_h) +  s_h(\bu^e,\bw_h) - (\bbf_h, \bw_h)_{L^2(\Gamma_h)}\label{consterm3}  \\
&\hspace*{9.6mm}+ \underbrace{(\bbf, \bw_h^l)_{L^2(\Gamma)} - a(\bu,\bw_h^l) - (\bw_h^l \cdot \bn, \lambda)_{L^2(\Gamma)}}_{=0} \nonumber \\
&\hspace*{4.8mm}= G_a(\bu^e, \bw_h) + b_h(\bw_h,\lambda^e) + b_h(\bu^e, \mu_h) +  s_h(\bu^e,\bw_h) - (\bw_h^l \cdot \bn, \lambda)_{L^2(\Gamma)} + G_f(\bw_h).
\nonumber
\end{align}
For the derivation of bounds for the geometry errors $G_a(\cdot,\cdot)$, $G_{a_T}(\cdot,\cdot)$ and $G_f(\cdot)$ we use the following lemma. We use the notation
$E_T(\bw):=E(\bP\bw)=E(\bw)- w_N \bH$, for $\bw \in H^1(\Gamma)^3$.
\begin{lemma} \label{lemmaDEdifference}
For $\bv \in H^1(\Gamma_h)^3$ the following bounds hold:
\begin{equation} \label{eqdiffoperators} \begin{split}
\Vert (\gradG \bv^l)^e - \gradGh \bv \Vert_{L^2(\Gamma_h)} &\lesssim h^{k_g}  \Vert  \bv \Vert_{H^1(\Gamma_h)}, \\
\Vert (E (\bv^l))^e - E_h( \bv) \Vert_{L^2(\Gamma_h)} &\lesssim h^{k_g} \Vert  \bv \Vert_{H^1(\Gamma_h)}, \\
\Vert (E_T (\bv^l))^e - E_{T,h}( \bv) \Vert_{L^2(\Gamma_h)} &\lesssim h^{k_g} \left( \Vert  \bv \Vert_{H^1(\Gamma_h)} + h^{-1} \Vert \bv \cdot \bn_h \Vert_{L^2(\Gamma_h)} \right).
\end{split}
\end{equation}
\end{lemma}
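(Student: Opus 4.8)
The plan is to prove the three bounds in order, deriving the second from the first and the third from the second. For the gradient bound I would start from the two representations, valid on $\Gamma_h$,
\begin{equation*}
(\gradG \bv^l)^e = \bP\, \nabla\bv^l\, \bP_h\bB^{-1}\bP, \qquad \gradGh \bv = \bP_h\, \nabla\bv^l\, \bP_h,
\end{equation*}
the first coming from Corollary~\ref{corollarygradients} together with $\bP^e=\bP$ and multiplicativity of the extension, the second because the intrinsic surface gradient on $\Gamma_h$ may be computed from the normal-constant lift $\bv^l$. Two auxiliary facts make the estimate go through: since $\bv^l$ is constant along $\bn$ one has $\nabla\bv^l\,\bn=0$, hence $\nabla\bv^l=\nabla\bv^l\bP$; and, by the definition of the $H^1(\Gamma_h)$-norm, $\|\nabla\bv^l\bP_h\|_{L^2(\Gamma_h)}\le\|\bv\|_{H^1(\Gamma_h)}$, from which (using $\nabla\bv^l=\nabla\bv^l\bP$, Corollary~\ref{corollarygradients} and $\|\bP_h\bB^{-1}\bP\|_{L^\infty}\lesssim1$) one also obtains $\|\nabla\bv^l\|_{L^2(\Gamma_h)}\lesssim\|\bv\|_{H^1(\Gamma_h)}$.

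The next step is a telescoping decomposition
\begin{equation*}
(\gradG\bv^l)^e-\gradGh\bv = \bP\,\nabla\bv^l\,(\bP_h\bB^{-1}\bP-\bP_h\bP) + (\bP-\bP_h)\,\nabla\bv^l\,\bP_h\bP - \bP_h\,\nabla\bv^l\,\bP_h\,\bn\bn^T,
\end{equation*}
where in the last term I have used $\bP-\bI=-\bn\bn^T$. Each summand is then bounded in $L^2(\Gamma_h)$ by $h^{k_g}\|\bv\|_{H^1(\Gamma_h)}$: the first by $\|\bP_h\bB^{-1}\bP-\bP_h\bP\|_{L^\infty}\lesssim h^{k_g+1}$ (Lemma~\ref{lemmaB}); the second by $\|\bP-\bP_h\|_{L^\infty}\lesssim h^{k_g}$ (a consequence of Lemma~\ref{lemmanormals}); the third by $\|\bP_h\bn\|_{L^\infty}\lesssim h^{k_g}$ (again from $\|\bn-\bn_h\|_{L^\infty}\lesssim h^{k_g}$), the accompanying gradient factors being controlled by $\|\bv\|_{H^1(\Gamma_h)}$ as above. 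This yields the first estimate. The second follows at once: since $E(\bv^l)=\tfrac12(\gradG\bv^l+\gradG^T\bv^l)$, $E_h(\bv)=\tfrac12(\gradGh\bv+\gradGh^T\bv)$, and both transposition and extension commute, $(E(\bv^l))^e-E_h(\bv)$ is the symmetric part of $(\gradG\bv^l)^e-\gradGh\bv$, and the triangle inequality finishes it.

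For the third bound I would write $E_T(\bv^l)=E(\bv^l)-(\bv^l\cdot\bn)\bH$ and $E_{T,h}(\bv)=E_h(\bv)-(\bv\cdot\bn_h)\bH_h$, and note that $((\bv^l\cdot\bn)\bH)^e=(\bv\cdot\bn)\bH$ on $\Gamma_h$ (using $\bn^e=\bn$, $\bH^e=\bH$, $(\bv^l)^e=\bv$), so that
\begin{equation*}
(E_T(\bv^l))^e-E_{T,h}(\bv) = \big[(E(\bv^l))^e-E_h(\bv)\big] - \big[(\bv\cdot\bn)\bH-(\bv\cdot\bn_h)\bH_h\big].
\end{equation*}
The first bracket is controlled by the second estimate. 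For the second I split $(\bv\cdot\bn)\bH-(\bv\cdot\bn_h)\bH_h=\big(\bv\cdot(\bn-\bn_h)\big)\bH+(\bv\cdot\bn_h)(\bH-\bH_h)$, bounding the first summand by $\|\bn-\bn_h\|_{L^\infty}\|\bv\|_{L^2(\Gamma_h)}\lesssim h^{k_g}\|\bv\|_{H^1(\Gamma_h)}$ and the second by $\|\bH-\bH_h\|_{L^\infty}\|\bv\cdot\bn_h\|_{L^2(\Gamma_h)}\lesssim h^{k_g-1}\|\bv\cdot\bn_h\|_{L^2(\Gamma_h)}$ via \eqref{eqapproxH}. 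Collecting terms gives the stated bound.

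The \emph{main obstacle} is the first estimate, and specifically the fact that $\bP-\bP_h$ is only $O(h^{k_g})$ — one order worse than quantities such as $\bP\bP_h-\bB$. Consequently one cannot afford to lose any power of $h$ on the gradient factor, which is why the gradient must be kept paired with a full projection throughout: the identity $\nabla\bv^l=\nabla\bv^l\bP$ together with the norm equivalence \eqref{eqnormequivalenceH1} lets one bound $\|\nabla\bv^l\bP_h\|_{L^2(\Gamma_h)}$ by $\|\bv\|_{H^1(\Gamma_h)}$ with no inverse power of $h$, and the telescoping must be arranged so that every surviving term carries exactly one factor of size $O(h^{k_g})$ while the companion gradient factor stays of size $\|\bv\|_{H^1(\Gamma_h)}$.
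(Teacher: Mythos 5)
Your proposal is correct and follows essentially the same route as the paper: the same two representations $(\gradG\bv^l)^e=\bP\,\nabla\bv^l\,\bP_h\bB^{-1}\bP$ and $\gradGh\bv=\bP_h\nabla\bv^l\bP_h$, a three-term telescoping controlled by Lemma~\ref{lemmaB} and $\Vert\bP-\bP_h\Vert_{L^\infty}\lesssim h^{k_g}$, symmetrization for the second bound, and the identical splitting $(\bv\cdot(\bn-\bn_h))\bH+(\bv\cdot\bn_h)(\bH-\bH_h)$ for the third. The only cosmetic difference is that your telescoping produces a term $\bP_h\nabla\bv^l\bP_h\bn\bn^T$ (handled via $\Vert\bP_h\bn\Vert\lesssim h^{k_g}$) where the paper keeps the factor $(\bP-\bP_h)$ on the right; both are valid and yield the same bound.
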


\begin{proof}
We start with the first inequality. Using Corollary \ref{corollarygradients} we can write
\begin{equation*} \begin{split}
(\gradG \bv^l)^e - \gradGh \bv &= (\bP \nabla \bv^l \bP)^e - \bP_h \nabla \bv^l \bP_h \\
&= \bP \nabla \bv^l \bP_h \bB^{-1} \bP - \bP_h \nabla \bv^l \bP_h \\
&= (\bP - \bP_h) \nabla \bv^l \bP_h \bB^{-1} \bP + \bP_h \nabla \bv^l \bP_h (\bP_h\bB^{-1} \bP - \bP_h\bP) \\
&\hspace*{4.8mm}+ \bP_h \nabla \bv^l \bP_h (\bP - \bP_h). 
\end{split}
\end{equation*} 
Hence, with Lemma \ref{lemmaB} we get
\begin{equation*} \begin{split}
\Vert (\gradG \bv^l)^e - \gradGh \bv \Vert_{L^2(\Gamma_h)} &\lesssim \Vert \bP - \bP_h \Vert_{L^\infty(\Gamma_h)} \Vert \nabla \bv^l \bP_h \Vert_{L^2(\Gamma_h)}  \Vert \bP_h\bB^{-1} \bP \Vert_{L^\infty(\Gamma_h)} \\
&\hspace*{4.8mm}+ \Vert  \bP_h \Vert_{L^\infty(\Gamma_h)}  \Vert \nabla \bv^l \bP_h \Vert_{L^2(\Gamma_h)} \Vert\bP_h \bB^{-1} \bP - \bP_h\bP \Vert_{L^\infty(\Gamma_h)} \\
&\hspace*{4.8mm}+ \Vert \bP_h \Vert_{L^\infty(\Gamma_h)} \Vert \nabla \bv^l \bP_h \Vert_{L^2(\Gamma_h)} \Vert \bP - \bP_h \Vert_{L^\infty(\Gamma_h)} \\
&\lesssim h^{k_g} \Vert \nabla \bv^l \bP_h \Vert_{L^2(\Gamma_h)},
\end{split}
\end{equation*}
which shows the first inequality in \eqref{eqdiffoperators}. Combining this with
\begin{equation*} \begin{split}
(E (\bv^l))^e - E_h( \bv) &= \frac{1}{2} \left( (\gradG \bv^l)^e + (\gradG^T \bv^l)^e  \right) - \frac{1}{2} \left( \gradGh \bv + \gradGh^T \bv  \right) \\
&= \frac{1}{2} \left( (\gradG \bv^l)^e - \gradGh \bv  \right) + \frac{1}{2} \left( (\gradG^T \bv^l)^e - \gradGh^T \bv  \right)
\end{split}
\end{equation*}
we obtain the second inequality in \eqref{eqdiffoperators}.   For the last inequality in \eqref{eqdiffoperators} we note
\begin{equation*}
(E_T (\bv^l))^e - E_{T,h}( \bv) = (E (\bv^l))^e - E_h( \bv) - ((\bv \cdot \bn)\bH)^e + (\bv \cdot \bn_h)\bH_h.
\end{equation*}
  Applying Lemma \ref{lemmanormals} and inequality \eqref{eqapproxH} we obtain
\begin{equation*} \begin{split}
&\Vert ((\bv \cdot \bn)\bH)^e - (\bv \cdot \bn_h)\bH_h  \Vert_{L^2(\Gamma_h)} \\
&\hspace*{4.8mm}\lesssim \Vert (\bv \cdot(\bn - \bn_h))\bH  \Vert_{L^2(\Gamma_h)} + \Vert (\bv \cdot \bn_h)(\bH -\bH_h)  \Vert_{L^2(\Gamma_h)} \\
&\hspace*{4.8mm}\lesssim h^{k_g} \Vert \bv \Vert_{L^2(\Gamma_h)} + h^{k_g-1} \Vert \bv \cdot \bn_h \Vert_{L^2(\Gamma_h)} \lesssim h^{k_g} \left(\Vert \bv \Vert_{L^2(\Gamma_h)} + h^{-1} \Vert \bv \cdot \bn_h \Vert_{L^2(\Gamma_h)}\right).
\end{split}
\end{equation*}
Combining this with the second inequality in \eqref{eqdiffoperators} we obtain the third one.
\end{proof}
\ \\
\begin{lemma} \label{lemmageometryapproximationerrors}
Let $\bbf_h$ be an approximation of $\bbf$ such that $\Vert \vert \bB\vert \bbf^e - \bbf_h \Vert_{L^2(\Gamma_h)} \lesssim h^{k_g+1} \Vert \bbf \Vert_{L^2(\Gamma)}$ holds. For $\bv, \bw \in H^1(\Gamma_h)^3$ we then have 
\begin{equation*} \label{eqgeometryapproximationerrors} \begin{split}
\vert G_{a_T}(\bv,\bw) \vert &\lesssim h^{k_g} \left(\Vert \bv \Vert_{H^1(\Gamma_h)} + h^{-1} \Vert \bv \cdot \bn_h \Vert_{L^2(\Gamma_h)}\right) \left(\Vert \bw \Vert_{H^1(\Gamma_h)} + h^{-1} \Vert \bw \cdot \bn_h \Vert_{L^2(\Gamma_h)}\right), \\
\vert G_a(\bv,\bw) \vert &\lesssim h^{k_g} \Vert \bv \Vert_{H^1(\Gamma_h)} \Vert \bw \Vert_{H^1(\Gamma_h)}, \\
\vert G_f(\bw) \vert &\lesssim h^{k_g+1} \Vert \bbf \Vert_{L^2(\Gamma)} \Vert \bw \Vert_{L^2(\Gamma_h)}.
\end{split}
\end{equation*}
\end{lemma}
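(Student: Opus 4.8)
The plan is to reduce all three estimates to integrals over $\Gamma_h$ by pulling back the integrals over $\Gamma$ with the surface--measure identity $d\Gamma=|\bB|\,d\Gamma_h$ from Lemma~\ref{lemmaB}, and then to exploit that lifting and extension undo each other, i.e.\ $(\bv^l)^e=\bv$ on $\Gamma_h$ (and likewise $\bP^e=\bP$, $\bn^e=\bn$), so that the transported $\Gamma$-integrands become $\Gamma_h$-integrands that differ from the discrete ones only through (a) the operator mismatch quantified in Lemma~\ref{lemmaDEdifference}, (b) the scalar factor $1-|\bB|$, and (c) the projector mismatch $\bP-\bP_h$. For the two bilinear forms I would split each into its ``$E$-part'' and its ``$L^2$-part'' and treat them separately.

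For $G_a(\bv,\bw)$ I would first write $a(\bv^l,\bw^l)=\int_{\Gamma_h}(E(\bv^l))^e:(E(\bw^l))^e\,|\bB|\,ds_h+\int_{\Gamma_h}\bv\cdot\bw\,|\bB|\,ds_h$. Subtracting $a_h(\bv,\bw)$ and telescoping, the $E$-part becomes $\int_{\Gamma_h}\big([E_h(\bv)-(E(\bv^l))^e]:E_h(\bw)+(E(\bv^l))^e:[E_h(\bw)-(E(\bw^l))^e]+(E(\bv^l))^e:(E(\bw^l))^e(1-|\bB|)\big)\,ds_h$; Cauchy--Schwarz together with the second bound of \eqref{eqdiffoperators}, the auxiliary bound $\|(E(\bw^l))^e\|_{L^2(\Gamma_h)}\lesssim\|\bw\|_{H^1(\Gamma_h)}$ (from the norm equivalences of Lemma~\ref{lemmanormequivalences}) and $\|1-|\bB|\|_{L^\infty(\Gamma_h)}\lesssim h^{k_g+1}$ produces the $h^{k_g}$ factor. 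The $L^2$-part reduces to $\int_{\Gamma_h}\bv\cdot\bw\,(1-|\bB|)\,ds_h=O(h^{k_g+1})$ and is absorbed, giving the second estimate.

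For $G_{a_T}(\bv,\bw)$ I would run the same telescoping with $E_{T,h}$ and $E_T$ in place of $E_h$ and $E$, now invoking the \emph{third} bound of \eqref{eqdiffoperators}; this is precisely what makes the factors $\|\bv\|_{H^1(\Gamma_h)}+h^{-1}\|\bv\cdot\bn_h\|_{L^2(\Gamma_h)}$ appear. I would also use $\|E_{T,h}(\bw)\|_{L^2(\Gamma_h)}\lesssim\|\bw\|_{H^1(\Gamma_h)}+\|\bw\cdot\bn_h\|_{L^2(\Gamma_h)}$, which is dominated by the asserted factor because $h^{-1}\gtrsim 1$ for small $h$. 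The $L^2$-part now compares $\int_{\Gamma_h}\bP_h\bv\cdot\bP_h\bw$ with $\int_{\Gamma_h}\bP\bv\cdot\bP\bw\,|\bB|$; writing $\bP_h\bv\cdot\bP_h\bw-\bP\bv\cdot\bP\bw=\bv^T(\bP_h-\bP)\bw$ and using $\|\bP-\bP_h\|_{L^\infty(\Gamma_h)}\lesssim h^{k_g}$ (a consequence of Lemma~\ref{lemmanormals}) together with the $O(h^{k_g+1})$ measure error yields a contribution bounded by $h^{k_g}\|\bv\|_{L^2(\Gamma_h)}\|\bw\|_{L^2(\Gamma_h)}$, which again fits inside the claimed product.

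Finally $G_f(\bw)$ is direct: transporting the $\Gamma$-integral gives $G_f(\bw)=\int_{\Gamma_h}(|\bB|\bbf^e-\bbf_h)\cdot\bw\,ds_h$, and Cauchy--Schwarz with the assumed data-approximation bound $\||\bB|\bbf^e-\bbf_h\|_{L^2(\Gamma_h)}\lesssim h^{k_g+1}\|\bbf\|_{L^2(\Gamma)}$ finishes it. The only genuinely delicate point is the bookkeeping in $G_{a_T}$: one must check that every cross term in the telescoping is controlled by the \emph{product} of the two normal-augmented factors and that no spurious $h^{-2}$ is generated. This is guaranteed because each occurrence of $h^{-1}\|\cdot\,\cdot\bn_h\|$ enters at most once per argument, through \eqref{eqdiffoperators}; everything else is routine Cauchy--Schwarz combined with the uniform $L^\infty$ bounds supplied by Lemmas~\ref{lemmaB} and~\ref{lemmanormals}.
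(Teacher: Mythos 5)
Your proposal is correct and follows essentially the same route as the paper: transport the $\Gamma$-integrals to $\Gamma_h$ via $d\Gamma=|\bB|\,d\Gamma_h$, telescope the resulting differences into operator-mismatch terms controlled by Lemma~\ref{lemmaDEdifference} (the third bound for the $E_T$ case, which is exactly where the $h^{-1}\Vert\cdot\,\cdot\,\bn_h\Vert_{L^2(\Gamma_h)}$ factors enter), plus $1-|\bB|$ and $\bP-\bP_h$ terms handled by the $L^\infty$ bounds of Lemmas~\ref{lemmaB} and~\ref{lemmanormals}, and finish $G_f$ by Cauchy--Schwarz with the data assumption. The only difference is that you spell out the telescoping and the identity $\bP_h\bv\cdot\bP_h\bw-\bP\bv\cdot\bP\bw=\bv^T(\bP_h-\bP)\bw$ explicitly where the paper calls it ``an obvious splitting''.
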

\begin{proof}
We start with the estimate for the geometric error $G_{a_T}(\cdot,\cdot)$. Using the definitions we can write
\begin{equation} \label{eqproofGa} \begin{split}
G_{a_T}(\bv,\bw) &= a_{T,h}(\bv,\bw) - a_T(\bv^l, \bw^l) \\ 
&= (E_{T,h}(\bv), E_{T,h}(\bw))_{L^2(\Gamma_h)} - (\bP_h\bv,\bP_h\bw)_{L^2(\Gamma_h)} \\
&\hspace*{4.8mm}- (E_T(\bv^l), E_T(\bw^l))_{L^2(\Gamma)} + (\bP \bv^l, \bP\bw^l)_{L^2(\Gamma)}.
\end{split}
\end{equation}
Combining the second and fourth term we get 
\begin{align*}
 (\bP_h\bv,\bP_h\bw)_{L^2(\Gamma_h)} - (\bP\bv^l,\bP\bw^l)_{L^2(\Gamma)}  
  =  (\bP_h\bv,\bP_h\bw)_{L^2(\Gamma_h)} - (\vert \bB \vert \bP\bv,\bP\bw)_{L^2(\Gamma_h)}.
\end{align*}
Using an obvious splitting, $\|\bP- \bP_h\|_{L^\infty(\Gamma_h)}\lesssim h^{k_g}$ and $\|1- |\bB|\|_{L^\infty(\Gamma_h)}\lesssim h^{k_g+1}$ we obtain a bound $\lesssim h^{k_g}\Vert \bv \Vert_{L^2(\Gamma_h)}  \Vert \bw \Vert_{L^2(\Gamma_h)}$. 
For the first and third term of the right hand side of equation \eqref{eqproofGa} we have
\begin{equation*} \begin{split}
& (E_{T,h}(\bv), E_{T,h}(\bw))_{L^2(\Gamma_h)} -  (E_T(\bv^l), E_T(\bw^l))_{L^2(\Gamma)} \\
&\hspace*{4.8mm}=  (E_{T,h}(\bv), E_{T,h}(\bw))_{L^2(\Gamma_h)} - (\vert \bB \vert(E_T(\bv^l))^e, (E_T(\bw^l))^e)_{L^2(\Gamma_h)}.
\end{split}
\end{equation*}
Using a similar splitting, the third inequality in Lemma~\ref{lemmaDEdifference}, $ \Vert (E_T(\bv^l))^e \Vert_{L^2(\Gamma_h)} \lesssim \Vert  \bv \Vert_{H^1(\Gamma_h)}$, $\Vert (E_T(\bw^l))^e \Vert_{L^2(\Gamma_h)} \lesssim \Vert  \bw \Vert_{H^1(\Gamma_h)}$ and combining this with the result above we obtain the bound for $G_{a_T}(\cdot,\cdot)$.
For
\begin{equation*} \begin{split}
G_a(\bv,\bw) &= a_h(\bv,\bw) - a(\bv^l, \bw^l)  \\ 
&=  (E_h(\bv), E_h(\bw))_{L^2(\Gamma_h)} - (\bv,\bw)_{L^2(\Gamma_h)}\\
&\hspace*{4.8mm}- (E(\bv^l), E(\bw^l))_{L^2(\Gamma)} + (\bv^l, \bw^l)_{L^2(\Gamma)}
\end{split}
\end{equation*}
very similar arguments can be applied.
Finally, the  bound for $G_f(\cdot)$ follows from
\begin{equation*} \begin{split}
|G_f(\bw)| & = |(\bbf,\bw^l)_{L^2(\Gamma)} - (\bbf_h,\bw)_{L^2(\Gamma_h)}| 
=\left|  (\vert \bB \vert \bbf^e ,\bw)_{L^2(\Gamma_h)} - (\bbf_h,\bw)_{L^2(\Gamma_h)} \right| \\
&\lesssim \Vert \vert \bB\vert \bbf^e - \bbf_h \Vert_{L^2(\Gamma_h)} \Vert \bw \Vert_{L^2(\Gamma_h)} 
\lesssim h^{k_g+1} \Vert \bbf \Vert_{L^2(\Gamma)} \Vert \bw \Vert_{L^2(\Gamma_h)}.
\end{split}
\end{equation*}
\end{proof}
\begin{remark} \rm If in Lemma~\ref{lemmageometryapproximationerrors}, for $\bv, \bw$ we take $\bu^e, \bw^e$, with smooth function $\bu,\bw \in H^2(\Gamma)$, it may be possible to improve the bounds. This can be relevant in the derivation of $L^2$-norm optimal error bounds (which we do not consider in this paper).
\end{remark}

\subsection{Discrete Korn's type inequality} \label{SectFEkorn}
In  geometry error bounds derived in  Lemma~\ref{lemmageometryapproximationerrors} we obtain natural terms of the form $\|\bw_h\|_{H^1(\Gamma_h)}$, with $\bw_h \in \bV_{h,\Theta}^{k}$. These have to be controlled in terms of the discrete energy norms, cf. Strang-Lemmas.  
A key tool for quantifying this control is a discrete Korn's type inequality that is derived in this section. This result can be understood as an analogon of so-called discrete $H^1$ type bounds derived for higher order surface finite element spaces in \cite{hansbo2016analysis}.
\begin{lemma} \label{lemmadiscretekorninequality}
For $h$ sufficiently small the following holds:
\begin{equation} \label{discKorn1}\begin{split} 
\Vert \bv_h \Vert_{H^1(\Gamma_h)} &\lesssim \Vert E_{T,h}(\bv_h) \Vert_{L^2(\Gamma_h)} + \Vert \bP_h \bv_h \Vert_{L^2(\Gamma_h)} + h^{-1} \Vert \bv_h \cdot {\bn}_h \Vert_{L^2(\Gamma_h)} \\
&\hspace*{4.8mm}+ h^{-\frac{1}{2}} \Vert \nabla \bv_h \bn_h \Vert_{L^2(\Omega_\Theta^\Gamma)}, \quad \text{for all}~~\bv_h \in \bV_{h,\Theta}^{k}.
\end{split}
\end{equation}
\end{lemma}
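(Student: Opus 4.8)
The plan is to transfer the whole estimate to the exact surface $\Gamma$, split the lifted field into tangential and normal parts, and apply the \emph{continuous} Korn inequality (Lemma~\ref{lemmakorn}) to the tangential part. First I would use the norm equivalences of Lemma~\ref{lemmanormequivalences} to replace $\|\bv_h\|_{H^1(\Gamma_h)}$ by $\|\bv_h^l\|_{H^1(\Gamma)}$, where $\bv_h^l$ is the lift of $\bv_h$, and then decompose $\bv_h^l = \bP\bv_h^l + (\bv_h^l\cdot\bn)\bn$. The triangle inequality reduces the task to bounding the two parts separately, and a short computation using $\nabla\bv_h^l=\nabla\bv_h^l\bP$ on $\Gamma$ together with $\bH\bn=0$ shows that the full tangential Jacobian of $\bv_h^l$ is controlled by the covariant gradients of these two parts plus curvature terms of size $\|\bv_h^l\|_{L^2(\Gamma)}$.

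For the tangential part $\bP\bv_h^l\in\bV_T$, Lemma~\ref{lemmakorn} gives $\|\bP\bv_h^l\|_{H^1(\Gamma)}\lesssim \|\bP\bv_h^l\|_{L^2(\Gamma)}+\|E_T(\bv_h^l)\|_{L^2(\Gamma)}$, since $E(\bP\bv_h^l)=E_T(\bv_h^l)$ by definition. The \emph{discrete} strain $E_{T,h}(\bv_h)$ enters through the third estimate of Lemma~\ref{lemmaDEdifference}, which bounds $\|(E_T(\bv_h^l))^e-E_{T,h}(\bv_h)\|_{L^2(\Gamma_h)}$ by $h^{k_g}(\|\bv_h\|_{H^1(\Gamma_h)}+h^{-1}\|\bv_h\cdot\bn_h\|_{L^2(\Gamma_h)})$; combined with the norm equivalence for $E_T(\bv_h^l)$ and $\|\bP-\bP_h\|_{L^\infty(\Gamma_h)}\lesssim h^{k_g}$ this produces exactly the admissible terms $\|\bP_h\bv_h\|_{L^2(\Gamma_h)}$, $\|E_{T,h}(\bv_h)\|_{L^2(\Gamma_h)}$, $h^{-1}\|\bv_h\cdot\bn_h\|_{L^2(\Gamma_h)}$, plus a remainder $h^{k_g}\|\bv_h\|_{H^1(\Gamma_h)}$ that will be absorbed at the end. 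So the tangential contribution costs no negative power of $h$.

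The genuinely delicate object is the tangential gradient of the normal part, $\gradG(\bv_h^l\cdot\bn)$. Its $L^2$-norm is harmless (equal, up to norm equivalence and an $O(h^{k_g})$ swap $\bn\to\bn_h$, to $\|\bv_h\cdot\bn_h\|_{L^2(\Gamma_h)}$), but after transferring to $\Gamma_h$ and applying the product rule the leading term is $\bP_h(\nabla\bv_h)^T\bn_h$, the tangential--normal (shear) block of the full Jacobian. This block is \emph{not} seen by $E_h(\bv_h)=\bP_h E(\bv_h)\bP_h$, hence not by $E_{T,h}(\bv_h)$, so it cannot be controlled by the strain. Here I would use two observations: writing $\bv_h=\bP_h\bv_h+(\bv_h\cdot\bn_h)\bn_h$ and exploiting $\bn_h^T\nabla\bn_h=0$ shows that the shear of the $\Gamma_h$-tangential part $\bP_h\bv_h$ is controlled purely by curvature, i.e.\ by $\|\bP_h\bv_h\|_{L^2(\Gamma_h)}$; thus the whole difficulty collapses to estimating $\gradGh(\bv_h\cdot\bn_h)$, the surface gradient of the \emph{small} scalar $\bv_h\cdot\bn_h$.

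To close the argument I would establish a cut-stabilised inverse inequality of the form $\|\gradGh(\bv_h\cdot\bn_h)\|_{L^2(\Gamma_h)}\lesssim h^{-1}\|\bv_h\cdot\bn_h\|_{L^2(\Gamma_h)}+h^{-\frac12}\|\nabla\bv_h\bn_h\|_{L^2(\Omega_{\Theta}^{\Gamma})}+\|\bv_h\|_{L^2(\Gamma_h)}$, mimicking the derivation of \eqref{HH4a} via the trace estimate \eqref{eqtraceestimate} and \eqref{HH2}, but applied to the scalar $\bv_h\cdot\bn_h$ rather than componentwise to $\bv_h$; the volume term $\|\nabla\bv_h\bn_h\|_{L^2(\Omega_{\Theta}^{\Gamma})}$ is precisely what restores robustness with respect to small cuts, while treating $\bv_h\cdot\bn_h$ as a single (small) quantity is what keeps the $h^{-1}$ factor on the normal component and not on the full field. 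I expect this inverse estimate to be the main obstacle, exactly because $\bv_h\cdot\bn_h$ is only a \emph{smooth multiple} of a finite element function, so one must carry the fact that $\bn_h$ has $h$-independent derivative bounds through the local inverse inequalities. Finally, assembling the tangential and normal contributions, using $\|\bv_h\|_{L^2(\Gamma_h)}\le\|\bP_h\bv_h\|_{L^2(\Gamma_h)}+h^{-1}\|\bv_h\cdot\bn_h\|_{L^2(\Gamma_h)}$ to dispose of residual lower-order terms, and choosing $h$ small enough to absorb the $h^{k_g}\|\bv_h\|_{H^1(\Gamma_h)}$ remainder into the left-hand side, yields \eqref{discKorn1}.
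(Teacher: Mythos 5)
Your overall architecture coincides with the paper's: lift to $\Gamma$ via Lemma~\ref{lemmanormequivalences}, split $\bv_h^l=\bP\bv_h^l+(\bv_h^l\cdot\bn)\bn$, handle the tangential part with the continuous Korn inequality (Lemma~\ref{lemmakorn}) plus the third estimate of Lemma~\ref{lemmaDEdifference}, absorb the $h^{k_g}\Vert\bv_h\Vert_{H^1(\Gamma_h)}$ remainder for small $h$, and reduce the normal part to controlling the shear block $\bP_h(\nabla\bv_h)^T\bn$ by an inverse inequality that charges $h^{-1}$ only to the normal component. That reduction is exactly where the paper also lands.

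However, the step you yourself flag as ``the main obstacle'' is a genuine gap, not a technicality you can carry through: the claimed inverse inequality
$\Vert\gradGh(\bv_h\cdot\bn_h)\Vert_{L^2(\Gamma_h)}\lesssim h^{-1}\Vert\bv_h\cdot\bn_h\Vert_{L^2(\Gamma_h)}+h^{-\frac12}\Vert\nabla\bv_h\bn_h\Vert_{L^2(\Omega_\Theta^\Gamma)}+\Vert\bv_h\Vert_{L^2(\Gamma_h)}$
cannot be obtained by ``mimicking'' \eqref{HH4}/\eqref{HH4a}, because those estimates (and Lemma~\ref{lemmastabilizationinequality} underlying them) are inverse inequalities valid only for functions in the fixed finite element space. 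The product $\bv_h\cdot\bn_h$ is not such a function: $\bn_h$ is a normalized, element-wise rational field built from $D\Theta_h^{-T}\bn_{\rm lin}$ and is in general discontinuous across element faces. If you only use that $\bn_h$ has $h$-independent derivative bounds, the element-wise estimate $\Vert\nabla(\bv_h\cdot\bn_h)\Vert_{L^2(T)}\lesssim h^{-1}\Vert\bv_h\Vert_{L^2(T)}+\Vert\bv_h\Vert_{L^2(T)}$ is the best you get, and the crucial point --- that the $h^{-1}$ multiplies the \emph{small} quantity $\Vert\bv_h\cdot\bn_h\Vert$ rather than $\Vert\bv_h\Vert$ --- is lost; with $h^{-1}\Vert\bv_h\Vert_{L^2(\Gamma_h)}$ on the right the lemma becomes useless for the consistency analysis. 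The paper's resolution is a specific device missing from your argument: introduce $\hat\bn_h:=I_\Theta^1\bn$, the continuous piecewise (mapped) linear interpolant of the exact normal, so that $\bv_h\cdot\hat\bn_h\in V_{h,\Theta}^{k+1}$ is an honest finite element function to which \eqref{HH4} applies, and then swap $\bn\leftrightarrow\hat\bn_h\leftrightarrow\bn_h$ at the cost of $O(h)$ perturbations controlled by $\Vert\bv_h\Vert_{L^2(\Gamma_h)}$ and $h^{\frac12}\Vert\nabla\bv_h\bn_h\Vert_{L^2(\Omega_\Theta^\Gamma)}$ via \eqref{HH2} and \eqref{HH4a}. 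Without this (or an equivalent) polynomialization of the normal field, your final step does not close.
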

\begin{proof}
From Lemma \ref{lemmanormequivalences} it follows
\begin{equation} \label{eqdiscreteKornsplit}
\Vert \bv_h \Vert_{H^1(\Gamma_h)} \lesssim \Vert \bv_h^l \Vert_{H^1(\Gamma)} \lesssim \Vert \bP \bv_h^l \Vert_{H^1(\Gamma)} + \Vert  \bv_h^l \cdot \bn \Vert_{H^1(\Gamma)}.
\end{equation}
The term with the tangential part, $\Vert \bP \bv_h^l \Vert_{H^1(\Gamma)}$, can be bounded using the surface Korn inequality (Lemma \ref{lemmakorn})  and Lemma \ref{lemmaDEdifference}:
\[ \begin{split}
& \Vert \bP \bv_h^l \Vert_{H^1(\Gamma)} 
\lesssim \Vert E(\bP \bv_h^l) \Vert_{L^2(\Gamma)} + \Vert \bP \bv_h^l \Vert_{L^2(\Gamma)} 
= \Vert E_T(\bv_h^l) \Vert_{L^2(\Gamma)} + \Vert \bP \bv_h^l \Vert_{L^2(\Gamma)} \\
&\hspace*{4.8mm}\lesssim \Vert \left(E_T(\bv_h^l) \right)^e \Vert_{L^2(\Gamma_h)} + \Vert \bP \bv_h \Vert_{L^2(\Gamma_h)} \\
&\hspace*{4.8mm}\lesssim \Vert E_{T,h}(\bv_h) \Vert_{L^2(\Gamma_h)} + \Vert \left(E_T(\bv_h^l) \right)^e - E_{T,h}(\bv_h) \Vert_{L^2(\Gamma_h)} + \Vert \bP \bv_h \Vert_{L^2(\Gamma_h)} \\
&\hspace*{4.8mm}\lesssim \Vert E_{T,h}(\bv_h) \Vert_{L^2(\Gamma_h)} + h^{k_g} \left( \Vert  \bv_h \Vert_{H^1(\Gamma_h)} + h^{-1} \Vert \bv_h \cdot \bn_h \Vert_{L^2(\Gamma_h)} \right) + \Vert \bP \bv_h \Vert_{L^2(\Gamma_h)}. 
%
\end{split}
\]
For $h$ sufficiently small the term $ h^{k_g}  \Vert  \bv_h \Vert_{H^1(\Gamma_h)}$ can be moved to the left hand side in \eqref{eqdiscreteKornsplit}. Hence, for the term $\Vert \bP \bv_h^l \Vert_{H^1(\Gamma)}$ we have a desired bound as in \eqref{discKorn1}. 
We now treat the normal component $\Vert  \bv_h^l \cdot \bn \Vert_{H^1(\Gamma)}$. Note that
\begin{equation} \label{eqnormalpart} \begin{split}
& \Vert  \bv_h^l \cdot \bn \Vert_{H^1(\Gamma)} 
\lesssim \Vert \bv_h^l \cdot \bn \Vert_{L^2(\Gamma)} + \Vert \nabla_\Gamma ( \bv_h^l\cdot \bn) \Vert_{L^2(\Gamma)} \\
 & \lesssim \Vert \bv_h \Vert_{L^2(\Gamma_h)} +\Vert \nabla_{\Gamma_h} ( \bv_h \cdot \bn) \Vert_{L^2(\Gamma_h)} 
 \lesssim \Vert \bv_h \Vert_{L^2(\Gamma_h)} +\Vert\bP_h (\nabla  \bv_h)^T \bn\Vert_{L^2(\Gamma_h)}.
\end{split}
\end{equation}
We introduce the linear parametric interpololation of $\bn$, $\hat \bn_h:= I_\Theta^1 \bn$. For this interpolation we have
\[
  \|\nabla \hat \bn_h \|_{L^\infty(\Omega_{\Theta}^{\Gamma})} \lesssim 1, ~~\|\hat \bn_h -  \bn \|_{L^\infty(\Omega_{\Theta}^{\Gamma})} \lesssim h, ~~
\|\hat \bn_h - \bn_h \|_{L^\infty(\Omega_{\Theta}^{\Gamma})} \lesssim h.
\]
Note that $\bv_h \cdot \hat\bn_h \in V_{h,\Theta}^{k+1}$. We obtain:
\begin{align*}
 \Vert \bP_h (\nabla  \bv_h)^T \bn\Vert_{L^2(\Gamma_h)} &\lesssim \Vert  (\nabla \bv_h)^T \hat \bn_h\Vert_{L^2(\Gamma_h)} + h \Vert  \nabla \bv_h \Vert_{L^2(\Gamma_h)} \\
 & \lesssim \Vert  \nabla (\bv_h \cdot \hat \bn_h)\Vert_{L^2(\Gamma_h)} + \Vert  \bv_h \Vert_{L^2(\Gamma_h)}+ h \Vert  \nabla \bv_h \Vert_{L^2(\Gamma_h)}.
\end{align*}
Using this in  \eqref{eqnormalpart} and applying the estimate \eqref{HH4a} yields
\begin{equation} \label{ee3}
  \Vert  \bv_h^l \cdot \bn \Vert_{H^1(\Gamma)} \lesssim \Vert \bv_h \Vert_{L^2(\Gamma_h)} + h^\frac12 \Vert \nabla \bv_h \bn_h\Vert_{L^2(\Omega_{\Theta}^{\Gamma})} + \Vert  \nabla (\bv_h \cdot \hat \bn_h)\Vert_{L^2(\Gamma_h)}.
\end{equation}
Using \eqref{HH4} and \eqref{HH2} we get
\begin{align*}
 &  \Vert  \nabla (\bv_h \cdot \hat \bn_h)\Vert_{L^2(\Gamma_h)}  \lesssim h^{-1} \Vert  \bv_h \cdot \hat \bn_h\Vert_{L^2(\Gamma_h)} + h^{-\frac12}
\Vert  \bn_h \cdot \nabla (\bv_h \cdot \hat \bn_h) \Vert_{L^2(\Omega_{\Theta}^{\Gamma})} \\
 & \lesssim \Vert  \bv_h \Vert_{L^2(\Gamma_h)} + h^{-1}  \Vert  \bv_h \cdot  \bn_h\Vert_{L^2(\Gamma_h)} + h^{-\frac12}
\Vert  \nabla \bv_h  \hat \bn_h \Vert_{L^2(\Omega_{\Theta}^{\Gamma})} + h^{-\frac12}
\Vert  \bv_h  \Vert_{L^2(\Omega_{\Theta}^{\Gamma})} \\
& \lesssim \Vert  \bv_h \Vert_{L^2(\Gamma_h)} + h^{-1}  \Vert  \bv_h \cdot  \bn_h\Vert_{L^2(\Gamma_h)} + h^{-\frac12}
\Vert  \nabla \bv_h   \bn_h \Vert_{L^2(\Omega_{\Theta}^{\Gamma})} + h^{-\frac12}
\Vert  \bv_h  \Vert_{L^2(\Omega_{\Theta}^{\Gamma})} \\
 & \lesssim \Vert  \bv_h \Vert_{L^2(\Gamma_h)} + h^{-1}\Vert  \bv_h \cdot \bn_h \Vert_{L^2(\Gamma_h)} + h^{-\frac12}
\Vert  \nabla \bv_h   \bn_h \Vert_{L^2(\Omega_{\Theta}^{\Gamma})}. 
\end{align*}
From this and \eqref{ee3} we get
\[ \begin{split}
 \Vert  \bv_h^l \cdot \bn \Vert_{H^1(\Gamma)}  & \lesssim \Vert \bv_h \Vert_{L^2(\Gamma_h)}+ h^{-1}\Vert  \bv_h \cdot \bn_h \Vert_{L^2(\Gamma_h)} + h^{-\frac12}
\Vert  \nabla \bv_h   \bn_h \Vert_{L^2(\Omega_{\Theta}^{\Gamma})} \\
 & \lesssim \Vert \bP_h\bv_h \Vert_{L^2(\Gamma_h)}+ h^{-1}\Vert  \bv_h \cdot \bn_h \Vert_{L^2(\Gamma_h)} + h^{-\frac12}
\Vert  \nabla \bv_h   \bn_h \Vert_{L^2(\Omega_{\Theta}^{\Gamma})},
 \end{split}
\]
hence, also for  the normal part we have a bound as in \eqref{discKorn1}, which completes the proof.
\end{proof}

\begin{corollary} \label{corolKorn}
For $h$ sufficiently small and for arbitrary $\bv_h \in \bV_{h,\Theta}^k$ we have
 \begin{align}
\Vert \bv_h \Vert_{H^1(\Gamma_h)} &\lesssim \Vert E_{T,h}(\bv_h) \Vert_{L^2(\Gamma_h)} + \Vert \bP_h \bv_h \Vert_{L^2(\Gamma_h)} + h^{-1} \Vert \bv_h \cdot \tilde{\bn}_h \Vert_{L^2(\Gamma_h)} \label{discKorn1a} \\
&\hspace*{4.8mm}+ h^{-\frac{1}{2}} \Vert \nabla \bv_h \bn_h \Vert_{L^2(\Omega_\Theta^\Gamma)},  \nonumber\\
\Vert \bv_h \Vert_{H^1(\Gamma_h)} &\lesssim \Vert E_{h}(\bv_h) \Vert_{L^2(\Gamma_h)} + \Vert \bv_h \Vert_{L^2(\Gamma_h)} + h^{-1} \Vert \bv_h \cdot \tilde{\bn}_h \Vert_{L^2(\Gamma_h)} \label{discKorn2}\\
&\hspace*{4.8mm}+ h^{-\frac{1}{2}} \Vert \nabla \bv_h \bn_h \Vert_{L^2(\Omega_\Theta^\Gamma)}. \nonumber
\end{align}
\end{corollary}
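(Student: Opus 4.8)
The plan is to derive both inequalities directly from the discrete Korn inequality of Lemma~\ref{lemmadiscretekorninequality}, treating the corollary as two refinements of that estimate. Inequality~\eqref{discKorn1a} differs from \eqref{discKorn1} only in that the normal $\bn_h$ appearing in the term $h^{-1}\Vert \bv_h \cdot \bn_h\Vert_{L^2(\Gamma_h)}$ is replaced by $\tilde{\bn}_h$, while inequality~\eqref{discKorn2} in addition replaces $E_{T,h}(\bv_h)$ by $E_h(\bv_h)$ and $\bP_h\bv_h$ by $\bv_h$. So the whole proof reduces to controlling these two substitutions by quantities that already appear on the right-hand side of \eqref{discKorn1}.

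For the first substitution I would write $\bv_h\cdot\bn_h = \bv_h\cdot\tilde{\bn}_h + \bv_h\cdot(\bn_h - \tilde{\bn}_h)$ and use that, by Lemma~\ref{lemmanormals} together with the assumption $\Vert\bn - \tilde{\bn}_h\Vert_{L^\infty(\Gamma_h)}\lesssim h^{k_p}$ and $k_p\ge k_g$, one has $\Vert \bn_h - \tilde{\bn}_h\Vert_{L^\infty(\Gamma_h)}\lesssim h^{k_g}$. This yields
\[
 h^{-1}\Vert \bv_h\cdot\bn_h\Vert_{L^2(\Gamma_h)} \le h^{-1}\Vert \bv_h\cdot\tilde{\bn}_h\Vert_{L^2(\Gamma_h)} + c\,h^{k_g-1}\Vert \bv_h\Vert_{L^2(\Gamma_h)}.
\]
The delicate point, which I expect to be the only genuine obstacle, is that for the lowest order geometry $k_g=1$ the factor $h^{k_g-1}$ is $O(1)$ and cannot be made small, so the last term is not immediately absorbable. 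I would resolve this by splitting $\Vert\bv_h\Vert_{L^2(\Gamma_h)}\le \Vert\bP_h\bv_h\Vert_{L^2(\Gamma_h)} + \Vert\bv_h\cdot\bn_h\Vert_{L^2(\Gamma_h)}$ (orthogonality with respect to $\bn_h$), which turns the estimate, with $X:=h^{-1}\Vert\bv_h\cdot\bn_h\Vert_{L^2(\Gamma_h)}$, into $X \le h^{-1}\Vert \bv_h\cdot\tilde{\bn}_h\Vert_{L^2(\Gamma_h)} + c\,h^{k_g-1}\Vert\bP_h\bv_h\Vert_{L^2(\Gamma_h)} + c\,h^{k_g}X$. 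Now the $h^{k_g}X$ term is genuinely small and can be absorbed into the left-hand side for $h$ small, and since $k_g\ge 1$ gives $h^{k_g-1}\lesssim 1$ this produces $h^{-1}\Vert\bv_h\cdot\bn_h\Vert_{L^2(\Gamma_h)}\lesssim h^{-1}\Vert\bv_h\cdot\tilde{\bn}_h\Vert_{L^2(\Gamma_h)} + \Vert\bP_h\bv_h\Vert_{L^2(\Gamma_h)}$. Substituting this into \eqref{discKorn1} and noting that $\Vert\bP_h\bv_h\Vert_{L^2(\Gamma_h)}$ already occurs on the right-hand side of \eqref{discKorn1a} gives the first inequality.

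For \eqref{discKorn2} I would start from the just-proved \eqref{discKorn1a} and use the identity $E_h(\bv_h)=E_{T,h}(\bv_h)+v_N\bH_h$ coming from the definition of $E_{T,h}$, together with the boundedness of $\bH_h$ (it approximates the bounded $\bH$, cf.~\eqref{eqapproxH}), to obtain $\Vert E_{T,h}(\bv_h)\Vert_{L^2(\Gamma_h)}\lesssim \Vert E_h(\bv_h)\Vert_{L^2(\Gamma_h)} + \Vert \bv_h\Vert_{L^2(\Gamma_h)}$, where I use $\Vert v_N\Vert_{L^2(\Gamma_h)}\le\Vert\bv_h\Vert_{L^2(\Gamma_h)}$ since the relevant normal is a unit vector. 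Combined with the trivial bound $\Vert\bP_h\bv_h\Vert_{L^2(\Gamma_h)}\le\Vert\bv_h\Vert_{L^2(\Gamma_h)}$, substituting both into \eqref{discKorn1a} replaces $E_{T,h}$ and $\bP_h\bv_h$ by $E_h$ and $\bv_h$ and yields \eqref{discKorn2}. Everything apart from the absorption argument in the $k_g=1$ case is routine recombination of terms already present in Lemma~\ref{lemmadiscretekorninequality}.
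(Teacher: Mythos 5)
Your proof is correct and follows essentially the same route as the paper: the paper's own proof simply states the bound $\Vert \bv_h \cdot \bn_h\Vert_{L^2(\Gamma_h)} \lesssim \Vert \bv_h \cdot \tilde{\bn}_h\Vert_{L^2(\Gamma_h)} + h^{k_g}\Vert \bv_h\Vert_{L^2(\Gamma_h)}$ and declares \eqref{discKorn1a} a consequence of \eqref{discKorn1}, then obtains \eqref{discKorn2} from the definition of $E_{T,h}$ and a triangle inequality, exactly as you do. Your explicit splitting $\Vert\bv_h\Vert_{L^2(\Gamma_h)}\le\Vert\bP_h\bv_h\Vert_{L^2(\Gamma_h)}+\Vert\bv_h\cdot\bn_h\Vert_{L^2(\Gamma_h)}$ followed by absorption of the $h^{k_g}$-term is a correct and welcome filling-in of the detail the paper leaves implicit, which is indeed needed when $k_g=1$.
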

\begin{proof} 
Note that $\|\bv_h \cdot \bn_h\|_{L^2(\Gamma_h)} \lesssim \|\bv_h \cdot \tilde \bn_h\|_{L^2(\Gamma_h)} + h^{k_g}\|\bv_h\|_{L^2(\Gamma_h)}$. Hence, the result \eqref{discKorn1a} is a consequence of \eqref{discKorn1}.
Using the definitions of $E_{T,h}(\cdot)$, $E_h(\cdot)$ and a triangle inequality the result \eqref{discKorn2} immediately follows from \eqref{discKorn1a}. 
 \end{proof}
\ \\
\begin{remark} \rm 
From the proof one can see that in the estimate \eqref{discKorn1} the part $\Vert E_{T,h}(\bv_h) \Vert_{L^2(\Gamma_h)} + \Vert \bP_h \bv_h \Vert_{L^2(\Gamma_h)}$ is the key term  to bound the $H^1(\Gamma_h)$ norm of the tangential component of the vector function $\bv_h$, and the part  $h^{-1} \Vert \bv_h \cdot {\bn}_h \Vert_{L^2(\Gamma_h)}+ h^{-\frac{1}{2}} \Vert \nabla \bv_h \bn_h \Vert_{L^2(\Omega_\Theta^\Gamma)}$ is essential to bound the normal component.
\end{remark}

\subsubsection{Consistency error of the penalty methods \eqref{discretepenaltyform1} and \eqref{discretepenaltyform2}} \label{sectcons1}
Based on the results obtained in the previous sections the derivation of satisfactory consistency error bounds is straightforward. In this section we derive these bounds for the two penalty methods. Using the definitions of the bilinear forms $A_h^{P_i}(\cdot,\cdot)$, $i=1,2$, we obtain from Corollary~\ref{corolKorn}, for $\rho \sim h^{-1}$ and $\eta \gtrsim h^{-2}$:
\begin{align} \label{K1}
 \|\bv_h \|_{H^1(\Gamma_h)}^2 \lesssim A_h^{P_i}(\bv_h, \bv_h) , \quad \text{for all}~~\bv_h \in \bV_{h,\Theta}^k.
\end{align}

\begin{lemma} \label{lemmaconsistencyerror}
Let $\bu = \bu_T\in \bV_T$ be the unique solution of problem \eqref{contform}. We  assume that the data error satisfies $\Vert \vert \bB\vert \bbf^e - \bbf_h \Vert_{L^2(\Gamma_h)} \lesssim h^{k_g+1} \Vert \bbf \Vert_{L^2(\Gamma)}$ and $\rho \sim h^{-1}$, $\eta \gtrsim h^{-2}$. Then the following bounds hold
\begin{align}
\sup_{\bw_h \in \bV_{h,\Theta}^{k}} \frac{\vert A_h^{P_1}(\bu^e, \bw_h) - (\bbf_h, \bw_h)_{L^2(\Gamma_h)} \vert}{\Vert \bw_h \Vert_{A_h^{P_1}}} &\lesssim (h^{k_g} + \eta^{\frac{1}{2}}h^{k_p} + \eta^{-\frac{1}{2}}) \Vert \bu \Vert_{H^1(\Gamma)}, \label{eqconsp1}\\ 
\sup_{\bw_h \in \bV_{h,\Theta}^{k}} \frac{\vert A_h^{P_2}(\bu^e, \bw_h) - (\bbf_h, \bw_h)_{L^2(\Gamma_h)} \vert}{\Vert \bw_h \Vert_{A_h^{P_2}}} &\lesssim (h^{k_g} + \eta^{\frac{1}{2}}h^{k_p}) \Vert \bu \Vert_{H^1(\Gamma)} \label{eqconsp2}.
\end{align}
\end{lemma}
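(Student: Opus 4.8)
The plan is to start from the consistency-term decompositions \eqref{consterm1} and \eqref{consterm2}, which express the numerator $A_h^{P_i}(\bu^e,\bw_h)-(\bbf_h,\bw_h)_{L^2(\Gamma_h)}$ as a sum of a geometry error ($G_a$ for $i=1$, $G_{a_T}$ for $i=2$), the stabilization term $s_h(\bu^e,\bw_h)$, the penalty term $k_h(\bu^e,\bw_h)$, the data error $G_f(\bw_h)$, and, only in the inconsistent case, the extra term $(E(\bu),E((\bw_h^l\cdot\bn)\bn))_{L^2(\Gamma)}$. I would bound each summand by some factor times $\|\bw_h\|_{A_h^{P_i}}$, so that after dividing and taking the supremum the factor $\|\bw_h\|_{A_h^{P_i}}$ cancels. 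Throughout, the discrete Korn inequality \eqref{K1} (valid under $\rho\sim h^{-1}$, $\eta\gtrsim h^{-2}$) is the workhorse converting $\|\bw_h\|_{H^1(\Gamma_h)}$ into $\|\bw_h\|_{A_h^{P_i}}$, while the definitions of the norms give $s_h(\bw_h,\bw_h)^{\frac12}\le\|\bw_h\|_{A_h^{P_i}}$ and $k_h(\bw_h,\bw_h)^{\frac12}=\eta^{\frac12}\|\bw_h\cdot\tilde{\bn}_h\|_{L^2(\Gamma_h)}\le\|\bw_h\|_{A_h^{P_i}}$.

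For the geometry errors I would invoke Lemma~\ref{lemmageometryapproximationerrors}. The bound $|G_a(\bu^e,\bw_h)|\lesssim h^{k_g}\|\bu^e\|_{H^1(\Gamma_h)}\|\bw_h\|_{H^1(\Gamma_h)}$ yields $h^{k_g}\|\bu\|_{H^1(\Gamma)}$ after using $\|\bu^e\|_{H^1(\Gamma_h)}\lesssim\|\bu\|_{H^1(\Gamma)}$ and \eqref{K1}. The bound for $G_{a_T}$ is analogous but carries the extra factors $h^{-1}\|\bu^e\cdot\bn_h\|_{L^2(\Gamma_h)}$ and $h^{-1}\|\bw_h\cdot\bn_h\|_{L^2(\Gamma_h)}$; here the crucial observations are that tangentiality of $\bu$ gives $\bu^e\cdot\bn=0$, hence $\bu^e\cdot\bn_h=\bu^e\cdot(\bn_h-\bn)$ and, by Lemma~\ref{lemmanormals}, $h^{-1}\|\bu^e\cdot\bn_h\|_{L^2(\Gamma_h)}\lesssim h^{k_g-1}\|\bu\|_{L^2(\Gamma)}\lesssim\|\bu\|_{H^1(\Gamma)}$ for $k_g\ge1$, while on the test side $\|\bn_h-\tilde{\bn}_h\|_{L^\infty(\Gamma_h)}\lesssim h^{k_g}$ together with $\eta^{-\frac12}\lesssim h$ gives $h^{-1}\|\bw_h\cdot\bn_h\|_{L^2(\Gamma_h)}\lesssim\|\bw_h\|_{A_h^{P_2}}$. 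The data error obeys $|G_f(\bw_h)|\lesssim h^{k_g+1}\|\bbf\|_{L^2(\Gamma)}\|\bw_h\|_{L^2(\Gamma_h)}$, a higher-order contribution which (absorbing the fixed data factor $\|\bbf\|_{L^2(\Gamma)}$) is dominated by the leading $h^{k_g}\|\bu\|_{H^1(\Gamma)}$ term.

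The stabilization and penalty terms are controlled by the same two structural facts about the normal extension. Since $\bu^e$ is constant along the exact normal, $\nabla\bu^e\bn=0$, so $\nabla\bu^e\bn_h=\nabla\bu^e(\bn_h-\bn)$; combined with Lemma~\ref{lemmanormals}, \eqref{lemmasobolevnormsneighborhood} and $\rho\sim h^{-1}$ this gives $s_h(\bu^e,\bu^e)^{\frac12}\lesssim\rho^{\frac12}h^{k_g}h^{\frac12}\|\bu\|_{H^1(\Gamma)}\sim h^{k_g}\|\bu\|_{H^1(\Gamma)}$, and the Cauchy--Schwarz split $s_h(\bu^e,\bw_h)\le s_h(\bu^e,\bu^e)^{\frac12}s_h(\bw_h,\bw_h)^{\frac12}$ closes this term. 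Likewise $\bu^e\cdot\bn=0$ gives $\bu^e\cdot\tilde{\bn}_h=\bu^e\cdot(\tilde{\bn}_h-\bn)$, so by the assumption $\|\bn-\tilde{\bn}_h\|_{L^\infty(\Gamma_h)}\lesssim h^{k_p}$ one obtains $k_h(\bu^e,\bu^e)^{\frac12}=\eta^{\frac12}\|\bu^e\cdot\tilde{\bn}_h\|_{L^2(\Gamma_h)}\lesssim\eta^{\frac12}h^{k_p}\|\bu\|_{H^1(\Gamma)}$, which after Cauchy--Schwarz produces the $\eta^{\frac12}h^{k_p}$ contribution present in both bounds. Collecting these estimates already yields \eqref{eqconsp2}.

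It remains to treat the inconsistency term appearing only in \eqref{consterm1}, which is precisely the source of the additional $\eta^{-\frac12}$ in \eqref{eqconsp1} and is the most delicate point. Using identity \eqref{identi} for the purely normal field $(\bw_h^l\cdot\bn)\bn$ gives $E((\bw_h^l\cdot\bn)\bn)=(\bw_h^l\cdot\bn)\bH$, so Cauchy--Schwarz and boundedness of $\bH$ reduce this term to $\|\bu\|_{H^1(\Gamma)}\,\|\bw_h^l\cdot\bn\|_{L^2(\Gamma)}$. The task is then to bound the normal defect $\|\bw_h^l\cdot\bn\|_{L^2(\Gamma)}\sim\|\bw_h\cdot\bn\|_{L^2(\Gamma_h)}$ by the energy norm; since $\bw_h$ is a general, non-tangential finite element field, the only available control is the weak penalty. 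Splitting $\bn=\tilde{\bn}_h+(\bn-\tilde{\bn}_h)$ gives $\|\bw_h\cdot\bn\|_{L^2(\Gamma_h)}\le\|\bw_h\cdot\tilde{\bn}_h\|_{L^2(\Gamma_h)}+h^{k_p}\|\bw_h\|_{L^2(\Gamma_h)}\lesssim(\eta^{-\frac12}+h^{k_p})\|\bw_h\|_{A_h^{P_1}}$, where the first summand uses $\eta^{\frac12}\|\bw_h\cdot\tilde{\bn}_h\|_{L^2(\Gamma_h)}\le\|\bw_h\|_{A_h^{P_1}}$ and the second uses \eqref{K1}; since $h^{k_p}\le\eta^{\frac12}h^{k_p}$ this is consistent with \eqref{eqconsp1}. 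This is exactly why the consistent method, whose tangential projection annihilates this term, avoids the $\eta^{-\frac12}$ loss. The main obstacle throughout is the bookkeeping of the three normals $\bn,\bn_h,\tilde{\bn}_h$ and verifying that each enters with the sharp power ($h^{k_g}$ for $\bn_h$ in the geometry and stabilization terms, $h^{k_p}$ for $\tilde{\bn}_h$ in the penalty terms); ensuring that the penalty-controlled defect scales as $\eta^{-\frac12}$ rather than worse is the crux distinguishing the two penalty methods.
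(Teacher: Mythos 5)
Your proposal is correct and follows essentially the same route as the paper's proof: the same decompositions \eqref{consterm1}--\eqref{consterm2}, term-by-term estimates via Lemma~\ref{lemmageometryapproximationerrors}, the tangentiality identities $\bu^e\cdot\bn=0$ and $\nabla\bu^e\,\bn=0$, the discrete Korn inequality \eqref{K1}, and the splitting $\bn=\tilde{\bn}_h+(\bn-\tilde{\bn}_h)$ for the inconsistency term yielding the $\eta^{-\frac12}$ loss. No gaps.
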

\begin{proof}
We start with \eqref{eqconsp2}. Take $\bw_h \in \bV_{h,\Theta}^{k}$. We have, cf. \eqref{consterm2},
\begin{equation*} \begin{split}
A_h^{P_2}(\bu^e, \bw_h) - (\bbf_h, \bw_h)_{L^2(\Gamma_h)}= G_{a_T}(\bu^e, \bw_h) + s_h(\bu^e,\bw_h) + k_h(\bu^e,\bw_h) + G_f(\bw_h).
\end{split}
\end{equation*}
Using Lemma \ref{lemmageometryapproximationerrors}, \eqref{K1} and $\Vert \bu^e \cdot \bn_h \Vert_{L^2(\Gamma_h)}= \Vert \bu^e \cdot (\bn_h- \bn) \Vert_{L^2(\Gamma_h)}\lesssim h^{k_g} \|\bu^e\|_{L^2(\Gamma_h)}$,   we get 
\begin{align} 
&\vert G_{a_T}(\bu^e, \bw_h) \vert \nonumber \\
&\hspace*{4.8mm}\lesssim h^{k_g} \left(\Vert \bu^e \Vert_{H^1(\Gamma_h)} + h^{-1} \Vert \bu^e \cdot \bn_h \Vert_{L^2(\Gamma_h)}\right) \left(\Vert \bw_h \Vert_{H^1(\Gamma_h)} + h^{-1} \Vert \bw_h \cdot \bn_h \Vert_{L^2(\Gamma_h)}\right) \nonumber \\
&\hspace*{4.8mm}\lesssim h^{k_g} \Vert \bu \Vert_{H^1(\Gamma)} 
  \left(\Vert \bw_h \Vert_{H^1(\Gamma_h)} +  h^{-1} \Vert \bw_h \cdot \tilde{\bn}_h \Vert_{L^2(\Gamma_h)} \right) \nonumber \\
&\hspace*{4.8mm}\lesssim h^{k_g} \Vert \bu \Vert_{H^1(\Gamma)} \Vert \bw_h \Vert_{A_h^{P_2}}. \label{eqgat}
\end{align}
We also have 
\begin{equation} \label{eqgf} \begin{split}
\vert G_f(\bw_h) \vert &\lesssim h^{k_g+1} \Vert \bbf \Vert_{L^2(\Gamma)} \Vert \bw_h \Vert_{L^2(\Gamma_h)} 
\lesssim h^{k_g+1} \Vert \bu \Vert_{H^1(\Gamma)} \Vert \bw_h \Vert_{A_h^{P_2}}.
\end{split}
\end{equation}
Using inequality \eqref{lemmasobolevnormsneighborhood} we obtain
\begin{equation} \label{eqsh} \begin{split}
& \vert s_h(\bu^e,\bw_h) \vert \lesssim h^{-1} \Vert \nabla \bu^e \bn_h \Vert_{L^2(\Omega_\Theta^\Gamma)} \Vert \nabla \bw_h \bn_h \Vert_{L^2(\Omega_\Theta^\Gamma)} \\
& \lesssim h^{-1} \Vert \nabla \bu^e (\bn_h - \bn) \Vert_{L^2(\Omega_\Theta^\Gamma)} \Vert \nabla \bw_h \bn_h \Vert_{L^2(\Omega_\Theta^\Gamma)} \\
&\lesssim h^{k_g-1} \Vert \nabla \bu^e \Vert_{L^2(\Omega_\Theta^\Gamma)} \Vert \nabla \bw_h \bn_h \Vert_{L^2(\Omega_\Theta^\Gamma)} \\
&\lesssim h^{k_g-\frac{1}{2}} \Vert \bu \Vert_{H^1(\Gamma)} \Vert \nabla \bw_h \bn_h \Vert_{L^2(\Omega_\Theta^\Gamma)} 
\lesssim h^{k_g} \Vert \bu \Vert_{H^1(\Gamma)} \Vert \bw_h \Vert_{A_h^{P_2}}.
\end{split}
\end{equation}
The penalty term can be estimated as follows:
\begin{equation*} \begin{split}
& \vert k_h(\bu^e,\bw_h) \vert \lesssim \eta \Vert \bu^e \cdot \tilde{\bn}_h \Vert_{L^2(\Gamma_h)} \Vert \bw_h \cdot \tilde{\bn}_h \Vert_{L^2(\Gamma_h)} \\
&\lesssim \eta^{\frac{1}{2}} \Vert \bu^e \cdot (\tilde{\bn}_h - \bn) \Vert_{L^2(\Gamma_h)} \Vert \bw_h \Vert_{A_h^{P_2}} 
\lesssim \eta^{\frac{1}{2}}h^{k_p} \Vert \bu \Vert_{L^2(\Gamma)} \Vert \bw_h \Vert_{A_h^{P_2}}. 
\end{split}
\end{equation*}
Combining these estimates completes the proof for \eqref{eqconsp2}. Next we show \eqref{eqconsp1}. Recall that, cf.~\eqref{consterm1},
\begin{equation*} \begin{split}
&A_h^{P_1}(\bu^e, \bw_h) - (\bbf_h, \bw_h)_{L^2(\Gamma_h)} \\
& = G_{a}(\bu^e, \bw_h) + ( E(\bu) , E((\bw_h^l \cdot \bn)\bn))_{L^2(\Gamma)} + s_h(\bu^e,\bw_h) + k_h(\bu^e,\bw_h) + G_f(\bw_h).
\end{split}
\end{equation*}
The terms $G_{a}(\cdot,\cdot)$, $s_h(\cdot,\cdot)$, $k_h(\cdot,\cdot)$ and $G_f(\cdot)$ can be estimated as above. We treat the remaining term. Note that (cf.~\eqref{identi}), 
$
E((\bw_h^l \cdot \bn)\bn) = (\bw_h^l \cdot \bn)\bH$.
Using the lemmas \ref{lemmaB} and \ref{lemmanormequivalences} we get
\begin{align} 
&\vert ( E(\bu) , (\bw_h^l \cdot \bn)\bH)_{L^2(\Gamma)} \vert \nonumber  \\ & = \vert  ( \vert \bB \vert (E(\bu))^e , (\bw_h \cdot \bn)\bH)_{L^2(\Gamma_h)} \vert
\lesssim \Vert \bu^e \Vert_{H^1(\Gamma_h)} \Vert \bw_h \cdot \bn \Vert_{L^2(\Gamma_h)} \nonumber \\
&\lesssim \Vert \bu \Vert_{H^1(\Gamma)} \Vert \bw_h \cdot \bn \Vert_{L^2(\Gamma_h)} 
\lesssim \Vert \bu \Vert_{H^1(\Gamma)} (\Vert \bw_h \cdot (\bn - \tilde{\bn}_h) \Vert_{L^2(\Gamma_h)} + \Vert \bw_h \cdot \tilde{\bn}_h \Vert_{L^2(\Gamma_h)})   \nonumber \\
&\lesssim \Vert \bu \Vert_{H^1(\Gamma)} (h^{k_p}\Vert \bw_h \Vert_{L^2(\Gamma_h)} + \Vert \bw_h \cdot \tilde{\bn}_h \Vert_{L^2(\Gamma_h)}) \nonumber \\
& \lesssim  (h^{k_p} + \eta^{-\frac12})\Vert \bu \Vert_{H^1(\Gamma)} \Vert \bw_h \Vert_{A_h^{P_1}}. \label{consA}
\end{align}
Since $k_p \geq k_g$ we get \eqref{eqconsp1}.
\end{proof}

In Lemma~\ref{lemmaconsistencyerror}, for the stability and penalty parameters we restrict to $\rho \sim  h^{-1}$ and $\eta \gtrsim h^{-2}$. For these parameter values we then have  the estimate \eqref{K1}, which is used at several places in the proof. 

\subsubsection{Consistency error of the Lagrange Method \eqref{discretelagrangeform}} \label{sectcons2}
In this section we derive bounds for the Lagrange multiplier method. For this method we do not have an analog of \eqref{K1} of the form $\|\bv_h \|_{H^1(\Gamma_h)}^2 \lesssim A_h^{L}(\bv_h, \bv_h)$ for all $\bv_h \in \bV_{h,\Theta}^k$. Such an estimate is problematic, because the term $h^{-1} \|\bv_h \cdot \bn_h\|_{L^2(\Gamma_h)}$ that occurs in the discrete Korn's type inequality \eqref{discKorn1} can not be controlled by the bilinear form $A_h^L(\cdot,\cdot)$. Instead we only have the (weaker) bound
\begin{equation} \label{K1a}
 \|\bv_h \|_{H^1(\Gamma_h)}^2 \lesssim  h^{-2} A_h^{L}(\bv_h, \bv_h) \quad \text{for all}~~\bv_h \in \bV_{h,\Theta}^k,
\end{equation}
which follows from \eqref{HH4a} and the definition of $A_h^L(\cdot,\cdot)$, cf. Remark~\ref{rembound}.
\begin{lemma} \label{lemmaconsistencyerrorlagrange}
Let $(\bu, \lambda) \in \bV_* \times L^2(\Gamma)$ be the unique solution of problem \eqref{contformlagrange}. We further assume that the data error satisfies $\Vert \vert \bB\vert \bbf^e - \bbf_h \Vert_{L^2(\Gamma_h)} \lesssim h^{k_g+1} \Vert \bbf \Vert_{L^2(\Gamma)}$ and Assumption \ref{assumptionrhoL} holds. Then we obtain the following bound:
\begin{equation*} \begin{split}
&\sup_{(\bw_h,\mu_h)  \in \bV_{h,\Theta}^{k} \times V_{h,\Theta}^{k_l}} \frac{\vert \mathcal{A}_h((\bu^e,\lambda^e), (\bw_h,\mu_h)) - (\bbf_h, \bw_h)_{L^2(\Gamma_h)} \vert}{\left(\Vert \bw_h \Vert_{A_h^L}^2 + \Vert \mu_h \Vert_{M}^2\right)^\frac{1}{2}} \\
&\hspace*{4.8mm}\lesssim h^{k_g-1}  \Vert \bu \Vert_{H^1(\Gamma)} + h^{k_g} \Vert \lambda \Vert_{H^1(\Gamma)}.
\end{split}
\end{equation*}
\end{lemma}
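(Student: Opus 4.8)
The plan is to start from the decomposition \eqref{consterm3}, which splits the consistency term into the six contributions $G_a(\bu^e,\bw_h)$, $b_h(\bw_h,\lambda^e)$, $b_h(\bu^e,\mu_h)$, $s_h(\bu^e,\bw_h)$, $-(\bw_h^l\cdot\bn,\lambda)_{L^2(\Gamma)}$ and $G_f(\bw_h)$, and then to bound each one separately against $\big(\Vert\bw_h\Vert_{A_h^L}^2+\Vert\mu_h\Vert_{M}^2\big)^{1/2}$. Throughout I would use the elementary consequences of the norm definitions, namely $\Vert\bw_h\Vert_{L^2(\Gamma_h)}\le\Vert\bw_h\Vert_{A_h^L}$, $\rho^{1/2}\Vert\nabla\bw_h\bn_h\Vert_{L^2(\Omega_{\Theta}^{\Gamma})}\le\Vert\bw_h\Vert_{A_h^L}$, and the analogous $\Vert\mu_h\Vert_{L^2(\Gamma_h)}\le\Vert\mu_h\Vert_{M}$, $\rho^{1/2}\Vert\bn_h\cdot\nabla\mu_h\Vert_{L^2(\Omega_{\Theta}^{\Gamma})}\le\Vert\mu_h\Vert_{M}$, together with the norm equivalences of Lemma~\ref{lemmanormequivalences} and $\Vert\bu^e\Vert_{H^1(\Gamma_h)}\lesssim\Vert\bu\Vert_{H^1(\Gamma)}$ to pass from $\bu^e,\lambda^e$ back to $\bu,\lambda$.

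For the terms involving only $\bu$, the geometry error $G_a(\bu^e,\bw_h)$ is decisive. By Lemma~\ref{lemmageometryapproximationerrors} it is bounded by $h^{k_g}\Vert\bu^e\Vert_{H^1(\Gamma_h)}\Vert\bw_h\Vert_{H^1(\Gamma_h)}\lesssim h^{k_g}\Vert\bu\Vert_{H^1(\Gamma)}\Vert\bw_h\Vert_{H^1(\Gamma_h)}$, and here — in contrast to the penalty analysis — I can only invoke the \emph{weaker} discrete Korn bound \eqref{K1a}, $\Vert\bw_h\Vert_{H^1(\Gamma_h)}\lesssim h^{-1}\Vert\bw_h\Vert_{A_h^L}$, which yields $\vert G_a(\bu^e,\bw_h)\vert\lesssim h^{k_g-1}\Vert\bu\Vert_{H^1(\Gamma)}\Vert\bw_h\Vert_{A_h^L}$. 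The stabilization $s_h(\bu^e,\bw_h)$ is handled exactly as in \eqref{eqsh}: since $\nabla\bu^e\bn=0$ one writes $\nabla\bu^e\bn_h=\nabla\bu^e(\bn_h-\bn)$, gaining a factor $h^{k_g}$, and then \eqref{lemmasobolevnormsneighborhood} together with $\rho\lesssim h^{-1}$ gives $\vert s_h(\bu^e,\bw_h)\vert\lesssim h^{k_g}\Vert\bu\Vert_{H^1(\Gamma)}\Vert\bw_h\Vert_{A_h^L}$. For $b_h(\bu^e,\mu_h)=(\bu^e\cdot\bn_h,\mu_h)_{L^2(\Gamma_h)}+\tilde s_h(\bu^e,\mu_h)$ I would use that $\bu$ is tangential, so $\bu^e\cdot\bn=0$ and $\nabla\bu^e\bn=0$ on $\Gamma_h$; replacing $\bn_h$ by $\bn_h-\bn$ in both pieces gains $h^{k_g}$ via Lemma~\ref{lemmanormals}, giving $\vert b_h(\bu^e,\mu_h)\vert\lesssim h^{k_g}\Vert\bu\Vert_{H^1(\Gamma)}\Vert\mu_h\Vert_{M}$.

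The genuinely new contribution is the pairing $b_h(\bw_h,\lambda^e)-(\bw_h^l\cdot\bn,\lambda)_{L^2(\Gamma)}$. I would transform the $\Gamma$-integral to $\Gamma_h$ using $d\Gamma=\vert\bB\vert\,d\Gamma_h$ together with $\lambda\circ p=\lambda^e$, $\bw_h^l\circ p=\bw_h$, $\bn\circ p=\bn$, rewriting it as $\int_{\Gamma_h}(\bw_h\cdot\bn)\lambda^e\vert\bB\vert\,ds_h$; combining this with $(\bw_h\cdot\bn_h,\lambda^e)_{L^2(\Gamma_h)}$ produces the integrand $\big[\bw_h\cdot(\bn_h-\bn)+(\bw_h\cdot\bn)(1-\vert\bB\vert)\big]\lambda^e$, which by $\Vert\bn_h-\bn\Vert_{L^\infty(\Gamma_h)}\lesssim h^{k_g}$ and $\Vert1-\vert\bB\vert\Vert_{L^\infty(\Gamma_h)}\lesssim h^{k_g+1}$ (Lemmas~\ref{lemmanormals} and~\ref{lemmaB}) is $\lesssim h^{k_g}\Vert\bw_h\Vert_{L^2(\Gamma_h)}\Vert\lambda\Vert_{L^2(\Gamma)}$. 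The remaining piece $\tilde s_h(\bw_h,\lambda^e)$ is treated as $s_h$ above, exploiting $\bn\cdot\nabla\lambda^e=0$ to replace $\bn_h$ by $\bn_h-\bn$ and gain $h^{k_g}$; altogether this yields $h^{k_g}\Vert\lambda\Vert_{H^1(\Gamma)}\Vert\bw_h\Vert_{A_h^L}$. Finally $G_f(\bw_h)$ is bounded by Lemma~\ref{lemmageometryapproximationerrors} and $\Vert\bw_h\Vert_{L^2(\Gamma_h)}\le\Vert\bw_h\Vert_{A_h^L}$ as in \eqref{eqgf}, producing an $O(h^{k_g+1})$ term that is absorbed. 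Summing the six bounds gives the asserted estimate.

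The main point — and the reason the $\bu$-part carries $h^{k_g-1}$ rather than the $h^{k_g}$ obtained for the penalty methods — is precisely the forced use of the weaker Korn estimate \eqref{K1a} in bounding $G_a$. Because $A_h^L(\cdot,\cdot)$ contains \emph{no} penalty term $k_h(\cdot,\cdot)$, it cannot control the contribution $h^{-1}\Vert\bw_h\cdot\bn_h\Vert_{L^2(\Gamma_h)}$ to $\Vert\bw_h\Vert_{H^1(\Gamma_h)}$ appearing in the discrete Korn inequality \eqref{discKorn1}, so a full power of $h^{-1}$ must be conceded there. All the other terms avoid this loss because they only require control of $\Vert\bw_h\Vert_{L^2(\Gamma_h)}$ or $\rho^{1/2}\Vert\nabla\bw_h\bn_h\Vert_{L^2(\Omega_{\Theta}^{\Gamma})}$, both of which are directly dominated by $\Vert\bw_h\Vert_{A_h^L}$.
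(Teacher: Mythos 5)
Your proposal is correct and follows essentially the same route as the paper: the same decomposition \eqref{consterm3} into geometry, pairing and stabilization terms, the same use of Lemma~\ref{lemmageometryapproximationerrors} combined with the weaker Korn bound \eqref{K1a} for $G_a$ (which is exactly where the $h^{k_g-1}$ loss enters), and the same $\bn_h-\bn$ and $1-\vert\bB\vert$ replacement arguments for the remaining terms. No gaps.
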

\begin{proof}
Take $(\bw_h,\mu_h)  \in \bV_{h,\Theta}^{k} \times V_{h,\Theta}^{k_l}$. 
Using \eqref{consterm3} we obtain
\begin{equation*} \begin{split}
&\mathcal{A}_h((\bu^e,\lambda^e), (\bw_h,\mu_h)) - (\bbf_h, \bw_h)_{L^2(\Gamma_h)} \\
& =G_a(\bu^e, \bw_h) + b_h(\bw_h,\lambda^e) + b_h(\bu^e, \mu_h) +  s_h(\bu^e,\bw_h) - (\bw_h^l \cdot \bn, \lambda)_{L^2(\Gamma)} + G_f(\bw_h)\\
&\hspace*{4.8mm}= \underbrace{G_a(\bu^e, \bw_h)}_{(1)} + \underbrace{(\bw_h \cdot \bn_h,\lambda^e)_{L^2(\Gamma_h)} - (\bw_h^l \cdot \bn, \lambda)_{L^2(\Gamma)}}_{(2)} + \underbrace{(\bu^e \cdot \bn_h,\mu_h)_{L^2(\Gamma_h)}}_{(3)} \\
&\hspace*{9.6mm}+ \underbrace{s_h(\bu^e,\bw_h)}_{(4)} + \underbrace{\tilde{s}_h(\bw_h,\lambda^e)}_{(5)} + \underbrace{\tilde{s}_h(\bu^e,\mu_h)}_{(6)} + \underbrace{G_f(\bw_h)}_{(7)}.
\end{split}
\end{equation*}
We derive bounds for these seven terms. We start with term $(1)$. Applying Lemma \ref{lemmageometryapproximationerrors} and \eqref{K1a} we get 
\begin{equation*} 
\vert G_a(\bu^e, \bw_h)  \vert \lesssim h^{k_g} \Vert \bu^e \Vert_{H^1(\Gamma_h)} \Vert \bw_h \Vert_{H^1(\Gamma_h)} 
\lesssim h^{k_g-1} \Vert \bu \Vert_{H^1(\Gamma)} \Vert \bw_h \Vert_{A_h^L}.
\end{equation*}
With  Lemma~\ref{lemmaB}  we obtain for term $(2)$
\begin{equation*} \begin{split}
&\vert (\bw_h \cdot \bn_h,\lambda^e)_{L^2(\Gamma_h)} - (\bw_h^l \cdot \bn, \lambda)_{L^2(\Gamma)} \vert \\
&= \vert (\bw_h \cdot \bn_h,\lambda^e)_{L^2(\Gamma_h)} - (\vert \bB \vert (\bw_h \cdot \bn), \lambda^e)_{L^2(\Gamma_h)} \vert  \\
&= \vert (\bw_h \cdot (\bn_h - \bn),\lambda^e)_{L^2(\Gamma_h)} - ((\vert \bB \vert - 1) (\bw_h \cdot \bn), \lambda^e)_{L^2(\Gamma_h)} \vert  
\lesssim  h^{k_g} \Vert \lambda \Vert_{L^2(\Gamma)} \Vert \bw_h \Vert_{A_h^L}. 
\end{split}
\end{equation*}
For term $(3)$ we have
\begin{equation*} \begin{split}
\vert (\bu^e \cdot \bn_h,\mu_h)_{L^2(\Gamma_h)} \vert &= \vert (\bu^e \cdot (\bn_h - \bn),\mu_h)_{L^2(\Gamma_h)}\vert \\
&\lesssim \Vert \bn_h - \bn \Vert_{L^\infty(\Gamma_h)} \Vert \bu^e \Vert_{L^2(\Gamma_h)} \Vert \mu_h \Vert_{L^2(\Gamma_h)} 
\lesssim h^{k_g} \Vert \bu \Vert_{L^2(\Gamma)} \Vert \mu_h \Vert_{M}.
\end{split}
\end{equation*}
The terms $(4)$ and $(7)$ can be estimated as in \eqref{eqsh} and \eqref{eqgf}:
\begin{equation*}
\vert s_h(\bu^e,\bw_h) \vert \lesssim h^{k_g} \Vert \bu \Vert_{H^1(\Gamma)} \Vert \bw_h \Vert_{A_h^L}, 
~~\vert G_f(\bw_h) \vert \lesssim h^{k_g+1} \Vert \bu \Vert_{H^1(\Gamma)} \Vert \bw_h \Vert_{A_h^L}.
\end{equation*}
Finally, for the terms $(5)$, $(6)$ we can apply arguments as in \eqref{eqsh}, resulting in
\begin{equation*}
\vert \tilde{s}_h(\bw_h,\lambda^e) \vert \lesssim  h^{k_g} \Vert \lambda \Vert_{H^1(\Gamma)} \Vert \bw_h \Vert_{A_h^L}, ~~
\vert \tilde{s}_h(\bu^e,\mu_h) \vert \lesssim  h^{k_g} \Vert \bu \Vert_{H^1(\Gamma)} \Vert \mu_h \Vert_{M}.
\end{equation*} 
Combining the bounds for these terms which completes the proof.
\end{proof}

Note that compared to the consistency error bounds for the penalty methods in Lemma~\ref{lemmaconsistencyerror}, for the Lagrange multiplier method we (only) have $h^{k_g-1} \|\bu\|_{H^1(\Gamma)}$ (instead of  $h^{k_g} \|\bu\|_{H^1(\Gamma)}$). The loss of one power in $h$ is caused by the estimate \eqref{K1a}.

\subsection{Discretization error bounds} \label{sectDiscrerror}

We combine the Strang-Lemma~\ref{stranglemma} and the bounds for the approximation error and the consistency error to obtain bounds for the discretization error in the energy norms. We first consider the inconsistent penalty formulation  \eqref{discretepenaltyform1}.

\begin{theorem} \label{energyerrorboundph1}
Let $\bu \in \bV$ and   $\bu_h \in \bV_{h,\Theta}^{k}$ be the solution of  \eqref{contform} and of \eqref{discretepenaltyform1}, respectively.  We  assume that the data error satisfies $\Vert \vert \bB\vert \bbf^e - \bbf_h \Vert_{L^2(\Gamma_h)} \lesssim h^{k_g+1} \Vert \bbf \Vert_{L^2(\Gamma)}$ and $\rho \sim h^{-1}$, $\eta \gtrsim h^{-2}$. Then the following bound holds
\begin{equation} \label{bound1}
\Vert \bu^e -\bu_h \Vert_{A_h^{P_1}} \lesssim (h^{k}   + \eta^{\frac{1}{2}} h^{k+1}) \Vert \bu \Vert_{H^{k+1}(\Gamma)} + (h^{k_g} + \eta^{\frac{1}{2}}h^{k_p} + \eta^{-\frac{1}{2}}) \Vert \bu \Vert_{H^1(\Gamma)}.
\end{equation}
\end{theorem}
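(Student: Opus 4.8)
The plan is to invoke the Strang Lemma for the penalty methods, Theorem~\ref{stranglemma}, specialized to $i=1$, which already bounds the energy-norm error $\Vert \bu^e - \bu_h \Vert_{A_h^{P_1}}$ by twice the best-approximation error $\min_{\bv_h \in \bV_{h,\Theta}^{k}} \Vert \bu^e - \bv_h \Vert_{A_h^{P_1}}$ plus the consistency term $\sup_{\bw_h} \vert A_h^{P_1}(\bu^e, \bw_h) - (\bbf_h, \bw_h)_{L^2(\Gamma_h)} \vert / \Vert \bw_h \Vert_{A_h^{P_1}}$. Well-posedness of \eqref{discretepenaltyform1}, guaranteed by Lemma~\ref{Ahscalarproduct}, makes $\bu_h$ and both quantities on the right-hand side well defined, so it remains only to estimate the two terms and add them.

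For the approximation term I would directly apply the bound \eqref{Eq1} of Lemma~\ref{lemmaapproximationerror} with $i=1$, which gives $\min_{\bv_h} \Vert \bu^e - \bv_h \Vert_{A_h^{P_1}} \lesssim (h^{k} + \eta^{\frac12} h^{k+1}) \Vert \bu \Vert_{H^{k+1}(\Gamma)}$. This reproduces exactly the first group of terms in \eqref{bound1}; the $\eta$-dependence has been made explicit there, and the hypothesis $\rho \lesssim h^{-1}$ is precisely what keeps the stabilization contribution subdominant in that proof.

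For the consistency term I would invoke the bound \eqref{eqconsp1} of Lemma~\ref{lemmaconsistencyerror}, which under the standing hypotheses on the data error, $\Vert \vert \bB \vert \bbf^e - \bbf_h \Vert_{L^2(\Gamma_h)} \lesssim h^{k_g+1} \Vert \bbf \Vert_{L^2(\Gamma)}$, and on the parameters $\rho \sim h^{-1}$, $\eta \gtrsim h^{-2}$, yields $(h^{k_g} + \eta^{\frac12} h^{k_p} + \eta^{-\frac12}) \Vert \bu \Vert_{H^1(\Gamma)}$. This reproduces the second group of terms in \eqref{bound1}. Adding the two bounds and absorbing the factor $2$ into the generic constant hidden in $\lesssim$ then gives \eqref{bound1}.

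The genuine content of the argument lies not in this final assembly but in the two cited lemmas. The hard part — and the place where the restrictions $\rho \sim h^{-1}$, $\eta \gtrsim h^{-2}$ become indispensable — is the consistency estimate \eqref{eqconsp1}, whose proof rests on the discrete Korn-type inequality of Corollary~\ref{corolKorn} and the resulting coercivity statement \eqref{K1}: precisely because $\eta \gtrsim h^{-2}$, the term $h^{-1} \Vert \bw_h \cdot \tilde{\bn}_h \Vert_{L^2(\Gamma_h)}$ occurring in the Korn inequality can be controlled by $\Vert \bw_h \Vert_{A_h^{P_1}}$, so that the geometric, stabilization, penalty and force contributions to the consistency error are each bounded by $\Vert \bw_h \Vert_{A_h^{P_1}}$ times the stated powers of $h$. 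Thus, with those lemmas in hand, the theorem is a one-line consequence of the Strang Lemma.
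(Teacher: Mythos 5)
Your proposal is correct and follows exactly the route the paper intends: the theorem is obtained by inserting the approximation bound \eqref{Eq1} of Lemma~\ref{lemmaapproximationerror} and the consistency bound \eqref{eqconsp1} of Lemma~\ref{lemmaconsistencyerror} into the Strang estimate \eqref{ineqstrang} of Theorem~\ref{stranglemma}, whose hypotheses ($\rho\sim h^{-1}$, $\eta\gtrsim h^{-2}$, and the data-error assumption) match those of the theorem. Your remarks on where the real work lies (the discrete Korn inequality and \eqref{K1} behind the consistency estimate) are also consistent with the paper's presentation.
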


\begin{remark}\label{remdiscussion} \rm We discuss this error bound. 
For \emph{linear} finite elements, i.e. $k =1$,   $k_g = 1$ (linear geometry approximation),  $k_p =2$ (higher order normal approximation in the penalty term) and $\eta \sim h^{-2}$  we obtain an optimal order error bound. However, for \emph{higher order} finite elements, i.e. $k \geq 2$, we are not able to choose the other parameters ($k_g,k_p,\eta$) such that we have an optimal order error bound.  If we balance the terms $\eta^\frac12 h^{k+1}$ and $\eta^{-\frac12}$ this yields $\eta \sim h^{-(k+1)}$. Using this parameter choice and $k_g=k$ (isoparametric case), $k_p=k+2$ (higher order normal approximation in the penalty term), we obtain an (suboptimal) error bound of the order $h^{\frac12 (k+1)}$. This suboptimal result is due to the factor $\eta^{-\frac12}$ in the error bound, which is caused (only) by the estimate for the inconstency term $( E(\bu) , (\bw_h^l \cdot \bn)\bH)_{L^2(\Gamma)}  $ in \eqref{consA}. 
\end{remark}

Next we consider the consistent penalty formulation \eqref{discretepenaltyform2}. 

\begin{theorem} \label{energyerrorboundph2}
Let $\bu \in \bV$ and  $\bu_h \in \bV_{h,\Theta}^{k}$ be the  solution of \eqref{contform} and  of  \eqref{discretepenaltyform2}, respectively. We assume that the data error satisfies $\Vert \vert \bB\vert \bbf^e - \bbf_h \Vert_{L^2(\Gamma_h)} \lesssim h^{k_g+1} \Vert \bbf \Vert_{L^2(\Gamma)}$ and $\rho \sim h^{-1}$, $\eta \gtrsim h^{-2}$. Then the following bound holds:
\begin{equation} \label{bound2}
\Vert \bu^e -\bu_h \Vert_{A_h^{P_2}} \lesssim (h^{k}  + \eta^{\frac{1}{2}} h^{k+1}) \Vert \bu \Vert_{H^{k+1}(\Gamma)} + (h^{k_g} + \eta^{\frac{1}{2}}h^{k_p}) \Vert \bu \Vert_{H^1(\Gamma)}.
\end{equation}
\end{theorem}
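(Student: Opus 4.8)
The plan is to obtain this bound as a direct consequence of the Strang Lemma~\ref{stranglemma} (applied with $i=2$) combined with the approximation and consistency error bounds already established. The Strang Lemma reduces the energy-norm discretization error $\Vert \bu^e - \bu_h \Vert_{A_h^{P_2}}$ to the sum of two quantities: the best-approximation error $\min_{\bv_h \in \bV_{h,\Theta}^{k}} \Vert \bu^e - \bv_h \Vert_{A_h^{P_2}}$ and the consistency error $\sup_{\bw_h} |A_h^{P_2}(\bu^e, \bw_h) - (\bbf_h, \bw_h)_{L^2(\Gamma_h)}| / \Vert \bw_h \Vert_{A_h^{P_2}}$. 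Since $A_h^{P_2}(\cdot,\cdot)$ defines a scalar product on $\bV_{h,\Theta}^{k}$ (Lemma~\ref{Ahscalarproduct}), the hypotheses of Theorem~\ref{stranglemma} are met and no separate stability argument is needed here.

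First I would insert the approximation error bound from Lemma~\ref{lemmaapproximationerror}, namely estimate~\eqref{Eq1} with $i=2$, which yields $\min_{\bv_h} \Vert \bu^e - \bv_h \Vert_{A_h^{P_2}} \lesssim (h^{k} + \eta^{\frac12} h^{k+1}) \Vert \bu \Vert_{H^{k+1}(\Gamma)}$. This is exactly the first group of terms in \eqref{bound2}, so nothing further is required on the approximation side. Next I would insert the consistency bound~\eqref{eqconsp2} from Lemma~\ref{lemmaconsistencyerror}, which supplies $(h^{k_g} + \eta^{\frac12} h^{k_p}) \Vert \bu \Vert_{H^1(\Gamma)}$ for the supremum term. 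Adding the two contributions produces precisely the right-hand side of \eqref{bound2}.

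I would emphasize that the parameter restrictions $\rho \sim h^{-1}$ and $\eta \gtrsim h^{-2}$ are imported solely because they are the hypotheses under which Lemma~\ref{lemmaconsistencyerror} was proved; they guarantee the coercivity estimate~\eqref{K1}, which is what allows the factors $\Vert \bw_h \Vert_{H^1(\Gamma_h)}$ arising from the geometry error $G_{a_T}(\cdot,\cdot)$ to be absorbed into $\Vert \bw_h \Vert_{A_h^{P_2}}$. The proof itself is therefore only an assembly step: the genuine difficulties have already been discharged in the preceding sections, in the geometry-error estimates of Lemma~\ref{lemmageometryapproximationerrors} and, above all, in the discrete Korn inequality of Corollary~\ref{corolKorn} that underpins \eqref{K1}.

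The one point worth checking carefully during the assembly is that the \emph{full} $H^1(\Gamma_h)$-control of $\bw_h$ used to bound $G_{a_T}(\bu^e,\bw_h)$ is indeed available through \eqref{K1}; this is exactly where the consistent form $a_{T,h}$ (with its Weingarten term and the penalty contribution) is essential, so that no negative power of $\eta$ survives. This contrasts with the inconsistent case \eqref{bound1}, whose extra $\eta^{-\frac12}$ term originates from the consistency contribution $(E(\bu),(\bw_h^l\cdot\bn)\bH)_{L^2(\Gamma)}$ treated in \eqref{consA}, a term that is simply absent in the consistent formulation. Hence no real obstacle remains, and the expected outcome is the clean bound~\eqref{bound2}.
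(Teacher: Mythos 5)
Your proposal is correct and coincides with the paper's own (implicit) argument: Theorem~\ref{energyerrorboundph2} is obtained exactly by inserting the approximation bound \eqref{Eq1} and the consistency bound \eqref{eqconsp2} into the Strang Lemma~\ref{stranglemma}, with the parameter restrictions $\rho\sim h^{-1}$, $\eta\gtrsim h^{-2}$ inherited from Lemma~\ref{lemmaconsistencyerror} via \eqref{K1}. Nothing further is needed.
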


\begin{remark} \label{remdisc2} \rm Note that the bound in \eqref{bound2} is the same as in \eqref{bound1}, except for the term $\eta^{-\frac12}$ that occurs in \eqref{bound1} due to the inconsistency of the method \eqref{discretepenaltyform1}. 
In view of the factor  $\eta^{\frac{1}{2}} h^{k+1}$  we take  $\eta \sim h^{-2}$. Based on the consistency error term we take  $k_g = k$ (isoparametric case)  and  $k_p=k+1$ (higher order normal approximation in the penalty term). This then yields an optimal order error bound.
\end{remark}

The same estimates as in \eqref{bound1} and \eqref{bound2} also hold with the energy norm $\|\cdot\|_{A_h^{P_i}}$ replaced by the $H^1(\Gamma_h)$ norm:
\begin{corollary}
Let $\bu \in \bV$,  $\bu_h \in \bV_{h,\Theta}^{k}$, $\tilde{\bu}_h \in \bV_{h,\Theta}^{k}$ be  solution of  \eqref{contform},  \eqref{discretepenaltyform1} and of \eqref{discretepenaltyform2}, respectively. The  following discretization errror bounds hold:
\begin{align*}
\Vert \bu^e -\bu_h \Vert_{H^1(\Gamma_h)} &\lesssim (h^{k} +  \eta^{\frac{1}{2}} h^{k+1}) \Vert \bu \Vert_{H^{k+1}(\Gamma)} + (h^{k_g} + \eta^{\frac{1}{2}}h^{k_p} + \eta^{-\frac{1}{2}}) \Vert \bu \Vert_{H^1(\Gamma)}, \\
\Vert \bu^e -\tilde{\bu}_h \Vert_{H^1(\Gamma_h)} &\lesssim (h^{k} +  \eta^{\frac{1}{2}} h^{k+1}) \Vert \bu \Vert_{H^{k+1}(\Gamma)} + (h^{k_g} + \eta^{\frac{1}{2}}h^{k_p}) \Vert \bu \Vert_{H^1(\Gamma)}.
\end{align*}
\end{corollary}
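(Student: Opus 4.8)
The plan is to transfer the energy-norm bounds \eqref{bound1} and \eqref{bound2}, already established in Theorems~\ref{energyerrorboundph1} and \ref{energyerrorboundph2}, to the $H^1(\Gamma_h)$-norm by means of the discrete Korn inequality \eqref{K1}. The obstruction to applying \eqref{K1} directly to $\bu^e-\bu_h$ is that \eqref{K1} holds only for finite element functions, whereas $\bu^e$ is not in $\bV_{h,\Theta}^{k}$. I would therefore split off the parametric interpolant. Fix $i\in\{1,2\}$, write $\bu_h$ for the solution of the corresponding discrete problem, and set $\bv_h:=I_\Theta^k(\bu^e)\in\bV_{h,\Theta}^{k}$. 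The triangle inequality gives
\[
\Vert \bu^e-\bu_h\Vert_{H^1(\Gamma_h)}\le \Vert \bu^e-\bv_h\Vert_{H^1(\Gamma_h)}+\Vert \bv_h-\bu_h\Vert_{H^1(\Gamma_h)}.
\]

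For the second term, since $\bv_h-\bu_h\in\bV_{h,\Theta}^{k}$, the hypotheses $\rho\sim h^{-1}$, $\eta\gtrsim h^{-2}$ make \eqref{K1} applicable, so $\Vert \bv_h-\bu_h\Vert_{H^1(\Gamma_h)}\lesssim\Vert \bv_h-\bu_h\Vert_{A_h^{P_i}}$. Another triangle inequality in the energy norm yields $\Vert \bv_h-\bu_h\Vert_{A_h^{P_i}}\le\Vert \bu^e-\bv_h\Vert_{A_h^{P_i}}+\Vert \bu^e-\bu_h\Vert_{A_h^{P_i}}$. The first of these is the interpolation term already estimated inside the proof of Lemma~\ref{lemmaapproximationerror}, namely $\lesssim(h^k+\eta^{1/2}h^{k+1})\Vert\bu\Vert_{H^{k+1}(\Gamma)}$, and the second is exactly the energy-norm discretization error bounded by \eqref{bound1} for $i=1$ and by \eqref{bound2} for $i=2$. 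Thus the second term is already within the asserted bounds.

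For the first term I would invoke the approximation result componentwise: by Lemma~\ref{lemmascalarapproximationerror} the interpolation error is of order $h^k$ in the $L^2$- and full-gradient contributions on $\Gamma_h$, and by the norm equivalence Lemma~\ref{lemmanormequivalences} the lift-and-$\bP_h$ seminorm appearing in $\Vert\cdot\Vert_{H^1(\Gamma_h)}$ is equivalent to the corresponding tangential seminorm on $\Gamma$; together these give $\Vert\bu^e-\bv_h\Vert_{H^1(\Gamma_h)}\lesssim h^k\Vert\bu\Vert_{H^{k+1}(\Gamma)}$, which is dominated by the right-hand sides of \eqref{bound1} and \eqref{bound2}. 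Collecting the three contributions, and recalling $k_p\ge k_g$, finishes both estimates simultaneously.

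The genuinely non-routine point---everything else being a reassembly of earlier lemmas---is this last step: because the $H^1(\Gamma_h)$-norm is defined through the lift $\bw^l$ and the projection $\bP_h$ rather than through the discrete tangential gradient entering the energy norm, one must pass through Lemma~\ref{lemmanormequivalences} to see that the interpolation error is controlled in the genuine $H^1(\Gamma_h)$-norm and not merely in the energy seminorm. Once that identification is made, no new geometry-error analysis is required, since all such errors have already been absorbed into the energy-norm bounds.
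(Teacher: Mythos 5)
Your proposal is correct and follows essentially the same route as the paper: split off the parametric interpolant $I_\Theta^k(\bu^e)$, bound the interpolation part in the $H^1(\Gamma_h)$-norm via Lemma~\ref{lemmascalarapproximationerror}, and control the discrete difference by passing to the energy norm through \eqref{K1} and a second triangle inequality, after which Lemma~\ref{lemmaapproximationerror} and Theorems~\ref{energyerrorboundph1}, \ref{energyerrorboundph2} finish the argument. Your extra remark on invoking Lemma~\ref{lemmanormequivalences} for the lifted $H^1(\Gamma_h)$-norm is a slightly more careful justification of a step the paper leaves implicit, but it does not change the structure of the proof.
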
 
\begin{proof}
We show the first bound. The second bound can be shown analogously. Using Lemma \ref{lemmascalarapproximationerror} and inequality \eqref{K1} we get
\begin{align*}
\Vert \bu^e -\bu_h \Vert_{H^1(\Gamma_h)} &\leq \Vert \bu^e - I^{k}_{\Theta} (\bu^e) \Vert_{H^1(\Gamma_h)} + \Vert I^{k}_{\Theta} (\bu^e) - \bu_h \Vert_{H^1(\Gamma_h)} \\
&\lesssim h^k \Vert \bu \Vert_{H^{k+1}(\Gamma)} + \Vert I^{k}_{\Theta} (\bu^e) - \bu_h \Vert_{A_h^{P_1}} . 
\end{align*}
Since 
\begin{align*}
\Vert I^{k}_{\Theta} (\bu^e) - \bu_h \Vert_{A_h^{P_1}} \leq \Vert I^{k}_{\Theta} (\bu^e) - \bu^e \Vert_{A_h^{P_1}} + \Vert \bu^e - \bu_h \Vert_{A_h^{P_1}}
\end{align*}
we get the desired result using Lemma \ref{lemmaapproximationerror} and Theorem \ref{energyerrorboundph2}
\end{proof}

Finally we consider the Lagrange multiplier formulation. 

\begin{theorem} \label{energyerrorboundlh}
Let $(\bu, \lambda) \in \bV_* \times L^2(\Gamma)$ and  $(\bu_h, \lambda_h) \in V_{h,\Theta}^{k} \times V_{h,\Theta}^{k_l}$ be the solution of \eqref{contformlagrange} and of  \eqref{discretelagrangeform}, respectively. We   assume that the data error satisfies $\Vert \vert \bB\vert \bbf^e - \bbf_h \Vert_{L^2(\Gamma_h)} \lesssim h^{k_g+1} \Vert \bbf \Vert_{L^2(\Gamma)}$ and Assumption \ref{assumptionrhoL} holds. Then we obtain the following error bound:
\begin{equation*} \begin{split}
&\Vert \bu^e - \bu_{h} \Vert_{A_h^L} + \Vert \lambda^e - \lambda_{h} \Vert_{M} \\
&\lesssim  h^{k}  \Vert \bu \Vert_{H^{k+1}(\Gamma)} + (h^{k_l+1} +\rho^{\frac{1}{2}} h^{k_l+\frac{1}{2}}) \Vert \lambda \Vert_{H^{k_l+1}(\Gamma)} 
+ h^{k_g-1}  \Vert \bu \Vert_{H^1(\Gamma)} + h^{k_g}  \Vert \lambda \Vert_{H^1(\Gamma)}
\end{split}
\end{equation*}
\end{theorem}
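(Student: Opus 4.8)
The plan is to obtain the bound directly from the Strang Lemma for the Lagrange multiplier method, Theorem~\ref{stranglemmalagrange}, by inserting the approximation error estimate \eqref{Eq2} of Lemma~\ref{lemmaapproximationerror} and the consistency error estimate of Lemma~\ref{lemmaconsistencyerrorlagrange}. The genuinely technical work has already been done in those two results; what remains here is the routine assembly step.

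First I would check that the hypotheses of the Strang Lemma are in force. Under Assumption~\ref{assumptionrhoL}, Corollary~\ref{corollarydiscreteinfsup} provides a discrete inf-sup constant for $b_h(\cdot,\cdot)$ on $\bV_{h,\Theta}^{k}\times V_{h,\Theta}^{k_l}$ that is uniform in $h$ and in the position of $\Gamma$; together with the fact that $A_h^L(\cdot,\cdot)$ is a scalar product on $\bV_{h,\Theta}^{k}$ (Lemma~\ref{Ahscalarproduct}) this yields the inf-sup property \eqref{eqinfsupAh} of $\mathcal{A}_h(\cdot,\cdot)$. Hence \eqref{discretelagrangeform} is well posed and Theorem~\ref{stranglemmalagrange} applies with $\bg:=\bbf$. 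Recall that the corresponding continuous solution has $\bu=\bu_T^\ast\in\bV_T$, so $\bu\cdot\bn=0$ and the normal extension $\bu^e$ is well defined. I also use the elliptic regularity $\bu\in H^{k+1}(\Gamma)^3$, $\lambda\in H^{k_l+1}(\Gamma)$ that makes the approximation estimates below meaningful.

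Next I would bound the two terms on the right-hand side of Theorem~\ref{stranglemmalagrange}. For the approximation term I apply \eqref{Eq2} of Lemma~\ref{lemmaapproximationerror}, giving
\[
\min_{(\bv_h,\mu_h)}\bigl(\Vert \bu^e-\bv_h\Vert_{A_h^L}+\Vert \lambda^e-\mu_h\Vert_M\bigr)\lesssim h^{k}\Vert \bu\Vert_{H^{k+1}(\Gamma)}+(h^{k_l+1}+\rho^{\frac12}h^{k_l+\frac12})\Vert\lambda\Vert_{H^{k_l+1}(\Gamma)}.
\]
For the consistency term I note that the dummy variable $\xi_h$ in the supremum of Theorem~\ref{stranglemmalagrange} plays exactly the role of $\mu_h$ in Lemma~\ref{lemmaconsistencyerrorlagrange}, so the two quotients coincide and the lemma bounds this term by $h^{k_g-1}\Vert\bu\Vert_{H^1(\Gamma)}+h^{k_g}\Vert\lambda\Vert_{H^1(\Gamma)}$. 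Adding the two contributions produces the asserted estimate.

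The only point worth flagging is the origin of the factor $h^{k_g-1}$ (rather than the $h^{k_g}$ obtained for the penalty methods) in the consistency contribution: it is inherited verbatim from Lemma~\ref{lemmaconsistencyerrorlagrange} and traces back to the weaker inequality \eqref{K1a}, $\|\bv_h\|_{H^1(\Gamma_h)}^2\lesssim h^{-2}A_h^L(\bv_h,\bv_h)$, which for the Lagrange method must replace the sharper discrete Korn-type control \eqref{K1} available in the penalty case. Thus there is no real obstacle in the present proof; the analytical difficulty has already been absorbed into the consistency lemma, and this theorem is simply the combination of the Strang Lemma with the approximation and consistency bounds.
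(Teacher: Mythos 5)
Your proposal is correct and follows exactly the route the paper intends: Theorem~\ref{energyerrorboundlh} is obtained by combining the Strang Lemma (Theorem~\ref{stranglemmalagrange}) with the approximation bound \eqref{Eq2} of Lemma~\ref{lemmaapproximationerror} and the consistency bound of Lemma~\ref{lemmaconsistencyerrorlagrange}, with well-posedness secured by Assumption~\ref{assumptionrhoL} via Corollary~\ref{corollarydiscreteinfsup}. Your remark tracing the $h^{k_g-1}$ factor back to \eqref{K1a} also matches the paper's own discussion following Lemma~\ref{lemmaconsistencyerrorlagrange}.
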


\begin{remark} \label{Remdisc2} \rm 
In the case of isoparametric finite elements, i.e. $k = k_g$, we do \emph{not} get an optimal order error bound. For the case of superparametric finite elements, i.e. $k_g = k+1$, we  distinguish   two cases. First, for $k_l = k$ (same degree finite elements for the Lagrange multiplier as for the primal variable) we can take any  $\rho=c_\alpha h^{1-\alpha}$, $\alpha \in [0,2]$, and $c_\alpha$ as in Corollary \ref{corollarydiscreteinfsup}.  For $k_l= k-1$ ($k \geq 2$) we  restrict to $\rho = c_\alpha h$ with $c_\alpha$ as in Corollary \ref{corollarydiscreteinfsup}. In both cases we then obtain an optimal order error  bound.
\end{remark}

\section{Numerical experiments} \label{sectExperiments}

In this section we present results of a few numerical experiments. We implemented the different methods in Netgen/NGSolve with ngsxfem \cite{ngsolve,ngsxfem}. 

For $\Gamma$ we take the unit sphere which is characterized by the zero level of the distance function function $\phi(x) = \sqrt{x_1^2+x_2^2+x_3^2} - 1$, $x= (x_1,x_2,x_3)^T$.
The surface is embedded in the domain $\Omega = [-1.5,1.5]^3$. We start with an unstructured tetrahedral Netgen-mesh with $h_{max} = 0.5$ (see \cite{Schoeberl1997}) and locally refine the mesh  using a marked-edge bisection method (refinement of tetrahedra that are intersected by the surface). We consider the vector Laplace problem \eqref{contform} with the prescribed solution
\begin{equation*}
\bu^*(x) = \bP(x)\left(-\frac{x_3^2}{x_1^2+x_2^2+x_3^2}, \frac{x_2}{\sqrt{x_1^2+x_2^2+x_3^2}}, \frac{x_1}{\sqrt{x_1^2+x_2^2+x_3^2}}\right)^T.
\end{equation*}
The solution is tangential, i.e. $\bP\bu^* = \bu^*$, and constant in normal direction, i.e. $\bu^* = (\bu^*)^e$. The right-hand side $\bbf$ is computed according to equation \eqref{eqstrong}.

We first consider the penalty formulations \eqref{discretepenaltyform1} and \eqref{discretepenaltyform2}.  The normal approximation $\tilde{\bn}_h$ used in the penalty term is computed as follows. We interpolate the exact level set function $\phi$ in the finite element space $V_{h,\Theta}^{k_p}$, which we denote by $\tilde{\phi}_h$, and then set $\tilde{\bn}_h := \frac{\nabla \tilde{\phi}_h}{\Vert \nabla \tilde{\phi}_h \Vert_2}$. For the approximation of the Weingarten mapping (needed only in \eqref{discretepenaltyform2}) we take $\bH_h = \nabla(I_{\Theta}^{k_g}(\bn_h))$. The resulting linear systems are solved using a direct solver.

We start with problem \eqref{discretepenaltyform1}. In Figure \ref{fig:P1h} the error measured in the $\Vert \cdot \Vert_{A_h^{P_1}}$-norm is shown, for different choices of parameters and refinement levels. 
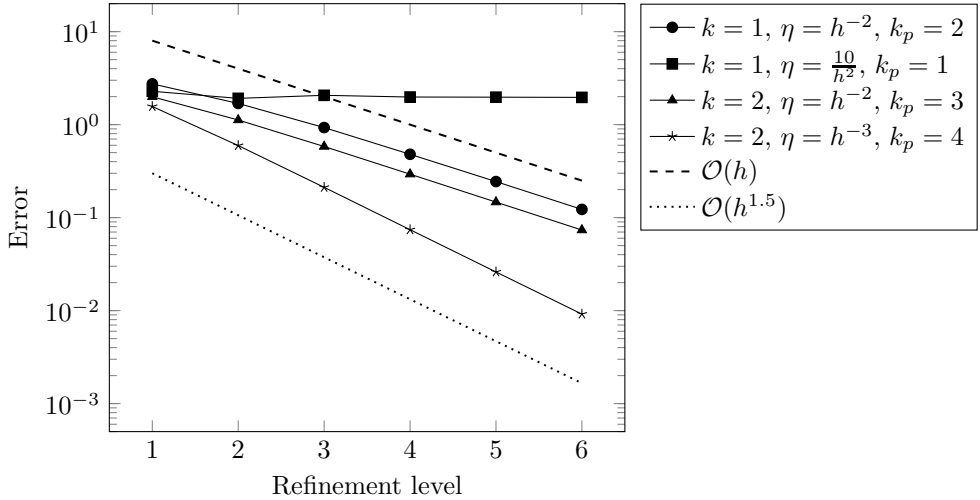
\begin{figure}[ht!]
  \begin{tikzpicture}
  \def\vara{8.0}
  \def\varb{0.3}
  \begin{semilogyaxis}[ xlabel={Refinement level}, ylabel={Error}, ymin=5E-4, ymax=20, legend style={ cells={anchor=west}, legend pos=outer north east}, cycle list name=mark list ]
    \addplot table[x=level, y=uerror1] {P1h.dat};
    \addplot table[x=level, y=uerror2] {P1h.dat};
    \addplot table[x=level, y=uerror3] {P1h.dat};
    \addplot table[x=level, y=uerror4] {P1h.dat};
    \addplot[dashed,line width=0.75pt] coordinates { 
    (1,\vara) (2,\vara*0.5) (3,\vara*0.25) (4,\vara*0.125) (5,\vara*0.0625)(6,\vara*0.03125)
    };
    \addplot[dotted,line width=0.75pt] coordinates { 
      (1,\varb) (2,\varb*0.5*0.707106781) (3,\varb*0.25*0.5) (4,\varb*0.125*0.353553391) (5,\varb*0.0625*0.25)(6,\varb*0.03125*0.176776695)
    };
    \legend{$k=1\text{, }\eta = h^{-2}\text{, }k_p=2$, $k=1\text{, }\eta = \frac{10}{h^{2}}\text{, }k_p=1$,$k=2\text{, }\eta = h^{-2}\text{, }k_p=3$, $k=2\text{, }\eta = h^{-3}\text{, }k_p=4$, $\mathcal{O}(h)$, $\mathcal{O}(h^{1.5})$}
  \end{semilogyaxis}
  \end{tikzpicture}
  \caption{$\Vert \cdot \Vert_{A_h^{P_1}}$-error for problem \eqref{discretepenaltyform1} with $k_g = k$ and $\rho = h^{-1}$.}
  \label{fig:P1h}
\end{figure}

For isoparametric linear finite elements ($k=k_g=1$) and a one order higher normal approximation for the penalty term ($k_p=2$) we observe optimal $\mathcal{O}(h)$-convergence. Choosing the same order for the normal approximation ($k_p=1$)  we do not have convergence. 
In the experiment with $k=k_g=k_p=1$ we used $\eta = 10 \cdot h^{-2}$ (instead of $\eta =  h^{-2}$) to have a bigger constant in the term $\eta^\frac12 h^{k_p}$ in \eqref{bound1},  in order to see the loss of one order more clearly. For the case $k=2$ we do not observe optimal (second order) convergence in Figure \ref{fig:P1h}. For $k=2$, $\eta=h^{-2}$, $k_p=3$ we obtain (only) first order convergence, whereas for $k=2$, $\eta=h^{-3}$, $k_p=4$ the error behaves as $\sim h^{1.5}$. 
 All these results are in agreement with the bounds in Theorem~\ref{energyerrorboundph1}, cf. Remark~\ref{remdiscussion}.

Next we consider problem \eqref{discretepenaltyform2}. In Figure \ref{fig:P2h} we show the discretization error measured in the $\Vert \cdot \Vert_{A_h^{P_2}}$-norm for different choices of parameters and refinement levels. For isoparametric finite elements ($k = k_g$) and a one order higher normal approximation for the penalty term ($k_p = k + 1$) we observe optimal $\mathcal{O}(h^{k})$-convergence for $k=1,\ldots, 3$. For isoparametric quadratic finite elements ($k=k_g=2$) and a normal approximation of order two in the penalty term ($k_p=2$) we observe a loss of one order, i.e. $\mathcal{O}(h)$-convergence.  All these results are in agreement with the bounds in Theorem~\ref{energyerrorboundph2}, cf. Remark~\ref{remdisc2}.

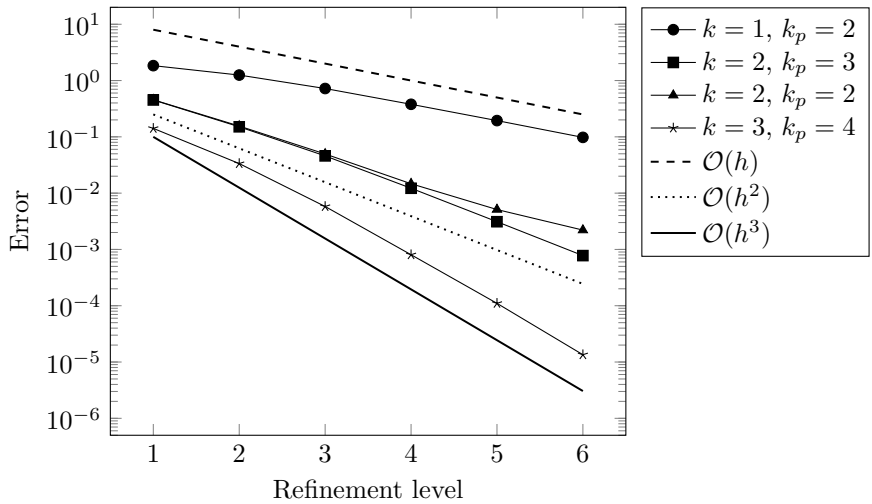
\begin{figure}[ht!]
  \begin{tikzpicture}
  \def\vara{8.0}
  \def\varb{0.25}
  \def\varc{0.1}
  \begin{semilogyaxis}[ xlabel={Refinement level}, ylabel={Error}, ymin=5E-7, ymax=20, legend style={ cells={anchor=west}, legend pos=outer north east}, cycle list name=mark list ]
    \addplot table[x=level, y=uerror1] {P2h.dat};
    \addplot table[x=level, y=uerror2] {P2h.dat};
    \addplot table[x=level, y=uerror3] {P2h.dat};
    \addplot table[x=level, y=uerror4] {P2h.dat};
    \addplot[dashed,line width=0.75pt] coordinates { 
    (1,\vara) (2,\vara*0.5) (3,\vara*0.25) (4,\vara*0.125) (5,\vara*0.0625)(6,\vara*0.03125)
    };
    \addplot[dotted,line width=0.75pt] coordinates { 
      (1,\varb) (2,\varb*0.5*0.5) (3,\varb*0.25*0.25) (4,\varb*0.125*0.125) (5,\varb*0.0625*0.0625)(6,\varb*0.03125*0.03125)
    };
    \addplot[solid,line width=0.75pt] coordinates { 
      (1,\varc) (2,\varc*0.5*0.5*0.5) (3,\varc*0.25*0.25*0.25) (4,\varc*0.125*0.125*0.125) (5,\varc*0.0625*0.0625*0.0625)(6,\varc*0.03125*0.03125*0.03125)
    };
    \legend{$k=1\text{, }k_p=2$, $k=2\text{, }k_p=3$, $k=2\text{, }k_p=2$, $k=3\text{, }k_p=4$, $\mathcal{O}(h)$, $\mathcal{O}(h^{2})$, $\mathcal{O}(h^{3})$}
  \end{semilogyaxis}
  \end{tikzpicture}
  \caption{$\Vert \cdot \Vert_{A_h^{P_2}}$-error for problem \eqref{discretepenaltyform2} with $k_g = k$, $\rho = h^{-1}$ and $\eta = h^{-2}$.}
  \label{fig:P2h}
\end{figure}

Finally we present results for problem \eqref{discretelagrangeform}. The exact Lagrange multiplier $\lambda$ is computed according to equation \eqref{charlambda}. We use a preconditioned MINRES solver with a block diagonal preconditioner as introduced in \cite{grossvectorlaplace} to solve the linear systems. In Figure \ref{fig:Lh} we present the error $\Vert \bu^* - \bu_h \Vert_{A_h^{L}}$ and in one case the error $\Vert \lambda - \lambda_h \Vert_{M}$, which is labeled with an $M$, for different choices of parameters and refinement levels (note that the two curves for $k_l=1$ are almost indistinguishable). 

\begin{figure}[ht!]
  \begin{tikzpicture}
  \def\vara{8.0}
  \def\varb{0.25}
  \def\varc{0.1}
  \begin{semilogyaxis}[ xlabel={Refinement level}, ylabel={Error}, ymin=5E-5, ymax=20, legend style={ cells={anchor=west}, legend pos=outer north east}, cycle list name=mark list ]
    \addplot table[x=level, y=uerror1] {Lh.dat};
    \addplot table[x=level, y=uerror2] {Lh.dat};
    \addplot table[x=level, y=merr2] {Lh.dat};
    \addplot table[x=level, y=uerror3] {Lh.dat};
    \addplot table[x=level, y=uerror4] {Lh.dat};
    \addplot[dashed,line width=0.75pt] coordinates { 
    (1,\vara) (2,\vara*0.5) (3,\vara*0.25) (4,\vara*0.125) (5,\vara*0.0625)(6,\vara*0.03125)
    };
    \addplot[dotted,line width=0.75pt] coordinates { 
      (1,\varb) (2,\varb*0.5*0.5) (3,\varb*0.25*0.25) (4,\varb*0.125*0.125) (5,\varb*0.0625*0.0625)(6,\varb*0.03125*0.03125)
    };
    \legend{$k=k_l=1\text{, }k_g=2$, $k=k_l=k_g=2$, $k=k_l=k_g=2\text{, }M$, $k=k_l=2\text{, }k_g=3$, $k= 2\text{, }k_l=1\text{, }k_g=3$, $\mathcal{O}(h)$, $\mathcal{O}(h^{2})$, $\mathcal{O}(h^{3})$}
  \end{semilogyaxis}
  \end{tikzpicture}
  \caption{$\Vert \cdot \Vert_{A_h^{L}}$-error and $\Vert \cdot \Vert_{M}$-error for problem \eqref{discretelagrangeform}.}
  \label{fig:Lh}
\end{figure}

We take $\rho = h^{-1}$ for superparametric finite elements ($k_g = k +1$) and $\rho = h$ for isoparametric finite elements ($k_g = k$).  For superparametric finite elements ($k_g=k + 1$) with $k_l=k$ we observe optimal $\mathcal{O}(h^k)$-convergence. For these cases the error $\Vert \lambda - \lambda_h \Vert_{M}$ has the same convergence order (not shown). However, \emph{iso}parametric quadratic finite elements ($k=k_g=2$) with $k_l=k$ results in optimal $\mathcal{O}(h^2)$-convergence for $\Vert \bu^*-\bu_h \Vert_{A_h^{L}}$ but \emph{sub}optimal $\mathcal{O}(h)$-convergence for $\Vert \lambda - \lambda_h \Vert_{M}$ (shown with label $M$ in the figure). This shows that the power $k_g-1$ in the term $h^{k_g-1}$ in Theorem \ref{energyerrorboundlh} is sharp and   superparametric finite elements ($k_g=k + 1$) are necessary to obtain an optimal order of convergence (for both primal variable and Lagrange multiplier). Taking $\rho = h^{-1}$ in this case  results in better than $\mathcal{O}(h)$-convergence but 
clearly  less than $\mathcal{O}(h^2)$-convergence (not shown).
For superparametric quadratic finite elements ($k=2$, $k_g=3$) with $k_l=1$ we observe (only) $\mathcal{O}(h)$-convergence. All these results are  in agreement with Theorem~\ref{energyerrorboundlh}, cf. Remark~\ref{Remdisc2}.

 A  drawback of the Lagrange multiplier method compared with the two penalty methods is the fact that (in our experience) the resulting saddle point system is (much) more difficult to solve.
The condition number of this matrix is typically very large, in particular for the case $\rho = h$. 

We did not derive $L^2$-error bounds and therefore do not present numerical results for $L^2$-errors. We note, however,  that for the cases of optimal $\mathcal{O}(h^k)$-convergence  in the energy norms we also have $\mathcal{O}(h^{k+1})$-convergence in the $L^2$-norm. In case of the Lagrange multiplier method \eqref{discretelagrangeform} we observe this  optimal $L^2$-norm convergence only for tangential error component, i.e. $\Vert \bP_h(\bu^* - \bu_h) \Vert_{L^2(\Gamma_h)}$. An analysis of $L^2$-norm convergence is left for future research.

\begin{remark} \label{rembound}
 \rm For the problem considered in this section we performed an experiment to see whether the $h^{-2}$ factor in the estimate \eqref{K1a} is sharp. We numerically computed
\[
c_h:=\min_{\bv_h \in \bV_{h,\Theta}^k}\frac{A_h^{L}(\bv_h, \bv_h)}{\|\bv_h \|_{H^1(\Gamma_h)}^2}
\]
for the parameter values $\rho =h $, $k=1$ and $k_g = 2$ as well as $k_g =1$. The results clearly indicate a $c_h \sim h^2$ behavior. 
\end{remark}

\bibliographystyle{siam}
\bibliography{literatur}{}

\end{document}